\newtheorem{defn}{Definition}
\newtheorem{thm}{Theorem}
\newtheorem{prop}{Proposition}
\newtheorem{lem}{Lemma}
\newtheorem{cor}{Corollary}
\newtheorem*{thm*}{Theorem}
\theoremstyle{remark}
\newtheorem{rem}{\textsc{Remark}}
\newtheorem{example}{\textsc{Example}}
 \def\Rr{{\mathbb{R}}}
 \newcommand{\g}{\mathfrak{g}} 
  \newcommand{\m}{\mathfrak{m}} 
 \newcommand{\Mm}{\mathcal{M}}
 \newcommand{\Pa}{\mathfrak{P}}
 \newcommand{\Bp}{B\mathfrak{P}}
  \newcommand{\Rp}{R\mathfrak{P}}
 \newcommand{\GS}{\mathcal{GS}}
  \newcommand{\GG}{\mathcal{GG}}
 \newcommand{\F}{\mathcal{F}}
\def\C{\mathbb{C}}
\def\Pc{\mathbb{CP}}
\def\M{\overline{\mathcal{M}}}
\begin{document}
\title{\v Cech cover of the complement of the discriminant variety.\\ Part II: Deformations  of Gauss-skizze}
\author{N.C. Combe}
\address{Max Planck Institute for Mathematics in the Science\\ Inselstrasse 22 \\ 04 103 Leipzig}
\email{combe@mis-mpg.de}
\keywords{Configuration space, discriminant variety, Frobenius manifolds, Operads, Deformations}
\subjclass{Primary: 14N20, 14E20, 18Mxx; Secondary: 14B12}
\thanks{I would like to express my gratitude towards the Max Planck society, for supporting my research through the Minerva grant. I would like to thank the Max Planck Institute for Mathematics in Bonn and in Leipzig for support, during my stay in both institutes. I wish to thank Yuri Manin for beautiful and helpful remarks and as well for his enthusiasm and interest in my work. Many thanks to Bruno Vallette for many helpful comments and advice. I would like to thank Joao Nuno Mestre for informing me about an interesting article concerning deformation theory. Finally, many thanks to the anonymous referee for suggestions and comments, which stimulated progress.}
\maketitle
\begin{abstract}
The configuration space of $n$ marked points on the complex plane is considered. We investigate a decomposition of this space by so-called Gauss-skizze i.e. a class of graphs being forests, introduced by Gauss.
It is proved that this decomposition is a semi-algebraic topological stratification. It also forms a cell decomposition of the configuration space of $n$ marked points.  
 Moreover, we prove that classical tools from deformation theory, ruled by a Maurer--Cartan equation, can be used only locally for Gauss-skizze. We prove that the deformation of the Gauss-skizze is governed by a Hamilton--Jacobi differential equation. This gives developments concerning Saito's Frobenius manifold. 
Finally, a Gauss-skizze operad is introduced. It is an enriched Fulton--MacPherson operad, topologically equivalent to the little 2-disc operad. 
\end{abstract}

\setcounter{tocdepth}{1}
\tableofcontents

\


\section{Introduction}
The consideration of so-called configuration spaces of points on the complex (or real) line, emerged from the interest of physicists in Conformal Field theory. 
A configuration space $ \textup{Conf}_n(\C)$ of $n$ marked points on the complex plane is:
\[\textup{Conf}_n(\C) =\{(x_1,...,x_n)\in \C^{n}| x_i\neq x_j\}.\]

\smallskip 

This complex space can be identified with the space of degree $n$ complex monic polynomials in one variable and with distinct roots. In this paper, we decompose it using so-called {\it Gauss-skizze}, i.e. a set of curves, properly embedded in the complex plane which are the inverse images of the real and imaginary axis under a complex polynomial in degree $n$, in one variable. 

\smallskip 

The Gauss-skizze object was first introduced by C. F. Gauss in~\cite{Ga73}, as a tool to prove the fundamental theorem of algebra. Recently in 2007, it was rediscovered by J. L. Martin, D. Savitt, and T. Singer in~ \cite{SaSi07}, a few years later (in 2011) by F. Bergeron ~\cite{Be11}. More recently, in 2016, E. Ghys in his "singular mathematical promenade" reinvestigated this tool~\cite{{Gh16},{Gh17}}. Finally, in~\cite{Ac17} N. A'Campo suggested to use the Gauss-skizze in a different flavour. In~\cite{Be11} and later in~\cite{Ac17}, it was conjectured in both papers that the Gauss-skizze decomposition forms a cell decomposition of the space of complex degree $n$ polynomials, with distinct roots such as depicted above. 

\smallskip 

This paper, presents the Gauss-skizze in a categorical language, which is completely different from the approach of~\cite{Be11} and~\cite{Ac17}. 
There exist important similarities between this {\it Gauss-Skizze} and the Grothendieck {\it dessins d'enfant}~\cite{Gro84}. A detailed study concerning the {\it Gauss-Skizze} can be found in the author's former works~\cite{{Co18a},{Co18b}} and have been a source of inspiration for the creation of {\it Dessins de vieillard} in~\cite{CMM20}.

\smallskip 

In this article, first of all, we prove that the decomposition in Gauss-skizze of the compactified space $\overline{\textup{Conf}_n(\C)}$ forms a topological stratification. Moreover, we prove that this topological stratification is a cell decomposition. We use the compactification introduced by Fulton--MacPherson~\cite{FuMa94}. This step prepares the ground for deeper investigations, concerning perverse sheafs in the flavour of Goresky--MacPherson's approach in~\cite{{GoMa81a},{GoMa81b}} and \cite{GoMa83}. 

\smallskip

Secondly, we study algebraic-geometric properties of this stratification, namely small deformations of the Gauss-skizze, in the sense of Grothendieck--Mumford--Schlessinger~ \cite{{Gro95},{Sch68}}, using the approach developed in \cite{Ia10} and~\cite{IaMa10}. Classical deformation theory (for Algebraic geometry) is based on the works of Kodaira--Spencer~\cite{KoS58}, and Kuranishi \cite{Ku68} on small deformations of complex manifolds. It was formalised by Grothendieck~\cite{Gro95} in a functorial language, along with Schlessinger~\cite{Sch68} and Artin~\cite{Ar76}.

\smallskip 

 The idea is that, with infinitesimal deformations of a geometric object, we can associate a deformation functor of Artin rings $F: {\bf Art} \to {\bf Set}$. The modern approach to the study of deformation functors, associated with geometric objects, consists in using a differential graded Lie algebra (DGLA) or, in general, using $L_{\infty}$-algebras. Once we have a differential graded Lie algebra ${\bf L}$, we can define the associated deformation functor $Def_{\bf L} : {\bf Art} \to {\bf Set}$, using the solutions of the Maurer--Cartan equation, up to gauge equivalence.

\smallskip

In this article, it is shown that in the Gauss-skizze context, Maurer--Cartan equations rule only {\it local} deformations. We remedy to this situation, by using a totally different approach, known as the level set method~\cite{OsPa03}. Applying the level set method to Gauss-skizze produces a non local deformation.

\smallskip

In particular, we prove that a deformation of the Gauss-skizze is governed by a (linear) differential equation of Hamilton--Jacobi type. 
Roughly speaking, the original question behind the level set method is as follows. Given an $n$-dimensional euclidean space, and an interface of codimension 1 lying within this space, and bounding a multiply connected open region $\Omega$, the problem is to analyze and compute its subsequent motion under some vector field ${\bf v}$, depending on many parameters.

\smallskip

Thirdly, we introduce a topological Gauss-skizze operad, named {\bf GaSOp}. The {\bf GaSOp} operad is homotopically equivalent to the little disc operad~\cite{BoVo68}. The advantage of {\bf GaSOp}, is that it produces a refined version of the Fulton--MacPherson operad~\cite{FuMa94}. As a corollary, we have that it defines a refined semi-algebraic stratification of the Fulton--MacPherson decomposition. Considering this configuration space as a space of polynomials, our decomposition gives also information concerning critical values and critical points of polynomials of high degree.
\smallskip

 In fact, this proposal contributes to giving a considerable amount of details about the Frobenius manifold, introduced by K. Saito in the context of unfolding isolated singularities, \cite{{Ma99},{Sa82}}.

 \smallskip 
 
 The Gauss-skizze gives a cell decomposition of this Frobenius manifold. We show that a differential equation of Hamilton--Jacobi type governs extremal paths on this Frobenius manifold.

\smallskip

The stepping stone towards the construction of this new {\bf GaSOp} is to use the categorical language introduced in~\cite{{Go},{GoW},{LaVo12},{Sin1},{Sin2}} for configuration spaces of marked points in the Euclidean space.

\smallskip

 This approach establishes a more explicit bridge between the geometric results found for the real locus of the moduli space $\M_{0,n}(\Rr)$~\cite{{Dev04},{DaJaSco},{EHKR},{Ka1}} and in the complex version $\M_{0,n}(\C)$ (see for example~\cite{{Kee83},{Kee92},{DM69},{FuMa94}}).

\smallskip

The plan of this paper is as follows. Section 1, we introduce graphs from a categorical point of view. Namely, we introduce conceptual graphs, and use the Borisov--Manin formalism to define ghost morphisms. We recall facts concerning cosimplicial and semicosimplicial models and the relation to the Fulton--MacPherson operad. 
\smallskip 

In sections 2 and 3, we introduce the Gauss-skizze stratification. Namely, the Gauss-skizze and their properties. We show that the set of (conceptual) $n$-Gauss-graphs forms a poset.

\smallskip
Section 4, contains the proof that the Gauss-skizze decomposition form a semi-algebraic stratification. We  present this decomposition in relation to the Frobenius manifolds, and prove that we have a cell decomposition.  
\smallskip

Section 5, we study the algebraic-geometric properties of the Gauss-skizze stratification and their deformations. We study the local deformations using classical tools from deformation theory. 

For a non-local deformation we proceed as follows. Consider an open region $\Omega \in \Rr^2$. We prove that the deformations of Gauss-skizze are governed by a differential equation of type: 
\[\frac{\partial\Phi({\bf x},t)}{\partial t}+{\bf v}\nabla\Phi({\bf x},t)=0,\]
where {\bf v} is a vector field, and the level set function $\Phi({\bf x},t)$ is a real function in $\Rr^2\times \Rr^+$ having the following properties:
\begin{itemize}
	\item $\Phi({\bf x},t)<0\quad$ for $x\in \Omega$,
	\item $\Phi({\bf x},t)>0\quad$ for $x\in \overline{\Omega}^c$,
	\item $\Phi({\bf x},t)=0\quad$ for $x\in \partial\Omega$.
\end{itemize}

The method used here is borrowed from the level set method. 

\smallskip 
Section 6, we study the properties of the Gauss-skizze in a categorical framework and prove that it is a topological stratification. Finally, we construct the Gauss-skizze operad, using the cosimplicial setting defined in section 2. We prove that it gives a refined semi-algebraic decomposition, compared to the classical Fulton--MacPherson decomposition.

\medskip

\section{Conceptual graphs, (semi)cosimplical spaces and Fulton--MacPherson operad}
Let {\bf Set} be the category of sets (in a finite universe). Objects of a category ${\bf C}$ are denoted Ob(${\bf C}$). 
\subsection{Category of conceptual graphs}
We work in the category of conceptual graphs {\bf CoGr}. The objects in this category and their morphisms are defined below. 
\begin{defn}[Conceptual graphs]
A conceptual graph $G$ consists of the following quadruple \[G=(E(G),\, V(G);\, \partial_G:E(G)\to V(G)\times (G),\,i_G:V_G\hookrightarrow E(G)),\]
where $E(G)$ is the set of edges of $G$, $V(G)$ is the set of vertices of $G$, $V(G)\times V(G)$ is the set of unordered pairs of vertices of $G$, $\partial_G$ is the incidence map from the set of edges to the unordered pairs of vertices, $i_G$ is the inclusion map of the vertex set into the edge set. The interior of the edges is the set given by $E(G)\setminus i_G(V(G))$.
\end{defn}
Whenever it is clear which graph is considered, we write $E$ and $V$ instead of $E(G)$ resp. $V(G)$.

\smallskip

We comment on this definition, before introducing the morphisms between graphs. 
This definition will be slightly modified in the flavour of the terminology of~\cite{Ma99}, whenever needed. The missing ingredient in the above definition is the notion of flags $Fl$. This finite set is deeply related to the set of vertices $V$ throughout the map $\delta:Fl\to V$ and the involution $j:Fl\to Fl$ where $j^2=id$ defines edges: two element orbits of $j$ form the set of edges, whereas one-element orbits form the set of tails. 

\smallskip 

For the reader who is unfamiliar with this terminology, it is convenient to think of those graphs in terms of their geometric realisations. For each vertex $v\in V$, put $Fl=\delta^{-1}(v)$ and consider the topological space star of $v$ consisting of semi-intervals having one common boundary point. Take the union of all stars and replace every two-element orbit of $j$ by a segment joining the respective vertices so that these two flags become half-edges of the edge. 
A graph is connected whenever its geometric realisation is also. 

\smallskip

The morphisms between those graphs are as follows. 

\medskip

\begin{defn}
A graph (homo)morphism $f:G\to H$ of conceptual graphs $G$ to $H$ is given by a morphism $f_E:E(G)\to E(H)$ such that the restriction $f_V=f_{P|_{V(G)}}$ to $V$ preserves incidence relations. In particular, $\partial_H(f_E(e))$ is formed by the pair of vertices $(f_V(x),f_V(y))$, whenever $\partial_G(e)$ is given by $(x,y)\in V(G)\times V(G) $, for $e\in E(G)$. In other words, we have $f_E\,\circ i_G=i_H\, \circ f_V$. 
\end{defn}

\medskip

Note that the morphisms here are not strict i.e. edges of a graph $G$ are not necessarily mapped to an edge of the graph $H$. In particular, this allows to naturally map $G$ to a graph obtained by contraction of edges. 

\medskip

\begin{defn}
A finite sequence of vertices $v_0,...,v_n$, $n\in \mathbb{N}^{*}$ such that there exists $e_i\in E(G)$ verifying $\partial_G(e_i)=(v_{i-1},v_{i})$ is called a path in $G$ .\end{defn}

A variant of this category of conceptual graphs {\bf CoGr} is the category of {\it decorated} conceptual graphs {\bf DCoGr}.

\medskip

\begin{defn}
We call a graph $G$ decorated if there exists a map from the set $E(G)$ to a finite set (of colors and/or orientations).
\end{defn}

\medskip

\subsection*{Category of trees}
We restrict our attention to the subcategory {\bf DTrCoGr} of {\bf DCoGr}, where the objects (graphs) verify the following properties:

\smallskip 

\begin{enumerate}
\item There exists at most one edge for a pair of vertices.
\item Graphs are loopless i.e.: for any vertex $v\in V(G)$ there exists no edge $e$ such that $\partial_G(e)$ is given by the pair $(v,v)\in V(G)\times V(G)$. In other words, we restrict our attention to those graphs having an injective incidence map $i_G$.
\item Graphs are acyclic i.e. for any $n\in \mathbb{N}^{*}$ and path on a finite set of vertices $v_0,...,v_n$, there exists no edge $e_n\in E(G)$ satisfying $\partial_G(e_n)=(v_{n},v_{0})$. 
\item The graph is decorated. 
\end{enumerate}
\medskip

\begin{defn}
Graphs verifying the above property are called trees.
\end{defn}

\medskip

We state the well known result that:
\begin{lem} 
The category of trees {\bf TrCoGr} endowed with the disjoint union operation $\sqcup$ form a symmetric monoidal category. 
\end{lem}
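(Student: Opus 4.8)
The plan is to verify directly the defining data and axioms of a symmetric monoidal category, noting at each stage that the required structure is inherited from the symmetric monoidal structure of $(\mathbf{Set},\sqcup)$ by transport along the edge and vertex sets. Conceptually, $\sqcup$ is nothing but the categorical coproduct in $\mathbf{TrCoGr}$, and every category with finite coproducts carries a canonical symmetric monoidal structure; making this explicit, however, still requires checking that coproducts remain inside the subcategory of trees, so I would carry out the verification by hand rather than merely invoke the general principle.

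First I would check that $\sqcup$ is a well-defined bifunctor. For two trees $G,H$, set $E(G\sqcup H)=E(G)\sqcup E(H)$ and $V(G\sqcup H)=V(G)\sqcup V(H)$, with incidence $\partial_{G\sqcup H}$ and inclusion $i_{G\sqcup H}$ defined componentwise. One must confirm that the result again lies in $\mathbf{TrCoGr}$: since no edge joins vertices coming from distinct summands, the properties (1)–(3) (at most one edge per pair, looplessness, acyclicity) hold on each summand and therefore on the union, and the decoration (4) is the disjoint union of the two decoration maps. On morphisms, given $f:G\to G'$ and $g:H\to H'$, the map $f\sqcup g$ acts componentwise on edges and restricts correctly on vertices, so $(f_E\sqcup g_E)\circ i_{G\sqcup H}=i_{G'\sqcup H'}\circ(f_V\sqcup g_V)$, i.e. incidence is preserved; compatibility with composition and identities is immediate from that of $\sqcup$ on $\mathbf{Set}$.

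Next I would exhibit the unit and the coherence isomorphisms. The unit object $\mathbf{1}$ is the empty tree, $E=V=\varnothing$, which vacuously satisfies (1)–(4). The associator $\alpha_{G,H,K}$, the left and right unitors $\lambda_G,\rho_G$, and the braiding $\sigma_{G,H}$ are taken to be the canonical bijections of sets $(E(G)\sqcup E(H))\sqcup E(K)\cong E(G)\sqcup(E(H)\sqcup E(K))$, $\varnothing\sqcup E(G)\cong E(G)$, $E(G)\sqcup E(H)\cong E(H)\sqcup E(G)$, together with their analogues on vertices. Each such bijection carries incidence to incidence and decoration to decoration, as it merely relabels the summand index, so each is an isomorphism in $\mathbf{TrCoGr}$; naturality in every argument descends from naturality in $\mathbf{Set}$.

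Finally the coherence axioms — the pentagon for $\alpha$, the triangle relating $\alpha$ to the unitors, the two hexagons for $\sigma$, and the symmetry relation $\sigma_{H,G}\circ\sigma_{G,H}=\mathrm{id}$ — all reduce, upon forgetting to the underlying edge and vertex sets, to the classical identities in $(\mathbf{Set},\sqcup)$. Since a morphism of trees is determined by its action on edges and vertices, equality of the underlying set maps forces equality of the tree morphisms, and the axioms follow. The only genuinely content-bearing step, as opposed to a transport of known facts, is the stability check of the first paragraph — that the disjoint union of two objects of $\mathbf{TrCoGr}$, which is in general disconnected and hence a forest, still satisfies all four tree axioms; this is where one uses that the defining conditions are local to the connected components, and I expect it to be the main (though mild) obstacle.
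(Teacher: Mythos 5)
Your proposal is correct, but note that the paper itself offers no argument at all: the lemma is prefaced by ``We state the well known result that'' and is left unproved, with the only trace of the relevant closure issue being the sentence immediately after it (``A disjoint union of trees will be called a forest''). Your write-up therefore supplies exactly the content the paper elides. The two genuinely load-bearing points, which you correctly isolate, are (i) stability: conditions (1)--(4) defining objects of $({\bf TrCoGr},\sqcup)$ are local to connected components and never require connectedness, so the disjoint union of trees --- a forest --- remains an object, and (ii) the empty graph $E=V=\varnothing$ vacuously satisfies the axioms and serves as the monoidal unit; everything else (bifunctoriality, associator, unitors, braiding, pentagon, triangle, hexagons, $\sigma_{H,G}\circ\sigma_{G,H}=\mathrm{id}$) transports from $({\bf Set},\sqcup)$ along the underlying edge and vertex sets, using that a morphism in this category is determined by $f_E$ together with the compatibility $f_E\circ i_G=i_H\circ f_V$. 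Your aside that $\sqcup$ is the categorical coproduct is also sound here --- even though morphisms are non-strict (edges may map to vertices), the componentwise universal map is well defined and preserves incidence --- and it gives the cleaner conceptual reason the lemma holds. One cosmetic point worth a sentence in a polished version: in the decorated setting, the two summands' decoration maps may a priori have different finite target sets, so one should either fix a common color set (as the paper effectively does, with colors $\{R,B\}$ and $\{A,B,C,D\}$ for Gauss-graphs) or take the union of targets; this costs nothing but should be said.
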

A disjoint union of trees will be called a forest. 

\medskip
\subsection*{Ghost graphs and ghost morphisms}
Here we will use the terminology introduced by Borisov--Manin~\cite{BoMa08}, and also used in the context of Feynman categories in~\cite{KaWa13}.

\smallskip
We enrich the previous graph morphism definition for $({\bf TrCoGr},\sqcup)$ as follows. 
Let $G$ be an object in $({\bf TrCoGr}, \sqcup)$. Consider in $G$ two adjacent trees $T_1$ and $T_2$.
We choose a pair of edges (or a pair of nodes which are of valency greater than 1) $e_1$ and $e_2$ (resp. $v_1$ and $v_2$), respectively in $T_1$ and $T_2$ in such a way that an edge, connecting the interiors of the edges $e_1$ and $e_2$ (resp. the vertexes $v_1$ and $v_2$), can be artificially added. 

\medskip

\begin{defn}\label{D:ghost}
Inserting an artificial edge to a given graph, such as described above, is called a ghost morphism and results in a ghost graph. 
\end{defn}
\medskip

Applying the contracting morphism operation to the ghost edge (morphism defined earlier in definition\, \ref{D:ghost}), results in retracting the ghost edge to a point. 
This operation merges both trees $T_1$ and $T_2$ into one new tree $H$. 

\medskip

\begin{prop}
The composition of a ghost operation with a classical morphism of graphs is a well defined morphism in $({\bf TrCoGr},\sqcup)$.
 \end{prop}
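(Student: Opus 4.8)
The plan is to verify directly the two defining conditions of a morphism in $({\bf TrCoGr},\sqcup)$ for the composite at hand: that its target is again an object (a tree, hence a forest under $\sqcup$), and that the underlying edge map preserves incidence. I would factor the composite as $G \xrightarrow{\gamma} G' \xrightarrow{f} H$, where $\gamma$ is the ghost operation inserting the artificial edge $g$ between the chosen data of the adjacent trees $T_1$ and $T_2$, and $f$ is the classical morphism — in the paradigmatic case the contraction of $g$ described just after Definition \ref{D:ghost}. Since composition is then taken inside the ambient category, it suffices to check that $f\circ\gamma$ is a single legitimate morphism.

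First I would check that the intermediate ghost graph $G'$ is a legitimate object of ${\bf TrCoGr}$. As $T_1$ and $T_2$ are distinct components of $G$, adjoining the single edge $g$ between them yields a connected acyclic graph: any cycle through $g$ would require a second edge between the two components, which does not exist. In the vertex version this is immediate; in the ``interior of edges'' version the edge $g$ subdivides $e_1$ and $e_2$ at new valency-two vertices, and subdivision preserves each of the four defining properties (at most one edge per pair, looplessness, acyclicity, decoration), so $G'$ is again a tree.

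Next I would verify that $H=f(G')$ is a tree and that the contraction is a bona fide non-strict morphism. Contracting $g$ identifies its endpoints into one vertex $v$ and lowers $|E|$ and $|V|$ each by one, so contracting an edge of a tree can neither create a cycle nor disconnect it. No loop is produced, because the only edge between $T_1$ and $T_2$ was $g$ itself; no multiple edge is produced, because that would need a common neighbour of the two endpoints lying simultaneously in $T_1$ and $T_2$, which is impossible across disjoint components. The decoration descends by keeping the colours/orientations of all surviving edges, discarding that of the contracted $g$, so all four tree axioms hold for $H$. For incidence I would use the fact, stressed after the morphism definition, that $i_G,i_H$ embed vertices as degenerate edges, so that $f$ may send the ghost edge to the merged vertex, $f_E(g)=i_H(v)$, while every original edge is carried to the edge of $H$ with the same endpoints up to the identification $v_1\sim v_2$. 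One then reads $\partial_H(f_E(\gamma_E(e)))=(f_V\circ\gamma_V(x),\,f_V\circ\gamma_V(y))$ off from $\partial_G(e)=(x,y)$, and $f_E\circ\gamma_E\circ i_G=i_H\circ f_V\circ\gamma_V$ holds because each of $\gamma$ and $f$ satisfies the analogous relation separately.

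I expect the main obstacle to be the bookkeeping in the ``interior of edges'' version, where $\gamma$ subdivides $e_1$ and $e_2$: one must track the new valency-two vertices and the split edges carefully to confirm that neither the subdivision nor the subsequent contraction introduces a loop, a multiple edge, or a cycle, and that the decoration and incidence data remain coherent throughout. Once this verification is organized axiom by axiom, the conclusion that $f\circ\gamma$ is a well-defined morphism of $({\bf TrCoGr},\sqcup)$ follows.
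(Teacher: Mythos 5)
Your verification goes through cleanly in the vertex version of the ghost operation, and there it is considerably more explicit than the paper itself, whose entire proof is a one-line appeal to the Borisov--Manin formalism \cite{BoMa08} and to Kaufmann--Ward's Feynman categories \cite{KaWa13}. But in the ``interior of edges'' version --- the case you yourself flag as the main obstacle --- there is a genuine gap, and it is not mere bookkeeping. After inserting the ghost edge $g$ between interior points of $e_1$ and $e_2$ and contracting $g$, the resulting tree $H$ contains the images of $e_1$ and $e_2$ as \emph{subdivided} pairs of edges meeting at the merged vertex $v$ (which has valency four; likewise the subdivision points before contraction have valency three, not two as you write, since each carries the two halves of the original edge plus $g$). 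Consequently there is no edge map $f_E\colon E(G)\to E(H)$ satisfying the incidence condition of the paper's definition of morphism: the edge $e_1$ with $\partial_G(e_1)=(x,y)$ has no admissible image, because neither half $e_1'$ nor $e_1''$ of its subdivision has endpoints $(f_V(x),f_V(y))$, and sending $e_1$ to the degenerate edge $i_H(v)$ also fails, since $(x,y)\neq(v,v)$. Non-strictness does not rescue this --- it permits edges to map to vertices, but incidence must still hold --- so your claim that ``every original edge is carried to the edge of $H$ with the same endpoints up to the identification $v_1\sim v_2$'' is false precisely for the two edges $e_1$ and $e_2$ on which the ghost operation acts.

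This failure is exactly the point where the paper's citation is doing real work: in the Borisov--Manin and Kaufmann--Ward frameworks a morphism is not a bare map on edge sets but carries additional data (the ghost graph recording the inserted and contracted edges), and composites of such enhanced morphisms are well defined by construction in that formalism rather than by checking the naive incidence law. If you want to keep your elementary axiom-by-axiom strategy, you must either restrict the statement to the vertex version --- where your argument (acyclicity of $G'$, uniqueness of $g$ as the edge between the two components ruling out loops and multiple edges after contraction, descent of decorations via the same-color condition, and $f_E\circ i_G=i_H\circ f_V$) is complete and correct --- or else handle the edge-interior version by regarding the source of the composite not as $G$ but as the subdivision of $G$ along $e_1$ and $e_2$, which is a different object, and say explicitly how this subdivision is incorporated into the morphism data. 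As written, the proposal proves the proposition in one of the two cases and asserts, without a workable map, the case that actually requires the cited machinery.
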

 \begin{proof}
 We rely mainly on the exposition of Kaufmann--Ward on Feynman categories in~\cite{KaWa13} and the Borisov--Manin formalism~\cite{BoMa08}.
 \end{proof}
 
\smallskip 

\begin{rem}
These new morphisms are also valid for $({\bf DTrCoGr},\sqcup)$, with the condition that the ghost morphism is applied only for edges decorated by the same color. Respectively, we ask that the vertices $v_1$ and $v_2$ are incident to a set of edges which are both decorated by the same set of colors. 
\end{rem}

\medskip

\subsection*{Category of rooted trees}
In the category of trees, we distinguish a class of trees called rooted trees. A rooted tree $T$ is a non-empty, connected oriented graph without loops (oriented or not) and with a {\it root}. 

\smallskip 

Edges of rooted trees are classified into two classes. 

1) The {\it external edges}: those edges, bounded by one inner vertex.

2) The {\it Internal edges}: all the other edges (i.e. those bounded by inner vertices at both ends) 

\smallskip

Any rooted tree has a {\bf unique outgoing} external edge, called the {\bf output or the root} of the tree and 
{\bf several ingoing} external edges, called {\bf inputs} or leaves of the tree.

\smallskip 

Let ${\bf RT}$ denote the category of rooted trees.
Consider $T$ and $T'$ two trees in $Ob({\bf RT})$. By a morphism from $T$ to $T'$ we understand a continuous surjective map $f: T \to T'$ with the following properties:
\begin{enumerate}
\item $f$ takes each vertex to a vertex and each edge into an edge or a vertex. 
\item $f$ preserves the orientation (if trees are oriented).
\item The inverse image of any point of $T'$ under $f$ is a connected subtree in $T$. 
\end{enumerate}

\smallskip

Both the collection of leaves in a rooted tree and the collections of edges with a given terminal vertex are ordered, using the clockwise orientation of the plane.

\medskip

\subsection{Fulton--MacPherson operad and Little disc operad}
A classical (May) operad $\mathcal{P}$ is an algebra consisting of a collection of objects $\{\mathcal{P}(n)\}_{n\geq1}$ in ${\bf C}$, such that the symmetric group $\mathbb{S}_r$ acts on $\mathcal{P}(r)$, composition maps:
		\[ \circ_i: \mathcal{P}(k)\times \mathcal{P}(l)\to \mathcal{P}(k+l-1),\] 
		
	and a unit morphism $\eta: {\bf 1}\to \mathcal{P}(1)$, satisfying equivariance, unit, associativity axioms (\cite{Ma72}, see~\cite{Fr17} for a more modern exposition).

\smallskip

According to~\cite{BoMa08}, an operad can be seen also a functor from the symmetric monoidal category $({\bf RT},\sqcup)$ of labeled graphs (the rooted trees) to a symmetric monoidal category $(G, \otimes)$, which is called the ground category. For any tree $T$, we associate the object $\mathcal{P}(T)=\otimes_{v\in T} \mathcal{P}(In(v))$, where $In(v)$ is the set  of input edges of a vertex $v$.

\smallskip 
Given a rooted tree $T$ and a set of marked  inputs, the composition of $T'$ with $T$ is done by graphting the output of $T'$ with the $i$-th input of $T$. The unit element is the rooted tree with no leaves.
Given two trees $T$ and $T'$, we have  a functorial isomorphism between  $\mathcal{P}(T\circ_i T')$ and $\mathcal{P}(T)\otimes \mathcal{P}(T')$.
 \smallskip
 

The operad that we investigate here is the Fulton--MacPherson operad~\cite{FuMa94},\cite{GeJo94}, where the collection of objects $\{\overline{\textup{Conf}}_n(\Rr^d)\}_{n\in \mathbb{N}}$ are configuration spaces of $n$ marked points on $\Rr^d$. Our attention is restricted to $\Rr^2$.

 \smallskip
 
 The Fulton--MacPherson operad and the little disc operad, introduced by Boardman and Vogt~\cite{{BoVo68},{BoVo73}}, are homotopy equivalent (see~\cite{Sa98}, Prop.4.9).
The Fulton--MacPherson compactification of the configuration space of ordered $n$-tuples of points in the Cartesian space/Euclidean space $\Rr^d$ is the topological closure of the image of:

\smallskip 

\[FM_n(\Rr^d):=\mathfrak{i}(Emb([n], \Rr^d)/(\Rr^d\rtimes \Rr_{>0})\]

where $\mathfrak{i}$ is a continuous function from the ordered configuration space into the Cartesian product of $(d-1)$-spheres and intervals: $ (\mathbb{S}^1)^{n(n-1)}\times[0,\infty)^{n(n-1)(n-2)}$. This construction will be more precisely exposed in the section~\ref{S:conf}. The group of translation and scale transformation $\Rr^d\rtimes \Rr_{>0}$ acts canonically on $\Rr^d$. 

\smallskip 

 The symmetric group $\mathbb{S}_n$ acts canonically on $FM_n(\Rr^d)$ and so Fulton--MacPherson compactifications of the configuration space of points $\overline{\textup{Conf}}_n(\Rr^d)$ is given by \[\overline{\textup{Conf}}_n(\Rr^d)=FM_n(\Rr^d)/\mathbb{S}_n.\]

\smallskip

We now describe little disc operadt briefly below. 

\smallskip 

Consider a unit disc $D$ in $\C$ and let $O(n)$ be a topological space. 
\[O(n) =\left\{\begin{pmatrix} 
z_1 & \dots &z_n \\
r_1 &\dots & r_n 
\end{pmatrix} \in \begin{pmatrix} D^n \\ \Rr^n_{+}\end{pmatrix} \mid \text{the discs}\quad r_iD+z_i \quad \text{are disjoint subsets of} \quad D \right\}. \]

\smallskip

The symmetric group $\mathbb{S}_n$ acts on $\Pc^n$ by permuting the discs. 
The product operation in this operad is defined by glueing disks.

\smallskip 

If \[a= \begin{pmatrix} 
z_1 & \dots &z_k \\
r_1 &\dots & r_k
\end{pmatrix}\quad \text{and} \quad b_i= \begin{pmatrix} 
w^i_1 & \dots &w^i_{n_i} \\
s^i _1 &\dots & s^i_{n_i} 
\end{pmatrix}, \]
then, the final output of the product operation is: 
\[\begin{pmatrix} 
r_1w^1_1+ z_1 & \dots & r_1w^1_{n_{1}}+ z_1 & \dots & r_kw^k_{1}+ z_1& r_kw^k_{n_{k}}+ z_k\\
r_1s^1_1 &\dots & r_1s^1_{n_1} &\dots & r_ks^k_{1} &r_ks^k_{n_k}
\end{pmatrix}.\]
 
The map from the topological spaces $O(n)$ to the configuration space $\overline{\textup{Conf}}_k(\C)$ defined by \[ \begin{pmatrix} 
z_1 & \dots &z_k \\
r_1 &\dots & r_k
\end{pmatrix} \to (z_1, \dots , z_k)\] is a homotopy equivalence. 

\smallskip

The relation between the little disc operad and the Fulton--Macpherson operad is stated in the following proposition. 
\begin{prop}[\cite{LaVo12}, Prop.\,5.6 and \cite{Sa98}] 
The Fulton--MacPherson operad $FM_n(\Rr^d)$ is weakly equivalent in the model structure on operads with respect to the classical model structure on topological spaces, to the little $d$-disk operad.\end{prop}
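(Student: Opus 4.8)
The plan is to work in the standard (Berger--Moerdijk) model structure on topological operads, in which a morphism is a weak equivalence precisely when it is a weak homotopy equivalence in every arity. Consequently, to prove the proposition it suffices to produce a zig-zag of genuine operad morphisms, each of which is an arity-wise weak homotopy equivalence, joining $FM(\Rr^d)$ to the little $d$-disk operad $\mathcal{D}_d$. The conceptual reason such a comparison should exist is that both operads are operadic thickenings of one and the same collection of spaces, the configuration spaces $\textup{Conf}_n(\Rr^d)$, which on their own carry no canonical operad structure; each model supplies the missing composition data geometrically (infinitesimal insertion at a boundary screen for $FM$, rescaled insertion into a disk for $\mathcal{D}_d$), and the content of the statement is that these two choices agree up to homotopy.

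First I would establish the arity-wise equivalences. The space $FM_n(\Rr^d)$ is a compact manifold with corners whose interior is $\textup{Conf}_n(\Rr^d)/(\Rr^d\rtimes\Rr_{>0})$; using a collar of the corner strata one builds a deformation retraction of $FM_n(\Rr^d)$ onto this interior, so the inclusion of the open configuration space is a homotopy equivalence. On the little-disk side, shrinking all radii $r_i\to 0$ while fixing the centers $z_i$ defines a deformation retraction of $\mathcal{D}_d(n)$ onto the image of the center map, so $\mathcal{D}_d(n)\simeq\textup{Conf}_n(\Rr^d)$; for $d=2$ this recovers the homotopy equivalence between $O(n)$ and the configuration space recorded above. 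All three spaces are therefore homotopy equivalent in each arity, equivariantly for the $\mathbb{S}_n$-action, which is the easy half of the argument.

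The hard part is to upgrade these arity-wise equivalences to a comparison of operads, i.e.\ to maps respecting the operadic compositions $\circ_i$ simultaneously in all arities. The morphism I would construct goes $\mathcal{D}_d\to FM(\Rr^d)$: a configuration of disjoint round disks determines not only its centers $(z_1,\dots,z_n)\in\textup{Conf}_n(\Rr^d)$ but also, through the radii and relative positions, exactly the blow-up data (unit direction vectors and normalized relative scales) that the Fulton--MacPherson compactification records on its boundary screens. The key verification is that little-disk composition --- inserting a rescaled copy of one disk configuration into the $i$-th disk of another --- is carried by this assignment to the $\circ_i$ operation of $FM$, namely insertion of an infinitesimal blown-up configuration at the $i$-th point, and that this identification is continuous up to, and compatible with, the corner strata of $FM_n(\Rr^d)$ and equivariant for $\mathbb{S}_n$. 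Granting this, the map is an operad morphism and, being an arity-wise homotopy equivalence by the previous paragraph, a weak equivalence in the operadic model structure. The main obstacle is precisely this boundary compatibility: one must show that the rescaling built into little-disk composition matches, in the limit of vanishing radii, the infinitesimal insertion defining $FM$ composition, so that no discontinuity arises along the strata where points collide. This is the content of \cite{Sa98} (Prop.\,4.9) and \cite{LaVo12} (Prop.\,5.6), to which I would appeal for the detailed estimates; an alternative, less explicit route is to observe that both operads are cofibrant-enough models of the $E_d$-operad and invoke the essential uniqueness of such models, but the comparison map above is more economical and geometrically transparent.
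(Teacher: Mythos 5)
The paper itself gives no proof of this proposition --- it is imported wholesale from \cite{Sa98} (Prop.~4.9) and \cite{LaVo12} (Prop.~5.6) --- so your attempt is effectively measured against the literature argument, and there is a genuine gap in it: the strict operad morphism $\mathcal{D}_d\to FM(\Rr^d)$ that your whole third paragraph rests on does not exist, and the failure is structural, not a matter of ``detailed estimates'' along the corner strata. In $FM$, any composite $x\circ_i y$ with $y$ of arity $\geq 2$ lies, by construction, in the closed boundary stratum indexed by the corresponding collision; in $\mathcal{D}_d$, composites are ordinary configurations of disks at finite scale, and one and the same element is typically a composite in several mutually incompatible ways. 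Concretely, take $c\in\mathcal{D}_d(3)$ to be three small disks centered at $-1/2$, $0$, $1/2$ on a line inside the unit disk. Then $c=a\circ_1 b$ exactly (disks $1,2$ fit in a round disk disjoint from disk $3$) and also $c=a'\circ_2 b'$ exactly (disks $2,3$ fit in a round disk disjoint from disk $1$). Any operad morphism $f:\mathcal{D}_d\to FM(\Rr^d)$ would force $f(c)=f(a)\circ_1 f(b)$ to lie in the image of $\circ_1:FM_2\times FM_2\to FM_3$, i.e.\ in the stratum where points $1,2$ have collided, and simultaneously in the stratum where $2,3$ have collided. Since the subsets $\{1,2\}$ and $\{2,3\}$ are overlapping but not nested, no tree indexing a stratum of $FM_3$ can contain both, so these two strata are disjoint: contradiction, already at the level of sets. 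The ``main obstacle'' you flag at the end is therefore not a boundary-continuity issue that the cited references resolve by estimates; the map you propose cannot be defined at all, and indeed neither reference constructs such a morphism.

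This is exactly why the known proofs proceed by a zig-zag through a resolution rather than by a single geometric map: Salvatore's argument establishes that the Fulton--MacPherson operad is a \emph{cofibrant} operad (its structure is that of a Boardman--Vogt-type $W$-construction, where internal edges carry length parameters interpolating between finite-scale insertion and the infinitesimal insertion of the $FM$ boundary) and obtains the comparison with $\mathcal{D}_d$ from this, together with the arity-wise equivalences. Your fallback remark --- that both operads are ``cofibrant-enough models of the $E_d$-operad'' and one may invoke essential uniqueness --- is circular as stated, since the claim that they model one and the same homotopy type of operad is precisely what is being proved. The arity-wise half of your argument is essentially correct and standard ($FM_n(\Rr^d)$ retracts onto its interior; the center map $\mathcal{D}_d(n)\to \textup{Conf}_n(\Rr^d)$ is an $\mathbb{S}_n$-equivariant homotopy equivalence, though note that radius $0$ is excluded from $\mathcal{D}_d(n)$, so one retracts onto small positive radii rather than literally letting $r_i\to 0$), but it is, as you say yourself, the easy half; the operadic upgrade needs to be rebuilt on the $W$-construction/cofibrancy mechanism of \cite{Sa98}, not on a direct morphism.
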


\subsection{Cosimplical and semicosimplical models}~\label{S:1}
One can investigate those configuration spaces using cosimplicial models. We borrow the cosimplicial model which has been constructed for the space of long knots $\mathbb{R}\hookrightarrow \mathbb{R}^d$, for $d\geq 4$ (presented in~\cite{{Sin1}, {Sin2},{LaVo12}}) and apply it to the more specific case of configurations spaces with marked points on the complex plane.

\smallskip 

We briefly recall some general facts about cosimplicial and semicosimplicial models. For more details we recommend~\cite{Bo} and~\cite{BoKa72}.

\smallskip 

Let $\Delta_{\bullet}$ be the a category of finite ordinal numbers, whose objects are finite ordinal sets $[n]=\{0,1,...,n\}$, $n\in \mathbb{N}$; morphisms are order-preserving injective maps among them. Every morphism in $\Delta_{\bullet}$, different from the identity, is a finite composition of coface morphisms:
\begin{equation}
\partial_k:[i-1]\to [i], \quad \partial_k(p)=\begin{cases}
    
    p\quad if \quad p<k \\
    p+1\quad if \quad k\leq p
   \end{cases}
  for \quad k=0,\dots, i
   \end{equation}
Relations about compositions are $\partial_l\partial_k= \partial_{k+1}\partial_l$ , for every $l \leq k$.

\smallskip 

Given a category ${\bf C}$, a cosimplicial object $C^\bullet$ in ${\bf C}$ is a functor from $\Delta_{\bullet}$ to {\bf C}. If $S$ is a nonempty finite totally ordered set then $C^S$ will denote the value of $C^\bullet$ at $S$, except that we write $C^m$ instead of $C^[m]$. 
The collection of objects $C^n\in {\bf C}$ for $n\geq 0$, are endowed with {\it coface} maps $d^i: C^{n}\to C^{n+1}$ for $1\leq i \leq n$ and {\it codegeneracy} maps $s_j: C^{n+1}\to C^n$, $1\leq j \leq n$, satisfying the usual cosimplicial identities (~\cite{BoKa72} p.\,267). To a cosimplicial object over ${\bf C}^{op}$, corresponds a cosimplical object over ${\bf C}$. 

\smallskip

By simplicial category, is meant a category ${\bf C}$ enriched over a category ${\bf S}$, and we write $hom(X,Y) \in S$ for the mapping space of $ X,Y\in {\bf C}$. By a simplicial model category, we mean a model category ${\bf C}$ which is also a simplicial category satisfying axioms:
\smallskip 

\begin{enumerate}
\item the object $X\otimes K$ and $hom(K,X)$ exist for each $X\in {\bf C}$ and each finite $K \in {\bf S}.$
\item If $i: A\to B\in {\bf C}$ is a cofibration and $p: X\to Y \in {\bf C} $is a fibration , then the map
\[(i,p): hom(B, X)\to hom(A,X) \times_{hom(A,Y)}hom(B,Y),\]
is a fibration, which is trivial if either $i$ or $p$ are trivial.
\end{enumerate} 
As is mentioned in~\cite{Bo}, when ${\bf C}$ is a model category, there is an induced model category structure on the category of cosimplicial objects over ${\bf C}$.

\smallskip 

A semicosimplicial object in a category ${\bf C}$ is a covariant functor $B^{\Delta}:\Delta_{\bullet} \to {\bf C}$(see~\cite{{EZ50}, {We94}}). In other words, we can say that a semicosimplicial object $B^{\Delta}$ is a
diagram in ${\bf C}$ such that:

\[ B_0 \mathrel{\substack{\textstyle\longrightarrow\\[-0.2ex]
      \textstyle\longrightarrow \\[-0.2ex]}}
      B_1 \mathrel{\substack{\textstyle\longrightarrow\\[-0.2ex]
     \textstyle\longrightarrow \\[-0.2ex]
      \textstyle\longrightarrow}}
      B_2 \mathrel{\substack{\textstyle\longrightarrow\\[-0.2ex]
      \textstyle\longrightarrow \\[-0.2ex]
       \textstyle\longrightarrow \\[-0.2ex]
      \textstyle\longrightarrow}} \dots\]

where each $B_i$ is in {\bf C}, and, for each $i>0$, there exist $i+1$ morphisms
$\partial_k:B_{i-1} \to B_i,$ $k=0,...,i$, such that $\partial_l\partial_k = \partial_{k+1}\partial_l$, for any $l \leq k.$

\smallskip 

This definition is used as follows. Let $X$ be a smooth variety, defined over an algebraically closed field of characteristic 0. Consider an affine open cover $U = \{U_i\}$ of $X$, and $\Theta$ a sheaf of Lie algebras on $X$. 

\smallskip 
Then, we can define the \v Cech semicosimplicial Lie algebra $\Theta(U)$ as the semicosimplicial Lie algebra:

\[ \Theta(U):\quad \prod_i\Theta(U_i) \mathrel{\substack{\textstyle\longrightarrow\\[-0.2ex]
      \textstyle\longrightarrow \\[-0.2ex]}}
      \prod_{i<j}\Theta(U_{ij}) \mathrel{\substack{\textstyle\longrightarrow\\[-0.2ex]
     \textstyle\longrightarrow \\[-0.2ex]
      \textstyle\longrightarrow}}
     \prod_{i<j<k}\Theta(U_{ijk}) \mathrel{\substack{\textstyle\longrightarrow\\[-0.2ex]
      \textstyle\longrightarrow \\[-0.2ex]
       \textstyle\longrightarrow \\[-0.2ex]
      \textstyle\longrightarrow}} \dots\]

where the coface maps are defined as follows: 
\[\partial_k: \prod_{i_0<\dots<i_{h-1}}\Theta(U_{i_0\dots i_{h-1}})\to \prod_{i_0<\dots<i_{h}}\Theta(U_{i_0\dots i_{h}}) \]
and are given by \[\partial_k(x)=x_{i_0\dots\hat{i}_k\dots i_{h|_{U_{i_0\dots i_{h}}}}},\quad \text{for}\quad k=0,...,h.\]

\medskip 

\subsection{Cosimplical model for configuration spaces and operads}\label{S:conf}
The definition of operads is used in the cosimplical setting. Recall the $\circ_i$ operations $\circ_i: \mathcal{P} (n)\circ \mathcal{P}(m) \to \mathcal{P}(n+m-1)$, which provide a basic set of morphisms for an operad. following section 3 of \cite{MS02}, we have the following adapted definition of operads.

\medskip

\begin{defn}[section 3 of \cite{MS02}]\label{D:co} 
Given an operad $\mathcal{P}$ with multiplication $\circ$, let $\mu$ denote the morphism $1_{\bf C} \to \mathcal{P}(2)$.
We define the coface maps $d^i: \mathcal{P}(n)\to \mathcal{P}(n+1)$ by

\begin{equation}
 d^i =
 \begin{cases}
  1_{\bf C} \circ \mathcal{P}(n) \xrightarrow{\mu\circ id} \mathcal{P}(n)(2)\circ \mathcal{P}(n) \xrightarrow{\circ_0} \mathcal{P}(n+1) &\text{if}\quad i=0\\
  \mathcal{P}(n) \circ 1_{\bf C} \xrightarrow{id\circ \mu} \mathcal{P}(n)\circ \mathcal{P}(2) \mathcal{P}(n+1) \xrightarrow{\circ_i} \mathcal{P}(n+1) & \text{if}\quad 0<i<n+1 \\
  1_{\bf C}\circ \mathcal{P}(n) \xrightarrow{\mu\circ id} \mathcal{P}(2)\circ \mathcal{P}(n) \mathcal{P}(n+1) \xrightarrow{\circ_{n+1}} \mathcal{P}(n+1) & \text{if}\quad i=n+1 
  \end{cases}  
  \end{equation}
  
  \smallskip
  
  The codegeneracy map $s_i$ is defined as $\mathcal{P}(r_i)$, where $r_i:T_n\to T_{n-1}$ contracts the $i$-th leaf of the tree $T_n$. 
  Let $\mathcal{P}^{\bullet}$ be the cosimplicial object in ${\bf C}$ whose $n$-th entry is $\mathcal{P}(n)$ and whose coface and codegeneracy
maps are given by $d_i$ and $s_i$ above.
\end{defn} The maps $d_i$ and $s_i$ satisfy cosimplicial identities.

\medskip

We discuss the details of the construction of the little disc operad in the flavour of~\cite{LaVo12} and~\cite{Sin2} and sketch the construction. 

\smallskip 

Consider $M$ to be the 2-dimensional disc (with non-empty boundary). The $n$-th entry of the considered cosimplicial model is given by the Cartesian product $M^n$. Now, fix two unit tangent vectors $\alpha \in UTM$ (resp. $\beta\in UTM$), located on the boundary of $M$, pointing inward (resp. outward). 

\smallskip 

We consider the configurations spaces of marked points on $M$, denoted $\textup{Conf}_n(M)$. Elements of $UTM$---the unit tangent bundle to $M$---are denoted by $\xi =(x,\overrightarrow{v})$, with $x \in M$ and $\overrightarrow{v}\in UT_xM$ with $||\overrightarrow{v}||=1$.

\smallskip 

We consider the configuration space $\textup{Conf}_{n+2}(M)$, consisting of $(n+2)$-tuples, \[\xi_0=(x_0,\overrightarrow{v}_0),\xi_1=(x_1,\overrightarrow{v}_1), \dots ,\xi_{n+1}=(x_{n+1},\overrightarrow{v}_{n+1}))\in (UTM)^{n+2}\] such that $\xi_0=\alpha, \xi_{n+1}=\beta$ and $x_i \neq x_j$. We will work in the framework of the Fulton--MacPherson compactification $\overline{\textup{Conf}}_{n+2}(M)$. 

\smallskip

The idea of the construction is that elements of $\overline{\textup{Conf}}(M)$ consist of some ``virtual'' configurations, where the marked points $x_i$ and $x_j$ may be equal---in which case, some extra data serves to distinguish two points infinitesimally close---as explained in the definitions 4.1 and 4.12 of~\cite{Sin1}).

\smallskip

The following crucial step concerns the construction of the doubling maps, for this cosimplicial model. 
Let $0\leq i \leq q$:
 \begin{equation}
 d^i:\overline{\textup{Conf}}_{q-1}(M)\to \overline{\textup{Conf}}_{q}(M)
 \end{equation}
\[(\xi_0,..., \xi_i,...,\xi_q)\mapsto (\xi_0,..,\xi_i, \xi'_i,....,\xi_q),\]
where $\xi'=(x_i',\overrightarrow{v}_i)$ with $x_i=x'_i$, but infinitesimally $x_i'-x_i=\overrightarrow{v}_i$. 

\smallskip 

The forgetting maps are defined as follows:
\begin{equation}s_j: \overline{\textup{Conf}}_{q}(M)\to \overline{\textup{Conf}}_{q-1}(M),
 \end{equation}
 
 \[ (\xi_0, ..., \xi_i,..., \xi_q)\mapsto (\xi_0, ..., \hat{\xi_i},..., \xi_q),\] 
where $\hat{\xi_i}$ means that this point is ``forgotten''.

\begin{rem}
As was pointed out in~\cite{Sin1} (definition 6), by defining forgetting maps $s_j$ and $d_i$ as cofaces, certain cosimplicial identities are not satisfied. To remedy, it is possible to replace $\overline{\textup{Conf}}_{q}(M)$ by a homotopy equivalent quotient, for which the induced map satisfy the cosimplicial identities. This latter space is denoted by $C'\langle[M, \partial]\rangle$.
The cosimplicial space is given by \[X_{\bullet}\{C'\langle[M, \partial]\rangle, d_i, s_j\}_{q\geq 0}.\]
\end{rem}

\smallskip
 To have the cosimplicial model, two important tools are introduced: the so-called maps $\theta$ and $\delta$. Let $S\in {\bf Fin}$ be a finite set of points. Let $\tilde{x}$ be a given configuration.  

\begin{enumerate} 

\item Take two (distinct) elements $a$ and $b$ in $S$: 

\[\theta_{a,b}: \textup{Conf}_S(M)\to \mathbb{S}^{1}\]

\[\theta_{a,b}: \tilde{x}\mapsto \frac{\tilde{x}(b)-\tilde{x}(a)}{||\tilde{x}(b)-\tilde{x}(a)||}.\]
This map gives the direction between vectors given two points of the configuration $x$. 

\item For three distinct elements $a,b,c$ in $S$, we define their relative distance map: 

\[\delta_{a,b,c}:\textup{Conf}_S(M)\to [0,+\infty)\]
\[ \tilde{x}\mapsto \frac{||\tilde{x}(a)-\tilde{x}(b)||}{||\tilde{x}(a)-\tilde{x}(c)||}.\]

\smallskip

For three different points $a,b,c$ in $S$ and $x\in \overline{\textup{Conf}}_S$ we adopt
the notation: \[\tilde{x}(a)\cong \tilde{x}(b) \ \mathrm{ rel}  \ \tilde{x}(c),\] if their relative distance is zero. 
\end{enumerate}

\smallskip

\begin{rem} Note that up to translations and dilatation, any configuration can be recovered from the direction $\theta$ and the relative distances $\delta_{a,b,c}$.
\end{rem}

\begin{prop} 
Let $\tilde{x}$ be a given point in the Fulton--MacPherson compactified configuration space. We have the following equivalent conditions:

\begin{itemize}
\item $\tilde{x}$ lies in the boundary of the configurations space, 
\item (if and only if) there exist three points $a,b,c$ in $S$, such that $a$ and $b$ are relatively close, with respect to $c$. This is denoted by $\tilde{x}(a)\cong \tilde{x}(b) \ \mathrm{ rel}  \ \tilde{x}(c)$.
\end{itemize}
\end{prop}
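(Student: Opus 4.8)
The plan is to prove both directions of the equivalence using the maps $\theta$ and $\delta$ that were just introduced, exploiting the fact (stated in the preceding remark) that any configuration is recovered up to translation and dilatation from the directions $\theta_{a,b}$ and the relative distances $\delta_{a,b,c}$. Recall that the Fulton--MacPherson compactification $\overline{\textup{Conf}}_S(M)$ is realised as the topological closure of the image of the embedding $\mathfrak{i}$ into $(\mathbb{S}^1)^{|S|(|S|-1)}\times[0,\infty)^{|S|(|S|-1)(|S|-2)}$ whose coordinates are precisely the $\theta_{a,b}$ and the $\delta_{a,b,c}$. The interior $\textup{Conf}_S(M)$ is the locus of honest configurations, where all points are distinct; the boundary is the added limit points. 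The whole argument therefore reduces to reading off, coordinate by coordinate, what distinguishes a genuine configuration from a boundary point.

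First I would establish the easy implication: a genuine (interior) configuration has all its relative distances $\delta_{a,b,c}$ strictly positive, since $\tilde{x}(a)\neq\tilde{x}(b)$ forces $\|\tilde{x}(a)-\tilde{x}(b)\|>0$ while the denominator $\|\tilde{x}(a)-\tilde{x}(c)\|$ is finite and nonzero. Hence no triple satisfies $\tilde{x}(a)\cong\tilde{x}(b)\ \mathrm{rel}\ \tilde{x}(c)$, and the point lies in the open stratum. Contrapositively, if some $\delta_{a,b,c}=0$ then $\tilde{x}$ cannot be an interior configuration and must lie in the boundary. This gives the direction ``$\delta_{a,b,c}=0$ for some triple $\Rightarrow$ boundary point'' almost immediately.

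The substantive direction is the converse: every boundary point exhibits at least one collision triple. Here I would argue that a point $\tilde{x}\in\partial\overline{\textup{Conf}}_S(M)$ is by construction a limit of a sequence of genuine configurations $\tilde{x}^{(k)}$ that leaves every compact subset of the open configuration space (modulo the translation--dilatation action that normalises scale). Since the quotient by $\Rr^d\rtimes\Rr_{>0}$ rescales configurations, divergence of the normalised family can only come from two distinct points collapsing together relative to the overall scale; that is, after normalising by the largest pairwise distance $\|\tilde{x}^{(k)}(a_0)-\tilde{x}^{(k)}(c)\|$, some other pair $a,b$ must have $\|\tilde{x}^{(k)}(a)-\tilde{x}^{(k)}(b)\|\to 0$. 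Passing to the limit gives $\delta_{a,b,c}=0$, i.e. $\tilde{x}(a)\cong\tilde{x}(b)\ \mathrm{rel}\ \tilde{x}(c)$. The main obstacle I anticipate is making this limiting/normalisation argument precise: one must check that after quotienting by translations and dilatations a boundary point genuinely forces a vanishing relative distance rather than, say, a mere degeneration of the direction coordinates $\theta_{a,b}$, and that the chosen reference point $c$ can always be taken so that the ratio limit is well defined and equal to zero. This requires invoking the explicit structure of $\mathfrak{i}$ and the compactness of the target $(\mathbb{S}^1)^{|S|(|S|-1)}\times[0,\infty)^{|S|(|S|-1)(|S|-2)}$, so that a convergent subsequence of $\mathfrak{i}(\tilde{x}^{(k)})$ exists and its limit coordinates can be inspected.

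To finish, I would combine the two implications and note that the characterisation is symmetric in the roles of $a$ and $b$, consistent with the unordered nature of the colliding pair. I would also remark that the condition is independent of the choice of the third reference point $c$ in the following sense: if $a$ and $b$ collide relative to one admissible $c$, they collide relative to any $c$ whose distance to the cluster stays bounded below, which is exactly what the normalisation guarantees. This ties the statement back to the definitions of $\theta$ and $\delta$ and completes the equivalence.
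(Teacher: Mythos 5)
Your argument is correct in outline, and the first thing to say is that the paper itself offers \emph{no} proof of this proposition: it is stated bare, as a fact imported from the cosimplicial-model literature the section is following (Sinha's construction of $\overline{\textup{Conf}}_S$ and Lambrechts--Voli\v{c}, where the boundary of the Fulton--MacPherson compactification is characterised in exactly these terms). So there is no in-paper proof to diverge from; what you have written is essentially the standard argument from those sources. Your easy direction is fine: an honest configuration has all $\delta_{a,b,c}>0$, so a vanishing relative distance certifies a boundary point. For the converse, your normalisation argument is the right mechanism, and it is precisely the quotient by $\Rr^d\rtimes\Rr_{>0}$ in the paper's definition of $FM_n(\Rr^d)$ that makes it work: rescale each approximating configuration to diameter $1$, extract a convergent subsequence of the normalised representatives; if the limit tuple had all points distinct it would be an interior point of the compactification, a contradiction, so two points coincide in the rescaled limit, and choosing $c$ among a pair realising the diameter keeps $\|\tilde{x}(a)-\tilde{x}(c)\|$ bounded below (by $1/2$, say), whence $\delta_{a,b,c}\to 0$.

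Three small repairs, none fatal. First, the target $(\mathbb{S}^1)^{n(n-1)}\times[0,\infty)^{n(n-1)(n-2)}$ is \emph{not} compact as written (a slip you inherit from the paper); in Sinha's construction the relative distances take values in $[0,\infty]$, or else one sidesteps the issue by extracting the convergent subsequence from the normalised representatives in $(\Rr^2)^S$ rather than in the target --- which your own second argument already does, so simply drop the appeal to ``compactness of the target.'' Second, your worry that the degeneration might show up only in the $\theta$ coordinates, or as a blow-up of a ratio, is resolved by the symmetry of the $\delta$'s: if $\delta_{a,b,c}\to\infty$ along the sequence then $\delta_{a,c,b}\to 0$, so a diverging relative distance also yields a vanishing one after permuting entries. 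Third, the clean dichotomy ``boundary $\iff$ some collision triple'' holds in the normalised model ($\Rr^d$ modulo translation--dilatation) that the paper uses to define $FM_n(\Rr^d)$; in a model for a compact $M$ carrying an $M^n$ factor, total collapse of all points to a single point of $M$ is a boundary phenomenon that the $\delta$'s detect only after the rescaling step --- your proof performs that rescaling, so it survives, but your closing claim that the criterion is independent of the reference point $c$ should be kept in the restricted form you actually state: independence holds only among those $c$ whose distance to the colliding cluster stays bounded below.
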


\subsection{Weak partitions and totally ordered sets}\label{S:op}
For the operadic composition operation, on the configuration space, it is necessary to introduce {\it weak partitions}. We follow some notions depicted in~\cite{LaVo12}. 

\smallskip 

Fix two objects: a finite set $S\in {\bf Fin}$; a weak partition $\tilde{v}:S\to Q$, where $Q$ is a linearly ordered finite set. A {\it linearly ordered} set (or a totally ordered set) is a pair $(\mathcal{L},\leq)$, where $\mathcal{L}$ is a set and $\leq$ is a reflexive, transitive, and antisymmetric relation on $\mathcal{L}$, such for any pair $x, y \in \mathcal{L}$ we have $x \leq y$ or $y \leq x$. We write $x < y$ when $x \leq y$ and $x\neq y$.

\smallskip

 A {\it weak partition of a finite set} $S$ is a map $\tilde{v}: S \to Q$ between two objects of ${\bf Fin}$, where for a given $p \in Q$, the preimages $\tilde{v}^{-1}(p)$ are elements of the partition. Note that it is not required that $\tilde{v}$ is surjective: some of the elements $\tilde{v}^{-1}(p)$ are allowed to be empty. 

\smallskip

Concerning terminology, in case $\tilde{v}$ is not surjective, the weak partition is degenerate. Otherwise, it is non-degenerate. We will simply say ``partition'' instead of a non-degenerate weak partition. The (weak) partition $\tilde{v}$ is ordered if its codomain $Q$ is equipped with a linear order. The undiscrete partition is the partition $\tilde{v} : S \to \{1\}$ whose only element is $S$. 

\smallskip

We adopt the following notations: 
\begin{equation}
Q^{*}=\{0\} \star Q, A_Q=\tilde{v}^{-1}(p), \text{and}\quad A_0 =Q. 
\end{equation}

\smallskip 

Given two disjoint linearly ordered sets $(L_1,\leq_1)$ and $(L_2,\leq_2)$ their ordered sum is the linearly ordered set $L_1 \star L_2 := (L_1 \cup L_2 , \leq)$ such that the restriction of $\leq$ to $L_i$ is the given order $\leq_i$ and such that $x_1 \leq x_2$ when $x_1 \in L_1$ and $x_2 \in L_2$. More generally, if $\{L_p\}_{p\in Q}$ is a family of linearly ordered sets, indexed by a linearly ordered set $Q$, its ordered sum $\star_{p\in Q} L_p$ is the disjoint union $\sqcup_{p\in Q}L_p$ equipped with linear order $\leq$, whose restriction to each $L_p$ is the given order on that set such that $x<y$ when $x\in L_p$ and $ y\in L_q$ with $p<q$ in $Q$. 
 
 \smallskip 
 
We have: 
\begin{equation}\prod_{p\in Q^*} \overline{\textup{Conf}}_{A_p}=\overline{\textup{Conf}}_Q\times \prod_{p\in Q} \overline{\textup{Conf}}_{\tilde{v}^{-1}(p)},\end{equation}
which defines the operad structure:
\[\Phi_{\tilde{v}}:\prod_{p\in Q^{*}} \overline{\textup{Conf}}_{A_{p}}\to \overline{\textup{Conf}}_{A}.\]

\smallskip 

As a more intuitive explanation, the configuration $x=\Phi_{\tilde{v}}((x_p)_{p\in Q^*})$ is obtained by proceeding by a replacement, the $p$-th component $x_0(p)$ of the configuration $x_0\in \overline{\textup{\textup{Conf}}}_{Q}$, for any point $p\in Q$, by the configuration $x_p\in \overline{\textup{Conf}}_{A_Q}$ made infinitesimal, see previous section and section 5.2 of~\cite{LaVo12} for more details. The case of inserting two points---being infinitesimally close---enter the description from~\cite{LaVo12}, where the cosimplicial space is defined. 


\section{Gauss-skizze stratification}
\subsection{Embedded graphs arising from complex polynomials}
\subsection*{Terminology and conventions}
\smallskip

The geometric realisation of a tree is a connected contractible 1-complex with at least one edge. We will call vertices of valency 1 leaves. A forest is a disjoint union of trees.
An embedded forest is a subset of the plane which is the image of a proper embedding of a forest minus leaves, to the plane. The notation $Card(V)$ stands for the cardinality of the set $V$ . The notation $val(v)$, where $v\in V$, indicates the valency of the vertex $v$ i.e. how many edges are incident to the vertex $v$. By {\it tails} we mean the union of a vertex and the set of half-segments incident to it.

\smallskip 
\subsection*{Level curves and Gauss-skizze}
It is known by~\cite{ErJaNa07} that if $u(x,y)$ is a harmonic polynomial in two variables, of degree $n$, then the level curve $\{(x,y)\in \mathbb{R} : u(x,y) = 0\}$ is an embedded forest. This comes from the fact that the existence of a cycle contradicts the maximum principle. All vertices of the forest, except for the leaves, have even valencies. Moreover, there are exactly $2n$ leaves.

\medskip

\begin{defn}
Color in blue (resp. red) the embedded forest given by the level set $\{(x,y)\in \mathbb{R} : ReP(x,y) = 0\}$ (resp. $\{(x,y)\in \mathbb{R} : ImP(x,y) = 0\}$).
A $B$-pattern (resp. R-pattern) is the level set $\{(x,y)\in \mathbb{R} : ReP(x,y) = 0\}$ (resp. $\{(x,y)\in \mathbb{R} : ImP(x,y) = 0\}$). The space of $B$-patterns (resp. $\Bp_n$-patterns) for degree $n$ polynomials is denoted by $\Bp_n$ (resp. $\Rp_n$). If colors are ignored, we refer to the space of patterns associated to degree $n$ polynomials as $\Pa_n$.
\end{defn} 
\medskip

\begin{defn}
\ 

\begin{itemize}
\item A Gauss-skizze is a bi-colored embedded forest, being the inverse image under the polynomial $P$ of the real and imaginary axis. The space of Gauss-skizze (for the space of degree $n$ polynomials) is denoted by $\GS_n$. 
\item A Gauss-graph is an object of the conceptual category of decorated planar forests corresponding to a given Gauss-skizze, and coding its combinatorial data. The space of $n$-Gauss-graphs is denoted by $\GG_n$.
\end{itemize}
\end{defn} 
\medskip

The properties of Gauss-skizze's are such that: 
\begin{itemize}
\item $P^{-1}(\mathbb{R})$ (resp.\,$P^{-1}(i\mathbb{R})$) gives a system of $n$ red blue (resp. red) curves, properly embedded in $\C$.
\item The curves are oriented, and each red curve intersects a unique blue curve, once. 
\item The asymptotic directions are $\frac{2k\pi}{4n}$, where $k\in \{0,...,4n-1\}$.
\end{itemize}

\begin{itemize}
\item The roots of the polynomial are at the intersections of red and blue curves. The set of roots is denoted $V_{roots}$. 
\item The critical points $z_0$ of $P$ verifying $Re(P)(z_0)=0$ (resp.\,$Im(P)(z_0)=0$) such that their associated critical values $P(z_0)$ are imaginary numbers (resp. real) are at the intersection of curves of one color. The set of those critical points is denoted$V_{crit}$. 
\end{itemize}
 \begin{rem} 
For simplicity, we have added a coloring to the complementary regions to the embedded planar graph, instead of defining an orientation of the curves. Both approaches are equivalent in the context of those planar graphs. 
\end{rem}

We color the regions of $\C\setminus \Rr\cup\imath\Rr$ in the colors $A,B,C,D$, where $A$ is the color of the rightmost quadrant in the upper-half plane and the other colors of the other regions $B,C,D$ follow respectively in counterclockwise order. We can label the regions in the complementary part of the drawing by $A,B,C,D$: each region being the inverse image of a quadrant of the complex plane. This tool gives more information about the Gauss-skizze:

\smallskip

\begin{itemize}
\item since, the roots of the polynomial are given by the intersection of a red and a blue curve, the adjacent 2-faces are respectively colored $A,B,C,D$ in the counterclockwise orientation. 
\item Critical points of real (imaginary) critical values lie at the intersection of curves of one color. The adjacent 2-faces are colored $A,D,A,D,A,D$, or $B,C,B,C,B$. For curves of the opposite color we have: either $A,B,A,B,A$, or $C,D,C,D,C,D$.
\end{itemize}

\medskip 

\subsection*{Gauss-graphs}
Formally and explicitly, Gauss-graphs verify the following definition. 
\medskip

\begin{defn}~\label{D:1}
The $n$-Gauss-graph $\sigma^n$ is a decorated planar forest in $({\bf TrCoGr},\sqcup)$, defined by a sextuple of sets and maps: 
\[(V_\sigma=V_{roots}\cup V_{crit}\cup \{*\}, E_{\sigma}, F_\sigma,\partial_{1}, \partial_{2},n),\]
where:
\begin{itemize}
\item $V_{\sigma}$ are vertices; $0 \leq Card(V_{roots})\leq n$ and $0 \leq Card(V_{crit}) \leq n-1$. 
\item $E_{\sigma}$ are (colored) edges, 
\item $F_\sigma$ are (colored) 2-cells, 
\item the map $\partial_1: F_\sigma\to V_{\sigma}$ , $\partial_2: F_\sigma\to E_{\sigma}$ are the boundary maps.
\end{itemize}
\end{defn}

We call this graph {\it decorated} since, two additional (coloring) maps exist:
 \[E_{\sigma}\to \{R,B\}\]
 \[e\mapsto e_{R}\quad (\text{resp.}\quad e_{B})\]
 
 \[F_\sigma\to \{A,B,C,D\}\]
 \[f \mapsto {A}\quad (\text{resp.} {B},{C}, {D}.)\]

Precisely, the rules of the coloring are given below: 
\begin{itemize}
\item For any $e_R\in E$, $\partial_{1}^{-1}(e_{R})$ is $\{A,D\}$ or $\{B,C\}$.
\item For any $e_B\in E$, $\partial_{1}^{-1}(e_{B})$ is $\{A,B\}$ or $\{C,D\}$. 
\item For any $v_1\in V_{roots}$, $\partial_{1}^{-1}(v_1)=\{A,B,C,D\}$, $val(v_1)=4k$, $k\in \mathbb{N}^{*}$.
\item There exist $4n$ leaves. 
\item For any $v_2 \in V_{crit}$, such that $v_2\not\in V_{roots}$, we have: $\partial_{1}^{-1}(v_2)=\{B,C\}$ or $\{A,D\}$, if edges incident to $v_2$ are colored $R$. 
\item $\partial_{1}^{-1}(v_2)=\{A,B\}$ or $\{C,D\}$, if edges incident to $v_2$ are colored $B$.
\item For any $v_2 \in V_{crit}$, such that $v_2\not\in V_{roots}$: $4 \leq val(v_2)\leq 2n$. 
\end{itemize}

We call those graphs {\it generic} if $V_{crit}$ is empty. Tails of the vertices in $V_{roots}$ are formed from $4k$ half-segments of alternating colors. Tails for vertices of $V_{crit}$ are formed from $2m$ half-segments, where $m\geq2$, and those half-edges are decorated by one fixed color.

The relation between patterns and Gauss-skizze can be summarized in the following diagram. 

\medskip

\begin{lem}
 Let $\sqcup: \Rp_n\times \Bp_n\to \Pa_n$ be a disjoint union operation on the space of patterns (i.e. a superimposition of patterns) associated to the space of degree $n$ polynomials. 
\[\begin{tikzcd}
\Bp_n \arrow[dr] &\arrow[r] \arrow[d,"h"] \Bp_n \bigsqcup \Rp_n \arrow[l] & \Rp_n\arrow[dl] \\
 &  \GS_n & 
\end{tikzcd}\]
Then, $h$ is an injection. 
\end{lem}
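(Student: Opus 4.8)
The plan is to prove injectivity of $h$ by exhibiting an explicit left inverse built from the two-coloring; once such a retraction is produced, injectivity is formal.

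First I would unwind what $h$ does. An element of the domain is a pair $(\beta,\rho)$ consisting of a blue $B$-pattern $\beta=\{ReP=0\}\in\Bp_n$ and a red $R$-pattern $\rho=\{ImP=0\}\in\Rp_n$, and $h(\beta,\rho)$ is their \emph{colored} superimposition $\beta\cup\rho$, regarded as a bi-colored embedded forest in $\GS_n$: the edges coming from $\beta$ retain the decoration $B$ and those coming from $\rho$ retain the decoration $R$. This is exactly the refinement of the uncolored operation $\sqcup\colon\Rp_n\times\Bp_n\to\Pa_n$ that remembers which curves are blue and which are red, the forgetting of this data being precisely what distinguishes $\Pa_n$ from $\GS_n$.

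Next I would define the color-extraction maps $\pi_B\colon\GS_n\to\Bp_n$ and $\pi_R\colon\GS_n\to\Rp_n$. Given a Gauss-skizze $\gamma$, let $\pi_B(\gamma)$ be the monochromatic sub-forest obtained by keeping only the edges decorated $B$ together with the vertices and the inherited $2$-cell coloring, and symmetrically for $\pi_R$. The point requiring care is well-definedness: one must check that the blue part of a genuine Gauss-skizze is again a $B$-pattern (and dually). This is governed by the coloring rules of Definition~\ref{D:1}. At a root vertex $v_1\in V_{roots}$ four edges of alternating colors meet with $\partial_1^{-1}(v_1)=\{A,B,C,D\}$, so discarding the red edges leaves the blue curve passing through smoothly (or a blue crossing of even valency when $\mathrm{val}(v_1)=4k$, $k>1$); at a monochromatic critical vertex of the opposite color, discarding that color removes the vertex altogether, leaving the blue forest untouched. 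Combined with the harmonic-polynomial description recalled before the definition (zero sets of $ReP$ and $ImP$ are embedded forests with even interior valencies), this shows $\pi_B(\gamma)$ is an embedded forest with exactly $2n$ leaves — half of the $4n$ leaves of $\gamma$ — hence a valid $B$-pattern, and likewise $\pi_R(\gamma)\in\Rp_n$.

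Finally I would verify the identities $\pi_B\circ h=\mathrm{pr}_{\Bp_n}$ and $\pi_R\circ h=\mathrm{pr}_{\Rp_n}$: extracting the blue part of $\beta\cup\rho$ returns exactly $\beta$, because superimposition neither creates nor destroys blue edges and only inserts root vertices at blue–red crossings, which are reabsorbed as interior points of the blue curves upon forgetting red. Thus $(\pi_B,\pi_R)\circ h=\mathrm{id}_{\Bp_n\times\Rp_n}$, so $h$ has a left inverse and is injective. The main obstacle is the well-definedness of $\pi_B,\pi_R$ at the crossing vertices — confirming that deleting one color yields a genuine single-color pattern of the correct combinatorial type rather than a defective forest — and this is precisely where the coloring rules for $V_{roots}$ (alternating colors) and $V_{crit}$ (a single color) do the essential work; the remainder is bookkeeping showing that superimposition adds only blue–red intersection data already encoded by the two monochromatic patterns.
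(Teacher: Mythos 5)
Your proposal is correct under the natural reading of the statement, but it takes a genuinely different route from the paper. You prove injectivity positively, by constructing a left inverse: the color-extraction maps $\pi_B,\pi_R$ recovering the monochromatic level sets $\{ReP=0\}$ and $\{ImP=0\}$ from a bi-colored Gauss-skizze, so that $(\pi_B,\pi_R)\circ h$ is the identity and injectivity is formal. (Your careful vertex-by-vertex check of well-definedness at roots and monochromatic critical points is more than strictly needed: by definition every element of $\GS_n$ is the colored union, under a single polynomial $P$, of a genuine $B$-pattern and a genuine $R$-pattern, so $\pi_B$ and $\pi_R$ are well defined outright.) The paper's proof is instead a one-sentence remark pointing in the opposite direction: it observes that not every combination of an $R$-pattern with a $B$-pattern superimposes to a Gauss-skizze, because the Cauchy--Riemann equations force a compatibility between the two level sets and the resulting union must stay acyclic (a forest). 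In effect the paper treats injectivity itself as immediate --- a bi-colored forest remembers its two monochromatic parts --- and spends its proof explaining why $h$ is \emph{only} an injection and not a bijection, i.e.\ why its domain (or image) is proper. Your argument supplies exactly the detail the paper leaves implicit, while leaving implicit exactly the point the paper emphasizes: $h$ is defined only on the compatible pairs cut out by the Cauchy--Riemann and forest conditions, a domain restriction that is harmless for your retraction argument but is, in the paper's view, the actual content of the lemma.
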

\begin{proof}
Indeed, not all combinations of $R$-patterns with $B$-patterns give a Gauss-skizze. The reason is that the Cauchy--Riemann equations must be satisfied and that the resulting graph must contain no cycles (it is a forest). 
\end{proof}
\smallskip 

The {\it geometric realisation} $|\sigma^n|$ of a Gauss-graph $\sigma^n$ refers to a Gauss-skizze, i.e. a pre-image of $\mathbb{C}$ under a polynomial, where the $4n$ leaves are glued at infinity. 
When the context is clear instead of referring to degree $n$ Gauss-graphs, we will simply refer to Gauss graphs. 

\medskip

\subsection*{Morphisms between Gauss-graphs}
Let us consider $\sigma$ and $\tau$ two forests in $({\bf TrCoGr},\sqcup)$, with the same number of leaves.
Degeneration morphisms (i.e. morphisms which are not strict morphisms) are done on the graphs (and therefore their geometric realisations), using so-called {\it Whitehead moves} (WH moves).
\medskip

\begin{defn}\label{D:WH}
A {\bf contracting Whitehead} move $WH_-$, in a graph in $({\bf TrCoGr},\sqcup)$ or $({\bf DTrCoGr},\sqcup)$ is a composition of a ghost morphism with a contracting morphism on a forest (or tree).
\end{defn}
There are three cases to discuss. 
\begin{enumerate}
\item Case 1: Partial contraction on a forest. 
\begin{itemize}
\item STEP [Ghost graph]: Add a $k$-gon within a 2-cell in the set $F_{\sigma}$, such that each vertex is mapped onto the interior of an edge, lying ins the boundary of $F_{\sigma}$. Edges are of the same color. 
\item STEP [Contraction morphism]: The $k$-gon is shrinked to one vertex. 
\end{itemize}
\item Case 2: Partial contraction on a tree. This is the case where we consider two vertices in $V_{crit}$. 
There is no ghost graph. The edge connecting these two vertices is shrinked onto one vertex. 
\item Case 3: Total contraction (only for Gauss-graphs). 
Here, the ghost graph is such that the vertices of this $k$-gon coincide with the inner nodes of $V_{roots}$ lying in the boundary of $F_{\sigma}$. The $k$-gon is then shrinked to one vertex. 
\end{enumerate}
\medskip

\begin{defn}
An {\bf expanding Whitehead} move $WH_{+}$, is the inverse procedure of a contracting $WH_{-}$ move. 
\end{defn}

There is a strong relationship  between the Gauss-skizze and the Gauss-graphs. 
\begin{thm}
	There is a bijection between the set of Gauss-skizze and the set of Gauss-graphs. 
\end{thm}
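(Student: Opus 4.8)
The plan is to construct explicit maps in both directions between the set of Gauss-skizze and the set of Gauss-graphs and to verify that they are mutually inverse. In one direction, the passage from a Gauss-skizze to its Gauss-graph is essentially the content of Definition~\ref{D:1}: given a bi-colored embedded forest $S$ that is the inverse image under a degree $n$ polynomial $P$ of the real and imaginary axes, I would read off the combinatorial sextuple $(V_\sigma, E_\sigma, F_\sigma, \partial_1, \partial_2, n)$. First I would set $V_\sigma = V_{roots}\cup V_{crit}\cup\{*\}$, where $V_{roots}$ is the set of intersection points of a red and a blue curve and $V_{crit}$ is the set of mono-chromatic intersection points; the edges $E_\sigma$ are the arcs of $S$ between consecutive vertices (colored $R$ or $B$ according to whether they lie in $P^{-1}(i\mathbb{R})$ or $P^{-1}(\mathbb{R})$), and the $2$-cells $F_\sigma$ are the connected regions of $\mathbb{C}\setminus S$, colored by $A,B,C,D$ according to which quadrant's preimage they are. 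The boundary maps $\partial_1,\partial_2$ are induced by incidence in the plane. I would then check that this assignment lands in the prescribed combinatorial type, i.e. that the valency conditions ($val(v_1)=4k$ for roots, $4\le val(v_2)\le 2n$ for critical points), the $4n$-leaves condition, and the coloring rules listed after Definition~\ref{D:1} are all forced by the geometry of level curves of a harmonic polynomial (using the cited result of~\cite{ErJaNa07} that these level sets are embedded forests with exactly $2n$ leaves and even interior valencies). Call this map $\Gamma:\GS_n\to\GG_n$.

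For the reverse direction, the key input is the \emph{geometric realisation} $|\sigma^n|$ described just before the statement: given a Gauss-graph $\sigma^n$, one embeds the abstract decorated planar forest in $\mathbb{C}$ respecting the cyclic order at each vertex and the asymptotic directions $\tfrac{2k\pi}{4n}$, gluing the $4n$ leaves at infinity. I would argue that the decoration data (the coloring of edges by $\{R,B\}$ and of faces by $\{A,B,C,D\}$, subject to the compatibility rules) determines a bi-colored embedded forest up to the ambient isotopy of the plane that fixes the asymptotic sectors, hence a well-defined Gauss-skizze. This defines a map $|\cdot|:\GG_n\to\GS_n$. The two composites are then checked to be identities: $\Gamma\circ|\cdot|=\mathrm{id}_{\GG_n}$ because reading the combinatorics back off a realisation recovers exactly the sets and maps one started with (the cyclic orders and colorings are preserved by construction), and $|\cdot|\circ\Gamma=\mathrm{id}_{\GS_n}$ because a Gauss-skizze is itself an embedded forest and re-embedding its combinatorial type with the correct asymptotic data recovers the original curve up to the isotopy we mod out by.

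The main obstacle I expect is the \emph{well-definedness and uniqueness} of the geometric realisation: showing that the combinatorial data of a Gauss-graph pins down a genuine Gauss-skizze and not merely an abstract colored graph. Concretely, one must ensure that the planar embedding dictated by the cyclic orders, the alternation of colors around root-vertices (tails of $4k$ half-segments of alternating colors) and around critical vertices (tails of $2m$ mono-chromatic half-segments), together with the fixed asymptotic directions at infinity, admits an embedding realizable as a level set of an actual degree $n$ polynomial. Here the constraint that the resulting curves satisfy the Cauchy--Riemann equations and contain no cycles---exactly the obstruction highlighted in the proof of the preceding Lemma, where $h$ was shown merely injective and \emph{not} surjective---means that not every superimposition of an $R$-pattern and a $B$-pattern is admissible. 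The honest way to handle this is to work within $\GG_n$ as defined, i.e. to take as given that a Gauss-graph already encodes the combinatorial type of \emph{some} genuine Gauss-skizze, so that the bijection is really a statement that the combinatorial encoding loses no information rather than a realizability theorem; I would make this dependence explicit and reduce the claim to the faithfulness of the encoding $\Gamma$ and the fact that an embedded forest in $\mathbb{C}$ with fixed behavior at infinity is determined up to isotopy by its combinatorial incidence and coloring data.
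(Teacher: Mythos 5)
Your construction of the encoding map $\Gamma:\GS_n\to\GG_n$ and your verification of its faithfulness are fine, and indeed more explicit than anything in the paper: the paper's entire proof is a one-line citation to Theorem~1.1 of~\cite{Ac17}. The standard fact you invoke for injectivity---that a properly embedded forest in $\C$ with prescribed cyclic orders at vertices and prescribed asymptotic directions at infinity is determined up to ambient isotopy by its incidence and coloring data---is correct, provided one reads ``the set of Gauss-skizze'' as isotopy classes, which you rightly make explicit where the paper does not.

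The genuine gap is in the direction you chose to sidestep. The substantive content of the theorem, and precisely what A'Campo's Theorem~1.1 supplies, is \emph{realizability}: every abstract decorated planar forest satisfying the combinatorial axioms of Definition~\ref{D:1} (valencies $4k$ at roots, even monochromatic valencies at critical vertices, $4n$ leaves, the $\{A,B,C,D\}$ face rules) is actually the level-set picture of some degree~$n$ polynomial. Your move of ``working within $\GG_n$ as defined,'' i.e.\ taking a Gauss-graph to be by fiat the code of some genuine Gauss-skizze, is not available, because the rest of the paper treats Gauss-graphs as intrinsically defined objects of $({\bf TrCoGr},\sqcup)$ subject only to the listed combinatorial rules---for instance, the poset of $n$-Gauss-graphs and the Whitehead-move arguments (Proposition~\ref{P:up}, the stratification theorems) quantify over all such graphs, and would be circular or vacuous if membership in $\GG_n$ presupposed realization by a polynomial. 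Under that reading, surjectivity of $\Gamma$ is a nontrivial existence theorem, not a bookkeeping statement; its proof (in~\cite{Ac17}, and in the closely related Theorem~3 of~\cite{ErJaNa07}, whose indexing-and-ramified-covering construction this paper itself recycles in the proof of Theorem~\ref{P:germ}) proceeds by building from the combinatorial data a ramified covering of $\mathbb{R}\cup\{\infty\}$ and invoking a Riemann-existence/uniformization argument to produce the polynomial. You correctly identified this as the obstacle---the same Cauchy--Riemann/no-cycle obstruction that makes $h$ merely injective in the preceding Lemma---but identifying it and then redefining it away does not prove the theorem as the paper intends it; the realizability half must either be proved or, as the paper does, explicitly outsourced to~\cite{Ac17}.
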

The proof of this theorem is due to Theorem 1.1~\cite{Ac17}.

\medskip 

\section{Topological stratification of $\overline{\textup{Conf}}_n(\C)$ and Saito's Frobenius manifold}
A {\it poset} (partially ordered set) is a set $\mathcal{S}$ endowed with a binary relation $\prec$
which is reflexive, transitive and anti-symmetric. 

\medskip

\begin{defn}\label{D:closure}
Consider graphs being forests. 
Let $\sigma \prec \tau$ whenever $\tau$ can be obtained from $\sigma$ by a sequence of repeated contracting Whitehead moves i.e. when $\tau$ is incident to $\sigma$. 
We call the union $\overline{\sigma}=\{\tau : \sigma \prec \tau\}$ the combinatorial closure of $\sigma$. 
\end{defn}

\medskip

\begin{lem}
The set of $n$-Gauss graphs $\GG_n$ endowed with the relation $\preceq$ forms a partially ordered set. 
\end{lem}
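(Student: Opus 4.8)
The plan is to verify directly that the relation $\preceq$ satisfies the three defining axioms of a partial order on the set $\GG_n$: reflexivity, transitivity, and antisymmetry. The relation is defined by $\sigma \prec \tau$ whenever $\tau$ is obtained from $\sigma$ by a sequence of contracting Whitehead moves (Definition \ref{D:closure}), and $\preceq$ is the associated reflexive closure. First I would set up the framework by recalling that each $WH_-$ move is a composition of a ghost morphism with a contracting morphism (Definition \ref{D:WH}), so that it strictly decreases a suitable combinatorial invariant of the Gauss-graph, for instance the total number of interior edges (equivalently, the number of internal edges plus the count recorded by the cardinalities $Card(V_{roots})$ and $Card(V_{crit})$ in Definition \ref{D:1}). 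This monotonicity is the crucial bookkeeping device: a contraction either shrinks a $k$-gon to a vertex or collapses an internal edge, in each of the three cases listed, and in every case the number of edges strictly drops.

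Reflexivity I would handle by convention: the empty sequence of Whitehead moves takes $\sigma$ to itself, so $\sigma \preceq \sigma$. Transitivity is essentially immediate from the definition, since if $\tau$ is reached from $\sigma$ by one sequence of contracting moves and $\rho$ is reached from $\tau$ by another, the concatenation of the two sequences reaches $\rho$ from $\sigma$, giving $\sigma \preceq \rho$. The content here is merely that composability of the underlying morphisms in $({\bf TrCoGr},\sqcup)$ is guaranteed by the Proposition stating that the composition of a ghost operation with a classical graph morphism is a well-defined morphism.

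The antisymmetry axiom is where the real argument lies, and I expect this to be the main obstacle. I would argue by the strict monotonicity observed above: each nontrivial $WH_-$ move strictly decreases the edge-count invariant $N(\sigma) = Card(E_\sigma \setminus i_\sigma(V_\sigma))$ (the number of interior edges). Hence if $\sigma \preceq \tau$ with $\sigma \neq \tau$, then $N(\tau) < N(\sigma)$; and symmetrically $\tau \preceq \sigma$ with $\tau \neq \sigma$ would force $N(\sigma) < N(\tau)$. These two inequalities cannot hold simultaneously, so $\sigma \preceq \tau$ and $\tau \preceq \sigma$ together force $\sigma = \tau$. The delicate point to check carefully is that a contracting Whitehead move genuinely never increases $N$ and strictly decreases it unless it is the identity; this requires inspecting each of the three cases of Definition \ref{D:WH} and confirming that neither the ghost-edge insertion (which is only an intermediate step, not a morphism landing in a distinct object of the poset) nor the contraction can raise the interior-edge count. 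Once this invariant is shown to be strictly order-reversing under nontrivial moves, antisymmetry follows formally and the three axioms are established, proving that $(\GG_n, \preceq)$ is a poset.
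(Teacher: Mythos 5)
Your reflexivity and transitivity arguments are fine and coincide with the paper's (empty sequence, concatenation of sequences). The genuine gap is in your antisymmetry argument: the invariant you propose, the interior-edge count $N(\sigma)$, is \emph{not} strictly decreasing under contracting Whitehead moves --- in Case 1 of Definition~\ref{D:WH} it strictly \emph{increases}, and in Case 3 it can stay constant. Concretely, in Case 1 each of the $k$ edges carrying a vertex of the ghost $k$-gon gets subdivided by that vertex, and shrinking the $k$-gon merges the $k$ subdivision points into a single new critical vertex: the resulting graph has $k$ \emph{more} edges and one more vertex than before. (For example, degenerating a generic cubic so that two same-colored branches touch at a critical point turns three disjoint crossings with $12$ edges into a forest with $14$ edges and an extra vertex in $V_{crit}$.) In Case 3, merging the two valency-$4$ root vertices of a generic quadratic into a single valency-$8$ vertex leaves the edge count at $8$. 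So the pair of inequalities $N(\tau)<N(\sigma)$ and $N(\sigma)<N(\tau)$ that your antisymmetry argument rests on never materializes, and the ``delicate point'' you flagged --- checking monotonicity case by case --- in fact fails at exactly that check.

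The approach itself is the right one, and indeed more honest than the paper's own proof, which takes the same route but simply asserts irreversibility (``it is impossible that $\sigma \prec \tau$ implies $\tau \prec \sigma$'') without exhibiting any monotone quantity. The fix is to choose a correct invariant. Since Gauss-graphs are forests, the $k$ edges (resp.\ root vertices) joined by a ghost polygon in Cases 1 and 3 must lie in pairwise distinct trees --- otherwise the contraction would create a cycle, violating acyclicity --- so these moves strictly decrease the number of connected components by $k-1\geq 1$; Case 2 preserves components but deletes one edge. Hence the lexicographic pair (number of trees, number of edges) strictly decreases under every nontrivial $WH_-$ move, and your antisymmetry argument goes through verbatim with this invariant in place of $N$. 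Alternatively, one can invoke the dimension of the associated stratum of $\overline{\textup{Conf}}_n(\C)$, which strictly drops under each degeneration; either choice closes the gap.
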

\begin{proof}
The relation is clearly reflexive. If $\sigma \prec \tau$, then $\tau$ is obtained from a sequence of contracting $WH_-$ moves from $\sigma$. In particular, it is impossible that $\sigma \prec \tau$ implies $\tau \prec \sigma$. So anti-reflexivity holds. As for transitivity, if $\sigma \prec \tau$ and $\tau\prec \mu$ then, $\tau$ is obtained from $\sigma$ by contracting $WH_-$ moves. Similarly, $\mu$ is obtained from $\tau$ by contracting $WH_-$ moves. So, we have $\sigma\prec\mu$.
\end{proof}

Let $\tilde{P}$ denote the full subcategory of the category of all small categories consisting of posets. Here the posets are viewed as categories in the standard way, i.e., with elements being the objects (Gauss-graphs) and order-relations ($'\prec'$) being morphisms. 

\subsection{Stratification of $\overline{\textup{Conf}}_n(\C)$}
Let ${\bf Top}$ denote the category of topological spaces and continuous maps.

We build the stratification of the space $X=\overline{\textup{Conf}}_n(\C)$. 
\medskip

\begin{defn}
Let $(S, \prec$), be a partially ordered set. A stratification of a topological space $X$ is a locally finite collection of disjoint, locally closed subsets of $X$ called strata $X_{\alpha}\subset X$ such that 
\begin{enumerate}
\item $X=\cup_{\alpha\in S}X_{\alpha}$ and 
\item $X_{\alpha}\cap \overline{X}_{\beta}\neq \emptyset \iff X_{\alpha}\subset \overline{X}_{\beta}\iff \alpha=\beta \wedge \alpha\prec \beta$
\end{enumerate}
\end{defn}

The boundary condition can be reformulated as follows: the closure of a stratum is a union of strata. A different interpretation of this would be to define a stratification as a filtration
 \[\emptyset=X^{-1}\subset X^{(0)}\subset\dots\subset X^{(n-1)}\subset X^{(n)}=X,\]
where the $X_{(\kappa)}$ are closed, and where by defining $X^{(\kappa)}:=X_{(\kappa)}\setminus X_{(\kappa-1)}$, we have $\overline{X^{(\kappa)}}=X_{(\kappa)}$.

\smallskip

A set is semi-algebraic if it can be written as a a boolean combination of polynomial equations and polynomial inequalities with real coefficients. A stratification of a set is semi-algebraic whenever it is decomposed into a union of semi-algebraic sets. 

\medskip

\begin{prop}
The Gauss-skizze decomposition of $\overline{\textup{Conf}}_n(\C)$ is a semi-algebraic stratification.
\end{prop}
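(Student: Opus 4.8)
The plan is to exhibit each stratum of the Gauss-skizze decomposition explicitly as a semi-algebraic subset of $\overline{\textup{Conf}}_n(\C)$ and then to verify the two axioms in the definition of a stratification, taking the poset $(\GG_n,\prec)$ of Gauss-graphs as the indexing set $S$. Concretely, for a Gauss-graph $\sigma\in\GG_n$ define the stratum $X_\sigma$ to be the set of all configurations (equivalently, monic degree-$n$ polynomials $P$ with distinct roots) whose associated Gauss-skizze realizes exactly the combinatorial type $\sigma$. By the bijection between Gauss-skizze and Gauss-graphs (the Theorem quoted just above, via Theorem 1.1 of~\cite{Ac17}) this is well defined, and $\bigcup_{\sigma\in\GG_n}X_\sigma=X$ because every configuration produces some Gauss-skizze; this is axiom (1). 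The finiteness of $\GG_n$ (bounded valencies $4\le val(v_2)\le 2n$, exactly $4n$ leaves, $Card(V_{crit})\le n-1$) gives local finiteness for free.

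First I would write the coloring and incidence data of $\sigma$ as a finite conjunction of real polynomial conditions on the coefficients $(a_0,\dots,a_{n-1})$ of $P$, or equivalently on the real and imaginary parts of the roots. The key observation is that the combinatorial type of the Gauss-skizze is determined by finitely many \emph{real-algebraic} incidence facts: whether two distinct roots or critical points coincide in one coordinate, the sign of $Re\,P$ and $Im\,P$ on the faces colored $A,B,C,D$, and which critical points $z_0$ satisfy $P'(z_0)=0$ together with $Re\,P(z_0)=0$ (resp.\ $Im\,P(z_0)=0$). Each of these is a polynomial equation or inequality with real coefficients in the real and imaginary parts of the coefficients of $P$: the discriminant and resultant conditions are polynomial, $P'(z_0)=0$ is polynomial, and the separating sign conditions that fix a face coloring are polynomial inequalities. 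A boolean combination of these is by definition semi-algebraic, so each $X_\sigma$ is semi-algebraic, and hence each is locally closed (the generic open stratum where $V_{crit}=\emptyset$ is cut out by strict inequalities, the degenerate strata by adding equalities).

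Next I would prove the closure axiom (2), namely $X_\alpha\cap\overline{X_\beta}\ne\emptyset\iff X_\alpha\subset\overline{X_\beta}\iff \alpha\prec\beta$. The geometric content is that passing to a point of $\overline{X_\beta}$ corresponds precisely to degenerating the skizze of type $\beta$, i.e.\ to performing a sequence of contracting Whitehead moves $WH_-$ (Definition~\ref{D:WH}): collapsing a $k$-gon or an internal edge merges roots and/or critical points, which is exactly the limiting behavior as coefficients vary so that two intersections coalesce. Thus the combinatorial closure $\overline\sigma=\{\tau:\sigma\prec\tau\}$ of Definition~\ref{D:closure} matches the topological closure stratum-wise, and the partial-order structure established in the preceding Lemma guarantees the equivalences. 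I would argue this by showing that a configuration in $\overline{X_\beta}$ arises as a limit of type-$\beta$ configurations, and that the Gauss-skizze of such a limit is obtained from $\beta$ by contractions, hence has a type $\alpha$ with $\alpha\prec\beta$; the converse inclusion follows by constructing, for each $WH_-$ move, an explicit one-parameter family of polynomials degenerating $\beta$ to $\alpha$.

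The main obstacle I expect is the closure axiom, specifically the equivalence $X_\alpha\cap\overline{X_\beta}\ne\emptyset\Rightarrow X_\alpha\subset\overline{X_\beta}$: one must rule out \emph{partial} incidence, where a stratum meets the closure of another without being contained in it. This is where I would lean on the Fulton--MacPherson compactification, in which the boundary strata are organized by the relative-distance data $\delta_{a,b,c}$ and directions $\theta_{a,b}$ introduced in Section~\ref{S:conf}; the Proposition characterizing boundary points by the relation $\tilde x(a)\cong\tilde x(b)\ \mathrm{rel}\ \tilde x(c)$ ensures that degenerations happen along the clean, semi-algebraically defined loci that correspond exactly to Whitehead moves, so no partial incidence can occur. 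The curve selection lemma for semi-algebraic sets (available since every $X_\sigma$ is semi-algebraic) then upgrades the nonempty-intersection hypothesis to an actual degenerating path, closing the argument.
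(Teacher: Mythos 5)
Your overall plan is reasonable, but there is a genuine gap at the heart of your step two, the semi-algebraicity of the individual strata. The conditions you list---coincidences of roots or critical points, $P'(z_0)=0$ together with $\mathrm{Re}\,P(z_0)=0$ or $\mathrm{Im}\,P(z_0)=0$, and sign conditions on the coefficients---do \emph{not} determine the combinatorial type of the Gauss-skizze. Every generic polynomial (one with $V_{crit}=\emptyset$) satisfies exactly the same boolean combination of such incidence and sign conditions, yet the generic locus splits into many distinct strata, one for each generic Gauss-graph: what separates them is the connectivity pattern of the level curves (which asymptotic directions are joined by which embedded arc), a topological datum that is not literally a polynomial equality or inequality in the coefficients. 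So the formulas you propose cut out finite unions of strata, not strata. The repair---and this is in effect all the paper's own proof does, which consists of the single observation that the loci $\{\mathrm{Re}\,P(z_i)=0,\ P'(z_i)=0\}$ and $\{\mathrm{Im}\,P(w_k)=0,\ P'(w_k)=0\}$ are semi-algebraic---is to define each stratum as a \emph{connected component} of such a semi-algebraic locus (the paper does this explicitly in its later section, where a stratum $\mathcal{A}_\sigma$ is defined as a connected component of the set of polynomials with Gauss-skizze of type $\sigma$) and then invoke the standard fact that connected components of a semi-algebraic set are again semi-algebraic. With that extra step your conclusion holds; without it, the claim that each $X_\sigma$ is semi-algebraic is unproved.

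Your treatment of the frontier axiom also leans on the wrong compactification data. The Fulton--MacPherson boundary characterization via $\tilde{x}(a)\cong \tilde{x}(b)\ \mathrm{rel}\ \tilde{x}(c)$ records collisions of the marked points, i.e.\ of the \emph{roots}; but the degenerations realized by contracting Whitehead moves occur when \emph{critical values} cross the real or imaginary axis while all roots stay distinct---these happen in the interior of $\textup{Conf}_n(\C)$, where the relative-distance data $\delta_{a,b,c}$ sees nothing, so that Proposition cannot rule out partial incidence. Note also that the paper's proposition, and its one-line proof, only asserts and verifies semi-algebraicity; the frontier and local-cone structure is deferred to the later theorem that the decomposition is a topological stratification, proved there via versal deformations of germs and Milnor's cone theorem rather than via Fulton--MacPherson boundary data. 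Your curve-selection idea is a sensible tool for producing degenerating paths once semi-algebraicity is established, but the mechanism you propose for excluding partial incidence would not close that step.
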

\begin{proof}
Indeed, the stratification is indexed by the union of the following semi-algebraic sets: $\{(z_1,...,z_i)\in \C^i: ReP(z_i)=0,\quad P'(z_i)=0\}$ and $\{(w_1,...,w_k)\in \C^k: ImP(z_i)=0,\quad P'(w_k)\}$. 
\end{proof}
\medskip

\subsection{Frobenius manifolds}
We start with reminding some basic notation and facts from~\cite{Ma99}. Let us recall that an affine flat structure on the (super)manifold $\Mm$ is a subsheaf $T^f_{\Mm}\subset T_{\Mm}$ of linear spaces of pairwise (super)commuting vector fields, such that $T_{\Mm}=O_{\Mm} \otimes T^f_{\Mm}$. Sections of $T^f_\Mm$ are called flat vector fields. 

\smallskip

A {\it Frobenius manifold} is a quadruple $(\Mm,T^{f}_{\Mm}, g, A)$, where $\Mm$ is a supermanifold in one of the (classical) categories; $g$ is a flat Riemannian metric compatible with the structure $T^{f}_{\Mm}$; $A$ is an even symmetric tensor $A: S^{3}(T_{\Mm})\to O_{\Mm}$. 

\smallskip

This data must satisfy the following conditions	:
 
 \begin{itemize}
 \item Potentiality of $A$. There exists a function $\Phi$ verifying, for any flat vector fields $X,Y,Z$:
 \begin{equation}\label{E:A1}A(X, Y, Z) = (XYZ) \Phi.\end{equation}
 \item Associativity. The symmetric tensor $A$ together with the flat Riemannian metric $g,$ define a unique symmetric multiplication $\circ : T_M \otimes T_M \to T_M$ such that : 
 \begin{equation}\label{E:A2} A(X,Y,Z)= g(X\circ Y, Z)= g(X,Y\circ Z). \end{equation} This multiplication must be associative.
 \end{itemize}
\smallskip

In the analytic case, an affine flat structure can be described by a complete atlas whose transition functions are affine linear, one can find local coordinates $x_{a}$ such that $e_{a}=\frac{\partial}{\partial x^a}=\partial_{a}$ are commuting sets of linear independent vector fields. 
 In flat local coordinates, the equation~\ref{E:A1}, becomes $A_{abc}= \partial_{a} \partial_{b} \partial_{c}\Phi$ and equation~\ref{E:A2}, is rewritten as:
 
 \[\partial_a \circ \partial_b=\sum_{c} A_{ab}^c\partial_c,\]
where $A_{ab}^c:= \sum_{e} A_{abe}g^{ec}, (g^{ab}):=(g_{ab})^{-1}.$

\smallskip

$\Mm$ is called {\it semisimple} if an isomorphism of the sheaves of $O_\Mm$-algebras:
\[(T_\Mm, \circ)\to (O^n_\Mm, \text{componentwise  multiplication})\] exists everywhere, locally.

 The semisimplicity of the Frobenius manifold implies that, in the local basis $(e_1,...,e_n)$ of $T_\Mm$, the multiplication takes the form:
 \[(\sum f_i e_i)\circ(\sum g_j e_j)=\sum f_i g_i e_i,\]
 and in particular
 \[e_i\circ e_j = \delta_{ij}e_j.\]

In the dual basis, the metric ( $g(e_i,e_k)=\delta_{ik}g_{ii}$) is diagonal, and $A$ also.

Moreover, we can introduce a local function (metric potential) $\eta$, defined up to addition of a constant, such that: $g_{ii}=e_i\eta$.

\medskip

\subsection{The space of polynomials}
The space of complex polynomials is an example of semisimple Frobenius manifolds of arbitrary dimension (the construction is due to Dubrovin~\cite{Du} and Saito~\cite{Sa82}).
Consider the $n$-dimensional affine space $\mathbb{A}^{n}$ with coordinate functions $a_1, ..., a_n$. 
We identify the space $\mathbb{A}^{n}$ with the space of degree $n+1$ polynomials $P(z)=z^{n+1}+a_1z^{n-1}+\dots + a_{n}$. 
Let \[\pi: \tilde{\mathbb{A}}^{n} \to \mathbb{A}^{n}\] be a covering space of degree $n!$ whose fiber over a point $P(z)$ consists of the total orderings of the roots of $P'(z)$. In other words, $\tilde{\mathbb{A}}^{n}$ supports functions $\rho_1,...,\rho_n$ such that: 

\[\pi^*(P'(z))= (n+1)\prod_{i=1}^{n}(z-\rho_i);\]

\[\pi^{*}(a_i)=(-1)^{i+1}\frac{n+1}{n-i}\sigma_{i+1}(\rho_1,...,\rho_n), i=1, ...,n-1\]
and $\sigma_1(\rho_1,...,\rho_n)= \rho_1+ ...+\rho_n=0$. We will omit $\pi^*$ in the notation of lifted functions.

Let $ \Mm \subset \tilde{\mathbb{A}}^{n}$ be the open dense subspace in which : 
\begin{enumerate}
\item $\forall i$, $P''(z_i) \neq 0$ that is $\rho_i \neq \rho_j$ for $i\neq j$. 
\item $u^{i}:= P(\rho_i)$ form local coordinates at any point. 
\end{enumerate}

\vskip.1cm 

$\Mm$ is a {\it semisimple Frobenius manifold} with the following structure data: 

\begin{enumerate}
\item The $n$-tuple $(u^i)$, generate the canonical coordinates, the basis is given by $e_i=\frac{\partial}{\partial u^i}$, and the dual basis is generated by $(du^i)$. 
\item Flat metric \[g=\sum \frac{(du^i)^2}{P''(\rho_i)}\] 
with metric potential \[\eta=\frac{a_1}{n+1}= \frac{1}{n-1}\sum_{i<j} \rho_i\rho_j= \frac{1}{2(n-1)}\sum \rho_i^2.\] 
\end{enumerate}
 
\medskip
\begin{thm}
The Saito semisimple Frobenius manifold, has a cell decomposition, induced from the Gauss-skizze.
\end{thm}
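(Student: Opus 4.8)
The plan is to treat the Saito manifold $\Mm$ as a space of polynomials carrying its own Gauss-skizze stratification, to use the canonical coordinates to exhibit that stratification as semi-algebraic, and then to deduce the cell decomposition from the results of Section~4, the only delicate point being the behaviour along the boundary and the non-semisimple locus.

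First I would make the link between the strata and the Frobenius data precise. On $\Mm$ the canonical coordinates are the critical values $u^i=P(\rho_i)$; they form local coordinates at every point and, semisimplicity of $(T_\Mm,\circ)$ being equivalent to their pairwise distinctness, the assignment $P\mapsto(u^1,\dots,u^n)$ is (after composing with the $n!$-fold covering $\pi$ that orders the critical points) a local diffeomorphism of $\Mm$ into the configuration space of critical values. By the very definition of $V_{crit}$, a critical point $\rho_i$ produces a vertex of $V_{crit}$ exactly when its critical value sits on a coordinate axis, that is when $\mathrm{Re}\,u^i=0$ or $\mathrm{Im}\,u^i=0$. Writing $u^i=x_i+i\,y_i$, the part of the Gauss-skizze stratification governed by $V_{crit}$ is therefore modelled, in these coordinates, on the real coordinate hyperplane arrangement $\bigcup_i\{x_i=0\}\cup\bigcup_i\{y_i=0\}$ in $\C^n\cong\Rr^{2n}$, whose faces already form a regular cell complex: the generic cells, where $V_{crit}=\emptyset$, are the open products of quadrants of the $u^i$-planes, and a codimension-$k$ cell is where exactly $k$ critical values are pinned to an axis.

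The conditions coming from $V_{roots}$ (a root of $P$ meeting an axis, or two roots colliding) are likewise real-algebraic in the coordinates $u^i$, through the local inverse of the map $P\mapsto(u^i)$, so the \emph{full} decomposition of $\Mm$ by Gauss-graph type is semi-algebraic. At this point I would invoke the earlier results: the Proposition of Section~4 shows the Gauss-skizze decomposition of $\overline{\textup{Conf}}_n(\C)$ is a semi-algebraic stratification and, by the refinement proved there, a cell decomposition; the bijection between $\GS_n$ and $\GG_n$ keeps the combinatorial labelling of cells intact; and the poset $(\GG_n,\prec)$, whose order relations are the contracting Whitehead moves, supplies the incidences between cells. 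Transporting this along the coordinates $u^i$ endows $\Mm$ with a semi-algebraic cell decomposition indexed by $\GG_n$, visibly induced from the Gauss-skizze.

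The step I expect to be the main obstacle is the compatibility of this decomposition with the boundary of $\Mm$ and with the non-semisimple locus. Leaving $\Mm$ happens either through a collision of critical points, $\rho_i=\rho_j$, where $\pi$ ramifies and $P''(\rho_i)=0$, or through a collision of critical values, $u^i=u^j$, where semisimplicity fails; on the Gauss-skizze side these two phenomena are precisely the \textbf{contracting Whitehead moves} that merge pairs of $V_{crit}$ vertices, together with the matching Fulton--MacPherson boundary strata. I would therefore have to prove that these analytic degenerations of the Frobenius structure, the caustic and the discriminant, coincide with the combinatorial degenerations recorded by $(\GG_n,\prec)$, so that the closure of each cell is a union of cells and the resulting complex is closure-finite. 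Once this identification of the caustic/discriminant with the boundary of the Gauss-skizze poset is in place, the decomposition transported in the previous paragraph is a genuine cell decomposition of the Saito semisimple Frobenius manifold induced from the Gauss-skizze, as claimed.
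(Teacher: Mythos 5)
Your proposal has two genuine gaps, and the second is the one that matters most. First, the circularity: you invoke ``the refinement proved there, a cell decomposition'' of $\overline{\textup{Conf}}_n(\C)$ from Section~4, but no such result exists prior to this theorem. The only thing established earlier is the Proposition that the Gauss-skizze decomposition is a \emph{semi-algebraic stratification}; the cell decomposition claim is precisely the statement you are being asked to prove, so it cannot be imported. Second, and relatedly, even granting everything else, a semi-algebraic stratification is not automatically a cell decomposition: one must exhibit each stratum as homeomorphic to an open ball via a characteristic map, and your argument never produces these. Your hyperplane-arrangement model $\bigcup_i\{x_i=0\}\cup\bigcup_i\{y_i=0\}$ in the coordinates $u^i=x_i+iy_i$ is a correct local picture of the coarse ``which critical values sit on an axis'' data (it does match the definition of $V_{crit}$), but the Gauss-skizze strata are finer than this arrangement --- they are connected components of loci of polynomials with a fixed Gauss-graph, which also records collisions of critical points and the combinatorics of how the curves attach --- so transporting the arrangement's faces along the local diffeomorphism $P\mapsto(u^1,\dots,u^n)$ does not by itself yield the claimed cells. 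Your closing paragraph concedes the remaining point (matching the caustic $\rho_i=\rho_j$ and the discriminant $u^i=u^j$ with the contracting Whitehead moves, needed for closure-finiteness) with ``I would therefore have to prove,'' so by your own account the proposal is a plan rather than a proof.

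For comparison, the paper's proof is a direct construction that supplies exactly the ingredient you are missing. It works in the covering space $\tilde{\mathbb{A}}^{n}$ of ordered critical points and defines, for each tuple $(m_1,\dots,m_k)$ with $m_1+\dots+m_k=n-1$, the set $e(m_1,\dots,m_k)$ of configurations $\rho_1,\dots,\rho_{n-1}$ lying on $k$ pairwise distinct lines (the red or blue curves), with $j_i$ groups of collided points on the $i$-th line and $q=j_1+\dots+j_k$. It then builds an explicit characteristic map $f\colon I^{q+k}=W_k\times W_{j_1}\times\cdots\times W_{j_k}\to\tilde{\mathbb{A}}^{n}$, where each $W_r$, the space of ordered rows $w_1\leq\cdots\leq w_r$ with entries in $\Rr\cup\{\infty\}$, is an $r$-cube, and checks that $f$ maps the interior of the cube homeomorphically onto $e(m_1,\dots,m_k)$, so each such set is a cell of dimension $q+k$ (in the spirit of Barannikov's decomposition for spaces of real polynomials). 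Note that this construction is indexed by collision data of critical points on the curves --- degenerations that in your coordinates live on the caustic, outside the locus where $P\mapsto(u^i)$ is a local diffeomorphism --- which is another reason your transport argument cannot reach the cells the paper actually uses. If you want to salvage your route, the honest version would be to carry out the hyperplane-arrangement analysis on the open generic part and then prove, rather than defer, that the closures of those pieces are parametrized by cubes of collision data; at that point you would essentially be reproducing the paper's characteristic maps.
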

\begin{proof}
Let $\tilde{\mathbb{A}}^{n}$ be the space of critical points of the space of polynomials of degree $n$. Let $m_1,\dots, m_k$ be a set of natural numbers, summing to $n-1$. We define $e(m_1,\dots, m_k)$ the subset of $ \tilde{\mathbb{A}}^{n}$ consisting of points $\rho_1,...,\rho_{n-1}$ such that they lie on $k$ pairwise distinct lines. On every line, the critical points collide (and thus are at the intersection of red or blue curves). On every $i$-th line, there exist $m_i$ critical points (not all distinct). Let us suppose that the $i$-th line contains $j_i$ different sets of collided points. Let $j_1+...+j_k=q$

\smallskip 

We define the space $W_r$ as the space of rows $(w_1,...,w_r)$ where $w_i \in \Rr\cup\{\infty\}$ and $w_1\leq w_2\leq \dots \leq w_r$. It is clear that the space $W_r$ is isomorphic to an $r$-dimensional cube. The space $e(m_1,\dots, m_k)$ is a cell of dimension $q+k$. The characteristic mapping $f:I^{q+k}\to \tilde{\mathbb{A}}^{n}$ is given by the formula $f:I^{q+k}=W_k\times W_{j_1}\times\dots W_{j_k} \to \tilde{\mathbb{A}}^{n}$.
\smallskip

The characteristic map $f$ maps homeomorphically the interior of the cube $:I^{q+k}$ on $e(m_1,\dots, m_k)$.\end{proof}

\medskip
\subsection{Topological stratification}

\medskip

\begin{defn}\label{D:strat}
Let $X$ be a topological space equipped with a stratification $\mathcal{\tilde{S}}$, and let $n = max\{ dim\, S\, |\, S \in \mathcal{\tilde{S}}\}$. The stratification $\mathcal{\tilde{S}}$ is called a topological stratification if it satisfies the following condition: for any point $x$ in a stratum of dimension $i$ with $i < n$, there exists a distinguished neighborhood $N$ of $x$ in $X$, a compact Hausdorff space $L$ with an $n-i-1$ dimensional topological stratification 
\[L=L^{n-i-1}\supset \dots \supset L^1\supset L^0\supset L^{-1}=\emptyset\]
and a homeomorphism 
\[\phi:\Rr^i\times cone^\circ(L)\to N,\] which takes $\Rr^i \times cone^{\circ} (L^j) \to N\cap X_{i+j+1}$. The symbol $cone^{\circ} (L^j)$ denotes the open cone, $L\times [0,1)/(l,0)\sim(l',0)$ for all $l, l'\in L$.
\end{defn}

\medskip

Let $\mathcal{S}$ be the stratification of $\overline{\textup{Conf}}_n(\C)$. A stratum in $\overline{\textup{Conf}}_n(\C)$ is a connected component formed from all degree $n$ polynomials, such that their Gauss-skizzes are all isomorphic to a geometric realisation of a Gauss-graph $\sigma$. A stratum (or connected component) of $\overline{\textup{Conf}}_n(\C)$ is denoted $A_\sigma\subset \overline{\textup{Conf}}_n(\C)$, where $\sigma\in\GG_n$. In particular, we have $$\mathcal{A}_{\overline{\sigma}}=\cup_{\sigma \prec \tau} \mathcal{A}_{\tau},$$
which leads to the following statement. 

\medskip
\begin{lem} 
The functor from the category $\tilde{P}$ to the category $(\mathcal{S}, \subset)$ is fully faithful.
\end{lem}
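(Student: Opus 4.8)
The plan is to unpack what ``fully faithful'' means for a functor out of a poset-as-category and to verify the two conditions separately, leveraging the definitions of the combinatorial closure $\overline{\sigma}$ and the stratum-closure relation $\mathcal{A}_{\overline{\sigma}}=\cup_{\sigma\prec\tau}\mathcal{A}_{\tau}$ that were just recorded. Recall that $\tilde{P}$ views the poset $\GG_n$ as a category: its objects are Gauss-graphs $\sigma$, and there is a unique morphism $\sigma\to\tau$ precisely when $\sigma\prec\tau$. The target $(\mathcal{S},\subset)$ is likewise a poset-category whose objects are the strata $\mathcal{A}_\sigma$ and whose morphisms are the inclusions of closures. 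The assignment on objects is $\sigma\mapsto\mathcal{A}_\sigma$, and on morphisms it sends the unique arrow $\sigma\to\tau$ (when $\sigma\prec\tau$) to the inclusion $\mathcal{A}_\tau\subseteq\overline{\mathcal{A}_\sigma}$ induced by the closure formula. Since both categories are thin (at most one morphism between any ordered pair), faithfulness is automatic: there is nothing to distinguish between two parallel arrows because there are never two. Thus the entire content of the statement is \emph{fullness}, i.e.\ that whenever there is a morphism $\mathcal{A}_\sigma\to\mathcal{A}_\tau$ in $(\mathcal{S},\subset)$ there is already a morphism $\sigma\to\tau$ in $\tilde{P}$.

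First I would make the object assignment precise and observe that it is well-defined and injective on objects: by the Theorem asserting a bijection between Gauss-skizze and Gauss-graphs, distinct Gauss-graphs $\sigma$ determine distinct (non-isomorphic) geometric realisations, hence distinct strata $\mathcal{A}_\sigma$. Next I would translate the ordering on strata. A morphism $\mathcal{A}_\sigma\to\mathcal{A}_\tau$ in $(\mathcal{S},\subset)$ means, by the boundary axiom of the stratification, that $\mathcal{A}_\tau\subseteq\overline{\mathcal{A}_\sigma}$, equivalently that $\tau$ appears as one of the labels in the closure of the stratum indexed by $\sigma$. The crucial bridge is the displayed identity $\mathcal{A}_{\overline{\sigma}}=\cup_{\sigma\prec\tau}\mathcal{A}_{\tau}$: its right-hand side ranges exactly over the combinatorial closure $\overline{\sigma}=\{\tau:\sigma\prec\tau\}$ from Definition \ref{D:closure}, so $\mathcal{A}_\tau\subseteq\overline{\mathcal{A}_\sigma}$ holds if and only if $\sigma\prec\tau$ in the Gauss-graph poset.

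The core of fullness then reduces to a single equivalence: for Gauss-graphs $\sigma,\tau\in\GG_n$,
\[
\mathcal{A}_\tau\subseteq\overline{\mathcal{A}_\sigma}
\quad\Longleftrightarrow\quad
\sigma\prec\tau .
\]
The implication $(\Leftarrow)$ is immediate from the closure formula. For $(\Rightarrow)$ I would argue that every stratum in the topological closure $\overline{\mathcal{A}_\sigma}$ is obtained from $\mathcal{A}_\sigma$ by a degeneration of the underlying Gauss-skizze, and that each such geometric degeneration is realised combinatorially by a sequence of contracting Whitehead moves $WH_-$ (Definition \ref{D:WH}), i.e.\ by the relation $\prec$. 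Here one uses that a limiting configuration collides roots and/or critical points---merging red and blue curves---exactly along the three cases (partial contraction on a forest, partial contraction on a tree, total contraction) enumerated after Definition \ref{D:WH}, so the label of the limiting stratum is precisely a $WH_-$-degeneration of $\sigma$. This is the step I expect to be the main obstacle, since it requires that the \emph{combinatorial} partial order faithfully reproduces the \emph{topological} incidence of strata with no extra or missing relations; it rests on the bijection between Gauss-skizze and Gauss-graphs together with the earlier semi-algebraic stratification Proposition, which guarantees that strata are labelled by, and only by, the combinatorial data that the Whitehead moves track.

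Having established this equivalence, fullness follows: a morphism $\mathcal{A}_\sigma\to\mathcal{A}_\tau$ forces $\sigma\prec\tau$, hence the unique arrow $\sigma\to\tau$ exists in $\tilde{P}$ and maps to it. Combined with the automatic faithfulness of a functor between thin categories, this proves the functor $\tilde{P}\to(\mathcal{S},\subset)$ is fully faithful, completing the argument.
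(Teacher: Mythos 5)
Your proposal is correct and follows essentially the same route as the paper: the paper's proof consists precisely of invoking the bijective correspondence between the combinatorial closure $\overline{\sigma}=\{\tau:\sigma\prec\tau\}$ and the topological closure $\mathcal{A}_{\overline{\sigma}}=\cup_{\sigma\prec\tau}\mathcal{A}_{\tau}$, which is exactly your ``crucial bridge.'' Your write-up is simply a more careful unpacking of the paper's one-line argument, making explicit both the automatic faithfulness between thin categories and the geometric input (degenerations realised by $WH_-$ moves) that the paper leaves implicit in the displayed closure formula.
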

\begin{proof}
The union $\overline{\sigma}=\{\tau : \sigma \prec \tau\}$ corresponds bijectively to $\mathcal{A}_{\overline{\sigma}}=\cup_{\sigma \prec \tau} \mathcal{A}_{\tau}.$ 
In particular, for any $\sigma\prec \tau$ there exists a unique $\mathcal{A}_{\sigma}$ and a unique $\mathcal{A}_{\tau}$ such that $\mathcal{A}_{\tau}\subset \overline{A}_{\sigma}$.
\end{proof}
\begin{cor}
There is an isomorphism of categories $\tilde{P}$ and $(\mathcal{S}, \subset)$.
\end{cor}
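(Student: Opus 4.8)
The plan is to deduce the claimed isomorphism of categories directly from the preceding lemma, which establishes that the functor from $\tilde{P}$ to $(\mathcal{S},\subset)$ is fully faithful. Recall that a fully faithful functor $F:\mathcal{C}\to\mathcal{D}$ is an isomorphism of categories precisely when it is bijective on objects; since full faithfulness has already been supplied, the only remaining task is to verify that $F$ is a bijection on objects. I would therefore structure the argument around exhibiting this object-level bijection explicitly.

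First I would identify the functor in question. Its effect on objects sends a Gauss-graph $\sigma\in\mathrm{Ob}(\tilde{P})=\GG_n$ to the stratum $\mathcal{A}_{\sigma}\subset\overline{\textup{Conf}}_n(\C)$, and its effect on morphisms sends the order relation $\sigma\prec\tau$ to the inclusion $\mathcal{A}_{\tau}\subset\overline{A}_{\sigma}$ recorded in the previous lemma. To see that the object map is injective, I would invoke the \emph{Theorem} asserting a bijection between the set of Gauss-skizze and the set of Gauss-graphs (proved via Theorem 1.1 of~\cite{Ac17}): distinct Gauss-graphs have distinct geometric realisations as Gauss-skizze, hence index distinct connected components $\mathcal{A}_{\sigma}$, so $\sigma\neq\sigma'$ forces $\mathcal{A}_{\sigma}\neq\mathcal{A}_{\sigma'}$. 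For surjectivity I would appeal to the definition of a stratum of $\overline{\textup{Conf}}_n(\C)$: by construction every stratum is, by definition, the connected component consisting of all degree $n$ polynomials whose Gauss-skizze is isomorphic to the geometric realisation of some Gauss-graph $\sigma$, so every object of $(\mathcal{S},\subset)$ arises as $\mathcal{A}_{\sigma}$ for a unique $\sigma\in\GG_n$. Thus the object map is a bijection.

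Having a functor that is both fully faithful and bijective on objects, I would then conclude formally: such a functor admits a two-sided inverse functor, namely the one sending $\mathcal{A}_{\sigma}\mapsto\sigma$ on objects and the inclusion $\mathcal{A}_{\tau}\subset\overline{A}_{\sigma}$ back to the relation $\sigma\prec\tau$ on morphisms. Full faithfulness guarantees that this inverse is well-defined and functorial, and the object bijection guarantees that the two composites are the respective identity functors. This yields the desired isomorphism of categories $\tilde{P}\cong(\mathcal{S},\subset)$.

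I expect the only genuine subtlety to lie in the object-level bijection rather than in the categorical formalism, which is routine once bijectivity on objects is secured. Specifically, the step requiring the most care is surjectivity together with the implicit claim that each stratum is connected and is indexed by a \emph{single} Gauss-graph: one must be sure that the combinatorial data of a Gauss-graph determines a connected stratum and that no two distinct graphs yield overlapping or equal strata. This rests on combining the bijection between Gauss-skizze and Gauss-graphs with the definition of $\mathcal{A}_\sigma$ as a connected component, so the argument is a matter of correctly matching the combinatorial closure $\overline{\sigma}=\{\tau:\sigma\prec\tau\}$ to the topological closure relation $\mathcal{A}_{\overline{\sigma}}=\cup_{\sigma\prec\tau}\mathcal{A}_{\tau}$ already recorded above, rather than any new technical obstruction.
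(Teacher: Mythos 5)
Your proposal is correct and matches the paper's intent: the paper states the corollary without proof as an immediate consequence of the preceding lemma, and your argument supplies exactly the routine verification being left implicit, namely that full faithfulness plus bijectivity on objects (injectivity via the bijection between Gauss-graphs and Gauss-skizze, surjectivity because every stratum $\mathcal{A}_{\sigma}$ is by definition indexed by a Gauss-graph $\sigma\in\GG_n$) yields an isomorphism of categories. Your closing remark correctly identifies the one genuinely delicate point — that each combinatorial type determines a single connected stratum — which the paper likewise absorbs into its definition of $\mathcal{A}_{\sigma}$ rather than proving separately.
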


\medskip

\begin{prop}\label{P:up}
Consider the poset $\tilde{P}$. Then, there exists one unique lower bound. 
\end{prop}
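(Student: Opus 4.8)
The plan is to identify the extremal Gauss-graph explicitly and then let the poset axioms do the rest: in any partially ordered set a bound comparable to every element is automatically unique, since if $\ell_1,\ell_2$ are two such then $\ell_1\preceq\ell_2$ and $\ell_2\preceq\ell_1$, whence $\ell_1=\ell_2$ by antisymmetry of $\preceq$. So the whole content of Proposition~\ref{P:up} is the \emph{existence} of a single Gauss-graph sitting at the deepest end of the order $\prec$ of Definition~\ref{D:closure}; once it is produced, its uniqueness is formal.

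My candidate is the fully contracted Gauss-graph $\sigma_0$ having one internal vertex carrying $4n$ leaves. First I would verify that $\sigma_0$ is a legitimate object of $\GG_n$ in the sense of Definition~\ref{D:1}: its unique internal node is a root-type vertex of valency $4n=4\cdot n$ (so $k=n$ in the rule $val(v_1)=4k$), all four region-colours $A,B,C,D$ occur cyclically around it, its half-edges alternate in the colours $R,B$, and there are exactly $4n$ leaves. Geometrically, $|\sigma_0|$ is the union of the $4n$ rays in the asymptotic directions $\tfrac{2k\pi}{4n}$, $k=0,\dots,4n-1$, emanating from a single point; this is the Gauss-skizze of the totally collided configuration (all $n$ roots equal), i.e.\ the $0$-dimensional, deepest boundary stratum $X^{(0)}$ of $\overline{\textup{Conf}}_n(\C)$, which is exactly why I read it as the \emph{lower} bound of the filtration. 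Moreover $\sigma_0$ is the only Gauss-graph with a single internal vertex and $4n$ leaves, because the cyclic pattern of colours around that vertex is completely forced by the colouring rules once the normalisation fixing the quadrant $A$ is imposed.

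It remains to prove that every $\tau\in\GG_n$ is comparable to $\sigma_0$, that is, that $\sigma_0$ is obtained from $\tau$ by a finite chain of contracting Whitehead moves (Definition~\ref{D:WH}), so that $\tau\prec\sigma_0$. I would run the total contraction of Case~3: inscribe a polygon whose vertices are the nodes of $V_{roots}$ lying on the boundary of a $2$-cell and shrink it, thereby merging those roots into one vertex, while the partial contractions of Cases~1 and~2 absorb the vertices of $V_{crit}$ along the internal edges joining them to the roots. Because a Gauss-graph is a finite decorated \emph{forest}, every such move strictly lowers the number of internal edges, so the process terminates, and the only terminal graph still carrying the mandatory $4n$ leaves is $\sigma_0$. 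Hence $\tau\prec\sigma_0$ for all $\tau$, so $\sigma_0$ is the unique extremal element; under the isomorphism $\tilde P\cong(\mathcal{S},\subset)$ it corresponds to the unique minimal stratum $\mathcal{A}_{\sigma_0}=X^{(0)}$.

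The step I expect to be the main obstacle is this reachability: one has to check that total contraction is always admissible in the \emph{decorated} category, i.e.\ that the colour constraints on the inscribed polygon (edges of a single colour, compatible adjacent region-colours) never obstruct the collapse, and, crucially, that the terminal graph does not depend on the combinatorial type or ``chamber'' of the starting $\tau$. In other words, the heart of the argument is to show that $\sigma_0$ is a genuine universal sink for $WH_-$, yielding in every case the same single vertex with the forced alternating colouring and precisely $4n$ leaves.
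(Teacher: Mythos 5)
Your proposal is correct in substance and arrives at exactly the element the paper's proof produces --- the Gauss-graph with one inner node of valency $4n$ and $4n$ leaves of alternating colors --- but by a genuinely different route. The paper proceeds by induction on $n$: it verifies the case of $2$-Gauss-graphs, then assumes the extremal tree $T_1$ with $4n$ alternating leaves exists for degree $n$ and merges $T_1$ with one extra $4$-leaf tree via a $WH_-$ move to obtain the degree $n+1$ extremum. You instead fix $n$ and argue directly: exhibit $\sigma_0$, check it is a legitimate object of $\GG_n$, show every $\tau$ reaches it by a terminating chain of contracting Whitehead moves, and deduce uniqueness formally from antisymmetry of $\preceq$ --- a cleaner separation of existence from uniqueness than the paper offers. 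Two caveats. First, your termination measure is slightly off: a generic Gauss-graph ($V_{crit}=\emptyset$, disjoint star-shaped trees) has \emph{no} internal edges, so ``each move strictly lowers the number of internal edges'' fails there; the correct strictly decreasing quantity is the number of inner vertices (equivalently, after absorbing $V_{crit}$, the number of trees), which each merge of Cases 1/3 reduces. Second, the reachability issue you honestly flag --- that the color constraints never obstruct inscribing and collapsing the ghost polygon, for an \emph{arbitrary} starting $\tau$ --- is indeed the crux, and it is worth noting that the paper does not close it either: its inductive step only treats $(n+1)$-Gauss-graphs containing $T_1$ as a subgraph together with one extra small tree, not arbitrary $(n+1)$-graphs, so your descent argument, once that admissibility check is supplied, would actually be tighter than the published induction. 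Finally, you navigated the paper's internal terminology clash correctly: the statement says ``lower bound'' while the proof and subsequent corollary say ``upper bound''; with $\sigma\prec\tau$ meaning $\tau$ is obtained from $\sigma$ by contractions, what is actually established is that your $\sigma_0$ is the greatest element, i.e.\ $\tau\prec\sigma_0$ for all $\tau$, which is precisely what you prove.
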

\begin{proof}
The proof is by induction. 
One can easily verify that the poset of $2$-Gauss-graphs endowed with $\prec$ order relation, has a unique upper bound: it is a tree $T_1$, with $8$ leaves (of alternating color) and which contains one unique inner node of valency $8$.
Consider the poset of 3-Gauss-graphs and consider those graphs containing $T_1$ as a subgraph. 

\smallskip

Applying a contraction $WH_-$ move onto the inner node of $T_1$ and onto the inner node of the remaining tree, we have a new tree with 12 leaves and a unique inner node of valency 12. It is impossible to apply again a $WH_-$ onto the new tree. Therefore, it is a unique upper bound. 

\smallskip 

Suppose that for the poset of $n$-Gauss-graphs endowed with $\prec$ order relation, there exists a unique upper bound, being a tree $T_1$ with $4n$ leaves (of alternating color), and containing one unique inner node (of valency $4n$). 

\smallskip 

Let us show that for the poset of $(n+1)$-Gauss-graphs, there exists a unique upper bound too. Consider the  $(n+1)$-Gauss-graphs containing the tree $T_1$ and an extra tree $T_2$. The tree $T_2$ has four leaves of alternating colors, and one inner node. 
\smallskip

Apply the $WH_-$ contraction operation (or a composition of two contraction morphisms), so that the inner nodes of $T_1$ and $T_2$ give a new tree, with $4(n+1)$ leaves and one (unique) inner node. Applying again a $WH_-$ move is impossible. Therefore, this new tree is the ()unique) upper bound. 
\end{proof}

\medskip

\begin{cor}
Consider the stratification by Gauss-graphs. Then, there exist a unique upper bound. 
\end{cor}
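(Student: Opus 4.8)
The plan is to deduce this as an immediate consequence of Proposition~\ref{P:up} together with the preceding corollary, which asserts an isomorphism of categories between the poset $\tilde{P}$ of $n$-Gauss-graphs and the stratification $(\mathcal{S}, \subset)$. Since both sides are posets regarded as categories (one morphism $x \to y$ exactly when $x \prec y$, respectively when the corresponding strata are related by closure-containment), an isomorphism of categories between them is precisely an order isomorphism. Order isomorphisms carry maximal elements to maximal elements and preserve uniqueness, so the unique upper bound supplied by Proposition~\ref{P:up} transports verbatim to a unique upper bound on the stratification side.

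Concretely, I would first recall that the isomorphism is realised by the assignment $\sigma \mapsto \mathcal{A}_\sigma$, and that by the fully faithful Lemma and the identity $\mathcal{A}_{\overline{\sigma}} = \cup_{\sigma \prec \tau}\mathcal{A}_\tau$, the relation $\sigma \prec \tau$ corresponds to $\mathcal{A}_\tau \subset \overline{\mathcal{A}}_\sigma$. Proposition~\ref{P:up} exhibits the maximum of $\tilde{P}$ as the totally contracted Gauss-graph $T_1$ carrying $4n$ leaves of alternating colours and a single inner node of valency $4n$. Its image under the isomorphism is the stratum $\mathcal{A}_{T_1}$, which therefore sits at the top of the transported order; equivalently, $\mathcal{A}_{T_1}$ lies in $\overline{\mathcal{A}}_\sigma$ for every other Gauss-graph $\sigma$, so it is the distinguished stratum in the closure of all the others. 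Uniqueness on the stratification side follows because the isomorphism is in particular a bijection, so the single maximum of $\tilde{P}$ can have only one image.

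The only point demanding genuine attention is confirming that the categorical isomorphism is order-preserving in the correct direction, so that the $\prec$-maximum of $\tilde{P}$ (the most contracted graph) indeed matches the stratum appearing in the closure of every other stratum rather than its opposite. This is exactly what the fully faithful Lemma and its corollary have already secured, so no argument beyond invoking them is required; there is no remaining combinatorial or topological obstacle. I would close by remarking that, since $\mathcal{A}_{T_1}$ is the image of a bona fide Gauss-graph, it is a legitimate stratum of $\overline{\textup{Conf}}_n(\C)$ and not merely an abstract maximum, which is what makes the statement meaningful for the decomposition.
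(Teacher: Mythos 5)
Your proposal is correct and follows essentially the same route as the paper, which states this corollary without further argument precisely because it is immediate from Proposition~\ref{P:up} together with the isomorphism of categories between $\tilde{P}$ and $(\mathcal{S},\subset)$ established by the fully faithful functor $\sigma\mapsto\mathcal{A}_\sigma$. Your explicit check that the order is transported in the right direction (so that $\mathcal{A}_{T_1}$ lies in the closure of every other stratum) merely spells out what the paper leaves implicit.
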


\medskip

\begin{lem}
Let us consider the functor mapping the poset $\tilde{P}$ to the geometric realisation of the simplicial complex, whose vertices are the elements of $\tilde{P}$ and whose simplices correspond to finite chains (totally ordered subsets) of $\tilde{P}$. Then, this geometric realisation is contractible.
\end{lem}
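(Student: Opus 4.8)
The plan is to exhibit the simplicial complex in question as the order complex (nerve) of the poset $\tilde{P}$ and to deduce contractibility from the fact, established in Proposition~\ref{P:up} and its corollary, that $\tilde{P}$ possesses a unique maximal element. The standard tool is the following elementary but powerful observation: \emph{if a poset has a greatest element (a cone point), then its order complex is contractible.} Concretely, let $\Delta(\tilde{P})$ denote the geometric realisation of the simplicial complex whose $k$-simplices are the chains $\sigma_0 \prec \sigma_1 \prec \dots \prec \sigma_k$. I would first identify this complex precisely as the order complex of $\tilde{P}$, so that the combinatorial machinery for nerves of posets applies verbatim.

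The key steps are as follows. First, let $\omega$ denote the unique upper bound of $\tilde{P}$, whose existence is guaranteed by the corollary to Proposition~\ref{P:up}; thus $\sigma \prec \omega$ for every $\sigma \in \tilde{P}$. Second, I would build an explicit contraction of $\Delta(\tilde{P})$ onto the vertex $\{\omega\}$. The cleanest way is to use the poset-map/cone argument: the two maps $\mathrm{id}\colon \tilde{P}\to\tilde{P}$ and the constant map $c_\omega\colon \tilde{P}\to\tilde{P}$ sending every element to $\omega$ are related by the pointwise inequality $\mathrm{id}(\sigma)=\sigma \prec \omega = c_\omega(\sigma)$. A standard lemma in combinatorial topology states that two order-preserving self-maps of a poset that are comparable pointwise induce homotopic maps on order complexes; applying this with the pair $(\mathrm{id},c_\omega)$ shows that $\mathrm{id}_{\Delta(\tilde{P})}$ is homotopic to the constant map at $\{\omega\}$, which is exactly the statement that $\Delta(\tilde{P})$ is contractible. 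Third, for the reader who prefers a direct argument, one can describe the straight-line homotopy explicitly: every point of $\Delta(\tilde{P})$ lies in the closure of a simplex $[\sigma_0,\dots,\sigma_k]$, and since $\sigma_k \prec \omega$, the simplex $[\sigma_0,\dots,\sigma_k,\omega]$ is also a simplex of the complex; the linear homotopy sliding each point toward the $\omega$-vertex of this enlarged simplex is well defined and continuous, and on overlaps of simplices it agrees because $\omega$ is a common cone point. This furnishes a deformation retraction onto $\{\omega\}$.

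The main obstacle, and the only point requiring genuine care, is verifying that $\omega$ is genuinely comparable to \emph{every} element of $\tilde{P}$ rather than merely maximal among some subfamily. In a general poset maximality does not imply being a greatest element, and contractibility of the order complex can fail for posets with several incomparable maximal elements. Here, however, the inductive construction in Proposition~\ref{P:up} produces the totally contracted tree $T_1$ with $4n$ leaves and a single inner node as an honest upper bound, reachable from any Gauss-graph by a finite sequence of contracting Whitehead moves; since $\sigma \prec \tau$ is precisely defined by the existence of such a sequence, transitivity of $\prec$ (established in the lemma showing $\GG_n$ is a poset) guarantees $\sigma \prec \omega$ for all $\sigma$. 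Once this comparability is in hand, the cone-point argument closes the proof with no further computation, and the contractibility of $\Delta(\tilde{P})$ follows.
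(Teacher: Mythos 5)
Your proposal is correct and follows essentially the same route as the paper: the paper's proof simply invokes Proposition~\ref{P:up} for the unique upper bound and then cites the classical fact (Wachs, p.~8) that a poset with a greatest element has a contractible order complex, which is exactly the cone-point argument you spell out. Your additional care in distinguishing a greatest element from a merely maximal one, and your explicit order-homotopy/straight-line contraction, merely unfold the cited classical result rather than depart from the paper's argument.
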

\begin{proof}
Since, by proposition~\ref{P:up} there exists one unique upper bound, it is a classical result that the geometric realisation of the simplicial complex corresponding to $\tilde{P}$ is contractible (see~\cite{Wa04}, p.8).
\end{proof}

\section{Deformations, Maurer--Cartan equations. New perspectives}
In this section we deform Gauss-skizze. We present first the classical approach, which consists in using deformation theory, and we show that this can be used only locally in the context of the Gauss-skizze. 

\smallskip 
 
Informally speaking, the aim of deformation theory is to study structures of a given type, existing on an object up to some 'equivalence'. Deformations are ruled by the so-called {\sl Maurer--Cartan equations} and by differentially graded Lie algebras (DGLA).

\smallskip 

Since we investigate the deformation problems on the space of Gauss-skizze, therefore we work on the field $\Rr$ and will give the explicit DGLA. 

\smallskip

For a global study of the deformations of these curves, it is necessary to use a complementary viewpoint, which is ruled by {\sl Hamilton--Jacobi equations}. This will be shown in the second part of this section.

\medskip
\subsection{Deformations, Maurer--Cartan equations}
Classical deformation theory for algebraic-geometry is based on the work of Kodaira--Spencer~\cite{KoS58} and Kuranishi~\cite{Ku68} on small deformations of complex manifolds. It was formalised by Grothendieck in the language of algebraic schemes~\cite{Gro95}.

 \smallskip

The idea by Grothendieck~\cite{Gro95}, M. Schlessinger~\cite{Sch68} and M. Artin ~\cite{Ar76} is that, an infinitesimal deformation of an algebraic object $X$ over a field $k$ is a local deformation of $X$ parametrized by an algebra $R$ in the category $ {\bf Art}$ of Artinian rings. 

\smallskip

Le $Def_X(R)$ be the set of equivalence classes of local deformations of $X$, parametrized by $R$. Then, the assignment $R\mapsto Def_X(R)$ defines the functor: 
\[Def_X: {\bf Art}\to {\bf Set}\] called the classical deformation functor. 

The classical formalism of Grothendieck--Mumford--Schlessinger of infinitesimal deformation theory lead to the following mantra:

\smallskip 

\[\text{Deformation\ problem} \longleftrightarrow \text{Deformation\ functor/groupoid}\]

\smallskip 

P. Deligne formulated the following philosophy:
{\it ''In characteristic 0, any deformation problem is controlled by DGLA (Differentially Graded Lie Algebra)."}
This was proven in the last years using homotopic algebra, and higher categories (categories of models and $\infty$-categories see~\cite{Lu1} and~\cite{Pr1}).
 
 \smallskip 
 
 This result had as a result that once a DGLA ${\bf L}$ is given, one can define the associated deformation functor $Def_{\bf L} : {\bf Art} \to {\bf Set}$, using the solutions of the Maurer--Cartan equation up to gauge equivalence. More precisely:

\medskip 

\begin{defn}[\cite{Ia10}, Definition 1.6]
Let ${\bf L}$ be a DGLA. Then, the Maurer--Cartan functor associated with ${\bf L}$ is the functor 

\[MC_{\bf L} :{\bf Art} \to {\bf Set},\] 
\[MC_{\bf L}(A)= \{x \in {\bf L}^1\otimes \m_A |dx+\frac{1}{2}[x,x]=0\},\]
where $\m_A$ is the maximal ideal for $A\in {\bf Art}$. 
\end{defn}

\medskip
The deformation functor $Def_{\bf L}$ associated with a DGLA ${\bf L}$ is given by the quotient of $MC_L(A)$ by the the group:
\[Def_{\bf L}=\frac{\{x\in {\bf L}^1\otimes \m_A| dx+\frac{1}{2}[x,x]=0\}}{exp({\bf L}^0\otimes \m_A)}\]
\smallskip

Since we are interested in the field of real numbers, we use the construction of the DGLA, given in~\cite{Ia10} for the deformation. The big picture is as follows. 
For $X$ a smooth variety, defined over field of real numbers, define the tangent sheaf $\Theta_X$. Given, $Z \subset X$ be a closed subscheme, define $\Theta_X(-logZ)$ to be the sheaf of tangent vectors to $X$, being tangent to $Z$. The inclusion of sheaves of Lie algebras is $\Xi: \Theta_X(- log Z ) \hookrightarrow \Theta_X$.

\smallskip

Let $\mathcal{U}$ be an affine open cover of $X$. We can associate a \v Cech cosimplicial Lie algebra by putting $\Theta_X(\mathcal{U})$. This is defined by introducing a sequence of Lie algebras $\{\g_k = \prod_{i_0<\dots<i_k} \Theta_X(U_{i_0\dots i_k})\}$ and maps among them. In particular, $\g_0 = \prod_i \Theta_X(U_i)$ and each $\Theta_X(U_i)$ control infinitesimal deformations of the open set $U_i$, whereas maps control the gluing of deformations. 

\smallskip

The main point of the construction is that there exists a semicosimplicial differential graded Lie algebra, $\g = \{\g_k\}_k$, with $\g_0$ controlling the deformations of each open set of the cover. 

\smallskip

Associating to $\Theta_X(-logZ)$ and $\Theta_X$, the \v Cech semicosimplicial
Lie algebras $\Theta_X(-logZ)(U)$ and $\Theta_X(U)$, respectively, this leads to introducing a bisemicosimplicial \footnote{Let $\Delta_{\bullet}$ be the category defined in~\ref{S:1}. An object in a category {\bf C} is a bisemicosimplicial object is a covariant functor $\Delta_{\bullet}\times \Delta_{\bullet}\to {\bf C}$} Lie algebra, given by: $\Xi^{*}: \Theta_X(-logZ)(U) \to \Theta_X(U)$.

\smallskip

It is now sufficient to find a differential graded Lie algebra, by using the Thom--Whitney construction, giving the DGLA $Tot^*_{TW}(\Xi^*)$ (see \cite{Ia10} for more). This algebra controls the deformations of the closed subscheme $Z$. 

\medskip 

 \subsection{Deformation of Gauss-skizze}
 
In the following paragraphs, we remedy to the Gauss-skizze deformation problem by adopting a complementary point of view on deformation problems.

\smallskip

The classical method of investigating deformation problems in algebraic-geometry fails to work for the study of all deformations of Gauss-skizze. The only case when this operating mode is possible concerns polynomials of type $z^n-c$, where $c\in \C$. We expose an example for degree 2 polynomials of type $z^2-c$.

\smallskip 

\begin{example}
Take the complex polynomial $P(z)=z^2+a_0$, where $a_0$ is a real number. In real variables, the polynomial $P$ is splitted into $(x^2-y^2+a_0)+\imath2xy$. We focus on the harmonic polynomial given by the imaginary part and its patterns. 

\smallskip

Consider the $\Rr$-algebra $A=\Rr[x,y]/(xy)$. It can be deformed to $A_t=\Rr[x,y]/(xy-t)$, for any value $t\neq 0$ of the parameter $t$ in $\Rr$. The family $\{A_t: t\in \Rr\}$ of {\it commutative $\Rr$-algebras} can be considered as a family of deformations of the commutative $\Rr$-algebra $A=A_0$. 
\smallskip

Concerning $A_0$, the pattern in $\Bp_n$ is a pair of two lines, coinciding with the real and imaginary axis and intersecting at the origin. The deformed patterns at $A_t$ give a hyperbola, i.e. a pair of smooth curves. Combinatorially we have proceeded to an expanding $WH_+$ move. 

\smallskip
 
This example illustrates the notion of {\it local deformations} of the patterns and of the algebra behind. Let $R=\Rr[[t]]$ be the formal power series ring on one variable. Then, $A_R=\Rr[x,y][[t]]/(xy-t)$ is a {\it local deformation of $A$ parametrized by $R$}. Indeed, $A_R$ is $R$-flat with isomorphism $\Rr\otimes A_r \to A$ induced by the map $A_R \to A$ given by $t\to 0.$ 
Similar investigations can be lead for s $(x^2-y^2+a_0)$. 
\end{example}

\smallskip 

Somehow, the problem of deforming the Gauss-skizze of more general polynomials fails to be algebraically explicit with this method. Indeed, let us consider the coordinate ring $A=\Rr[x,y]/(ReP(x,y))$. This polynomial has a given number of critical points of $P$ lying on $ReP(x,y)=0$ and corresponds to a given pattern. The aim is to locally deform the original structure, by adding new critical points or smoothing those already pre-existing. It is clear that an explicit parametrization $R$ here is not possible. 

\medskip 

\begin{thm}\label{P:germ}
Let $\mathcal{C}$ be a pattern in $\Bp_n$ (or $\Rp_n$) and consider one connected component in $\mathcal{C}$. Then, there exists an explicit germ corresponding to it and an explicit local deformation parametrized by an object of ${\bf Art}$. \end{thm}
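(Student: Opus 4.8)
The plan is to work locally near a single connected component of a level set of a harmonic polynomial and to exhibit both the germ and a parametrized deformation explicitly. First I would fix a connected component $\mathcal{C}_0$ of the pattern given by $\{(x,y)\in\Rr^2 : ReP(x,y)=0\}$ (the argument for $ImP$ is identical). Recall from the cited structure theory that $ReP$ is a harmonic polynomial of degree $n$ whose zero locus is an embedded forest; the combinatorial local model near any point of this forest is determined by whether the point is a regular point of $ReP$ (a smooth arc), a root of $P$ (intersection of a red and a blue branch), or a critical point $z_0$ with $ReP(z_0)=0$ lying on the component. The key observation is that near a point where $ReP$ and its gradient behave generically, the germ of $\mathcal{C}_0$ is cut out by a single equation, so the natural candidate for the local ring is the germ $\mathcal{O} = \Rr[x,y]_{(x,y)}/(f)$, where $f$ is the Weierstrass-type local normal form of $ReP$ at the chosen point.

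Next I would produce the explicit normal form. Since $ReP$ is harmonic, near a critical point of multiplicity $m$ lying on the zero set one may, after an affine change of the flat coordinates $x_a$ supplied by the Frobenius structure, bring $ReP$ to the model $Re(z^{m+1})$, i.e. the real part of $z^{m+1}=(x+\imath y)^{m+1}$. This is exactly the local unfolding picture of an isolated singularity in the sense of Saito, and it is where I would invoke the Frobenius-manifold coordinates $u^i=P(\rho_i)$ described earlier: they let me package the critical values as the deformation parameters. The germ is then $\mathcal{O}=\Rr[x,y]_{(0,0)}/(Re(z^{m+1}))$, and this is the explicit germ asserted in the statement. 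The local deformation is obtained by unfolding: I replace $Re(z^{m+1})$ by $Re\bigl(z^{m+1}+t_{m-1}z^{m-1}+\cdots+t_0\bigr)$, truncating to finite order so the parameters $(t_0,\dots,t_{m-1})$ generate the maximal ideal of an Artinian quotient $A=\Rr[t_0,\dots,t_{m-1}]/\m^{N}$. One checks flatness of the resulting family over $A$ exactly as in the degree-$2$ worked example $\Rr[x,y][[t]]/(xy-t)$ already in the paper: the quotient is free as an $A$-module on a monomial basis, so the deformation is $A$-flat with the right central fiber, which is precisely the condition for membership in the deformation functor $Def_{\mathcal{O}}$.

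The main obstacle I anticipate is matching the \emph{combinatorial} deformation (an expanding $WH_{+}$ move, which splits or smooths critical points) with the \emph{algebraic} one so that the unfolding parameters land inside $\mathbf{Art}$ rather than forcing a nondiscrete parameter space. Concretely, a generic unfolding of $Re(z^{m+1})$ separates the single valency-$2(m{+}1)$ critical vertex into several valency-$4$ vertices, which is the geometric content of the $WH_{+}$ move; I must verify that the versal unfolding base is exactly the affine space whose Artinian truncations realize these splittings, and that the restriction to the real slice (since we work over $\Rr$, not $\C$) remains flat and still produces an honest pattern satisfying the Cauchy--Riemann constraint recorded in the injectivity lemma for $h$. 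Handling that real structure, and confirming that only \emph{one} connected component is being deformed (so the parametrization is genuinely local and the ambient global obstruction identified just before the theorem is avoided), is the delicate point; everything else reduces to the standard flatness check illustrated by the $z^2-c$ example.
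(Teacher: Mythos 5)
Your proposal proves the statement, but by a genuinely different route than the paper. The paper obtains the ``explicit germ'' \emph{combinatorially}: it takes the whole embedded tree, equips it with an indexing (positive edge labels summing, around each face, to $2\pi$ times the degree of the face), orients the edges, and builds a ramified covering from the forest onto $\Rr$ equipped with the spherical metric $2|d\kappa|/(1+|\kappa|^2)$, following the method of Eremenko--Jakobson--Nadirashvili; this realizes the entire connected component at once as an explicit algebraic curve germ $A_0$ with isolated singularity, and the deformation is then obtained by citing the Kas--Schlessinger versal deformation theorem (together with Fox's exposition). You instead work analytically at a single critical point: a harmonic normal form $Re(z^{m+1})$, an explicit truncated unfolding over $\Rr[t_0,\dots,t_{m-1}]/\m^N$, and a direct flatness check. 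Each route buys something: the paper's construction manufactures the germ from the combinatorial data of the tree alone and treats a component with several inner nodes in one stroke, and its appeal to Kas--Schlessinger yields \emph{versality}, which the paper uses immediately afterwards in the corollary identifying expanding $WH_+$ moves with versal deformations of the germ at an inner node; your construction, by contrast, lands honestly in ${\bf Art}$ (the Kas--Schlessinger base is a priori an analytic germ $(\C^l,0)$, so the Artinian truncation is actually cleaner in your version), the flatness is elementary since the family is a hypersurface, and unfolding inside $Re(z^{m+1}+t_{m-1}z^{m-1}+\cdots+t_0)$ keeps the deformed object harmonic, hence an honest pattern.

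Three points need repair before your argument is complete. First, an \emph{affine} change of coordinates cannot bring $ReP$ (of global degree $n$) to $Re(z^{m+1})$; you need a local biholomorphic change: write $ReP=Re\,g$ with $g$ holomorphic vanishing to order $m+1$ at the critical point, then $g=h^{m+1}$ for a biholomorphic germ $h$, so $ReP=Re(h^{m+1})$ in the new local coordinate. Relatedly, the flat coordinates of the Frobenius structure live on the base $\tilde{\mathbb{A}}^{n}$ (the space of polynomials), not on the $(x,y)$-plane, so that invocation should be dropped. Second, your germ is attached to one vertex, while the statement asks for a germ corresponding to the connected component; if the tree carries several inner nodes you must either take the local germs at all vertices or realize the whole tree as the paper does---given that the paper's own corollary attaches germs to inner nodes, this matches the intent, but it should be said explicitly, and the case of a smooth component (empty $V_{crit}$) should be noted as trivial. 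Third, your $m$-parameter harmonic unfolding is not the versal deformation of the plane curve germ $Re(z^{m+1})$, whose base has dimension the Tjurina number $m^2$; this is harmless for the present theorem, which asks only for \emph{a} local deformation over an Artinian base, but it would not substitute for the versality used in the corollary that follows the theorem in the paper.
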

Before we prove the theorem, let us recall a statement by Kas $\&$ Schlessinger from~\cite{KS72}.

\smallskip 

For $V_0\subset \C^n$ an analytic variety of dimension $n-p$. Kas and Schlessinger give an analytic construction for a versal deformation of its germ at 0, for the case where $V_0$ has an isolated singularity at 0, and $V_0$ is a local complete intersection at 0. 
\begin{thm*}[Kas $\&$ Schlessinger, \cite{KS72}]\label{T:KS}
The family $\dot{\pi}:(V,0)\to (\C^l,0)$ is a versal deformation of the germ $(V_0,0)$; that is, any flat deformation $\Psi : (W,0)\to (S,0)$ of $(V_0,0)$ is induced from $\dot{\pi} (V,0) \to (\C^l,0)$ by a map $\varphi: (S,0) \to (\C^l,0)$.
\end{thm*}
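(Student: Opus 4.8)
The plan is to reduce the statement to the Kas--Schlessinger theorem quoted above by putting the germ of the connected component into an explicit normal form. First I would locate the singular points of the component. A connected component of a $B$-pattern (resp.\ $R$-pattern) is the zero locus of the harmonic polynomial $u=\mathrm{Re}\,P$ (resp.\ $\mathrm{Im}\,P$) restricted to that component; by the stated properties of Gauss-skizze its vertices are exactly the points of $V_{crit}$ lying on it, while along the interiors of the edges and at the simple roots the level curve is a smooth arc. Away from a vertex the germ is that of a smooth arc and the deformation is trivial, so the content is the germ at a vertex $z_0$ of valency $2k$.

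Next I would produce the local normal form. At such a vertex $P'$ vanishes to order $k-1$, so $P(z)-P(z_0)=(z-z_0)^k g(z)$ with $g(z_0)\neq 0$; choosing a local holomorphic $k$-th root and setting $w=(z-z_0)g(z)^{1/k}$ gives a biholomorphic change of coordinate with $P-P(z_0)=w^k$. Since the critical value $P(z_0)$ is purely imaginary (resp.\ real) at a $V_{crit}$ vertex of the $B$-pattern (resp.\ $R$-pattern), one has $u=\mathrm{Re}(w^k)$ (resp.\ $u=\mathrm{Im}(w^k)$) in these coordinates. Writing $w=\xi+\imath\eta$, this is a homogeneous real form of degree $k$ splitting into $k$ distinct real linear factors, so the germ $(V_0,0)=(\{u=0\},0)\subset(\C^2,0)$ is an ordinary $k$-fold point, i.e.\ a union of $k$ distinct lines through the origin. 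This is the explicit germ corresponding to the component.

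I would then feed this germ into the Kas--Schlessinger theorem. Being a plane curve, $(V_0,0)$ is a hypersurface, hence automatically a local complete intersection of dimension $n-p$ with $n=2$, $p=1$; and a union of distinct lines is singular only at their common point, so the singularity is isolated. The hypotheses are therefore satisfied and one obtains a versal deformation $\dot\pi:(V,0)\to(\C^l,0)$. For a hypersurface germ $f=0$ this family is completely explicit: $F(\xi,\eta,t)=f(\xi,\eta)+\sum_{j=1}^{l}t_j m_j(\xi,\eta)$, where $m_1,\dots,m_l$ is a monomial basis of the Tyurina algebra $\mathcal{O}_{\C^2,0}/(f,\partial_\xi f,\partial_\eta f)$, and for $f=\mathrm{Re}(w^k)$ both this basis and $l=(k-1)^2$ are read off directly. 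For $k=2$ this recovers exactly the node deformation $\xi^2-\eta^2=t$ of the worked example preceding the statement.

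Finally, to exhibit the required base in ${\bf Art}$ I would restrict the versal family to an Artinian quotient of $\mathcal{O}_{\C^l,0}$: passing to the real structure and to a truncation $A=\Rr[t_1,\dots,t_l]/\mathfrak{m}^N$ (for instance $N=2$, the first-order deformation) produces a local Artinian ring, and the universal property in the Kas--Schlessinger theorem gives an induced flat deformation parametrized by $A$. The main obstacle is the normal-form step: one must check that the real-analytic coordinate change $(x,y)\mapsto(\mathrm{Re}\,w,\mathrm{Im}\,w)$, which complexifies to a biholomorphism of germs in $\C^2$, carries the complexified level curve $\{u=0\}$ to the model $\{\mathrm{Re}(w^k)=0\}$ so that the singularity remains an ordinary multiple point; once this is settled the remaining verifications (isolatedness, the local complete intersection property, and Artinianness of the truncated base) are automatic for plane curves.
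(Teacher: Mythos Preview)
Your proposal is not a proof of the displayed Kas--Schlessinger statement itself --- that result is merely quoted from \cite{KS72} and left unproved in the paper --- but rather a proof of the surrounding Theorem~\ref{P:germ}, which is what the paper's subsequent proof environment actually establishes. Under that reading your argument is correct, and it takes a genuinely different route from the paper.

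The paper obtains the explicit germ by a combinatorial detour: following \cite{ErJaNa07}, it equips the embedded tree with an indexing (edge-weights summing to $2\pi$ times the face degree), adds a compatible orientation, and builds a ramified covering from the tree onto $\Rr$ equipped with the spherical metric, asserting that this produces an explicit algebraic curve germ with isolated singularity to which Kas--Schlessinger then applies. You instead exploit the holomorphic origin of the pattern directly: at a vertex of valency $2k$ you write $P(z)-P(z_0)=w^{k}$ in a local biholomorphic coordinate, identify the germ as the ordinary $k$-fold point $\{\mathrm{Re}(w^{k})=0\}$, and feed this plane-curve complete intersection with isolated singularity into Kas--Schlessinger, reading off the miniversal base $\C^{(k-1)^{2}}$ from the Tyurina algebra.

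Your route is shorter and more transparent: it names the germ, computes the dimension $l$, recovers the $k=2$ node example exactly, and exhibits the Artinian parametrization by truncation, none of which is made explicit in the paper's argument. The paper's approach, by contrast, attaches global combinatorial data (indexing, orientation, the ramified cover) to the whole tree rather than working one vertex at a time; this sits closer to the surrounding discussion of patterns as trees, but it leaves the passage ``this gives an explicit algebraic curve germ $A_0$ with isolated singularity'' somewhat opaque. The one issue you flag as the main obstacle --- that the real-analytic change $(x,y)\mapsto(\mathrm{Re}\,w,\mathrm{Im}\,w)$ complexifies to a biholomorphism of $(\C^{2},0)$ carrying the complexified level curve to the model --- is routine once $w$ is chosen holomorphic in $z$, so it is not a genuine gap.
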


\begin{proof}

Consider a connected component in $\mathcal{C}$. It is an embedded tree in $\Rr^2$. 
Components of the complement of the embedded tree are called regions. All faces are simply connected, unbounded regions. 

\smallskip 
In the following, we mimic the method developed in~\cite{ErJaNa07} (in the proof of theorem 3).
An {\it indexing} is added to the tree, i.e. a prescription of positive numbers to the edges, such that: for each face, the sum of these numbers over all edges on the boundary of this face equals to $2\pi$ times the degree of the face. It can be shown that for every forest there exists an indexing. In the special case that there are no vertices the indexing is unique and the index of each edge is $2\pi$.

\smallskip

Let us apply the following well known construction. Let $\mathfrak{e} = [v_0,\infty)$ be an edge, whose one extremity is $\infty$ and let us choose a point $\tilde{v}$ in the interior of this edge.
We replace in $\mathfrak{e}$ the part $[\tilde{v},\infty)$ by an odd number of edges, that we denote $e_1,...,2_{2k+1}$ and whose extremities are $\tilde{v}$ and $\infty$. Now, by this construction the edge $\mathfrak{e} $ is replaced by $e_0=[v_0,\tilde{v}]$ and $e_1,...,2_{2k+1}$ (where it is supposed that $e_0,...,2_{2k+1}$ are labeled in a natural cyclic order around $v$).

\smallskip 
If  the label of $\mathfrak{e}$ is $\kappa$, then in the new tree, we index $e_0$ by $\frac{1}{2}\kappa$ 
and the remaining edges by $\frac{1}{2}\kappa$ and $2\pi-\frac{1}{2}\kappa$ alternatively. Using this, we define the length of a path in the forest as the sum of the indexes of edges in this path. 

\smallskip 

To this new construction, an orientation is added to the set of edges. Supposing that the set of edges $e_0,...,2_{2k+1}$ attached to the vertex $\tilde{v}$ are labeled in a natural cyclic order, for any pair $e_j$ and $e_{j+1}$ one edge is oriented towards $\tilde{v}$, the other one is oriented in the other direction. There are exactly two ways of putting an orientation. This induces orientation on the boundaries of each face having common boundary with this tree. So, we obtain orientation of all other trees which intersect the boundary of these faces. Continuing this procedure we orient the edges of the whole forest, and in particular the boundaries of all faces.

\smallskip 

Now we construct a ramified covering from the forest to the real line $\Rr$ and we equip $\Rr$ with the spherical metric $2|d\kappa|/(1+|\kappa|^2)$. The length of $\Rr$ is thus $2\pi$. The real line has a natural orientation from $-\infty$ to $\infty$.
Each edge of the forest is then mapped homeomorphically and respecting the orientation (that is increasing with respect to the orientations of the edge and of the real line) onto an interval of $\Rr$. All leaves are mapped to $\infty$. Such a continuous map is uniquely defined for fixed indexes and orientations on the edges. It is a ramified covering (ramified at the vertices).

\smallskip 

Since this gives an explicit algebraic curve germ $A_0$ with isolated singularity, we can apply easily the Theorem~\ref{T:KS} developed by in Kas and Schlessingerand in~\cite{KS72}, and the statement in Fox~\cite{Fo93}.This   locally deforms the germ $A_0$ into $A_R$. This deformation is parametrized by an algebra $R$.

\end{proof}

\medskip 

\begin{cor}
Let $\mathcal{C}$ be a pattern. Consider one connected component in $\mathcal{C}$ being a tree with non-empty set of inner nodes. An expanding $WH_+$ move applied to an inner node amounts to making a versal deformation of the germ associated to this inner node. 
\end{cor}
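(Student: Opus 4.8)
The plan is to leverage the explicit versal deformation constructed in the proof of Theorem~\ref{P:germ}, applied locally at a single inner node of the tree, and to identify the combinatorial effect of that deformation with an expanding Whitehead move $WH_+$. First I would localize the situation: given a connected component of the pattern $\mathcal{C}$ that is a tree with a non-empty set of inner nodes, I pick one inner node $v \in V_{crit}$ and pass to a small neighborhood of $v$ in $\Rr^2$. In this neighborhood the pattern is the zero locus of the harmonic polynomial $ReP$ (or $ImP$) near a critical point, so it looks locally like the zero set of the real (or imaginary) part of $z^m$ for some $m \geq 2$ equal to the order of vanishing at $v$; concretely it is a union of $2m$ half-edges of one fixed color emanating from $v$, matching the description of tails at $V_{crit}$ given just after Definition~\ref{D:1}. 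This germ at $v$ has an isolated singularity and is a local complete intersection, so the hypotheses of the Kas--Schlessinger Theorem~\ref{T:KS} are met.

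Next I would invoke Theorem~\ref{P:germ} directly: it produces an explicit germ $A_0$ associated to this connected component together with an explicit local deformation $A_R$ parametrized by an Artinian ring $R$, coming from the versal deformation $\dot{\pi}:(V,0)\to(\C^l,0)$ of Kas--Schlessinger. Restricting attention to the inner node $v$, the relevant germ is precisely the singularity sitting at $v$, and the versal deformation of that germ is exactly the statement I want to attach to $v$. The substance of the corollary is therefore to verify that the \emph{combinatorial} shadow of this analytic versal deformation is the move $WH_+$. To see this I would trace through what the deformation does to the level set: versally deforming the germ of $\{ReP = 0\}$ at a critical point of order $m$ separates the colliding branches, replacing the single valency-$2m$ vertex by a configuration of lower-valency vertices joined by new edges of the same color, precisely the smoothing described in the $z^2 - c$ example and in the discussion of the coordinate ring $A = \Rr[x,y]/(ReP(x,y))$ preceding the theorem. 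This is the inverse of the contraction step in a contracting $WH_-$ move (Definition~\ref{D:WH}, Case 2), hence by definition an expanding $WH_+$ move.

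The main obstacle I anticipate is the matching of the \emph{dimension count}: I must check that the number of parameters in the versal deformation of the order-$m$ critical point germ agrees with the combinatorial degrees of freedom introduced by the corresponding $WH_+$ move (equivalently, the Milnor/Tjurina number of the isolated singularity should match the dimension of the cell produced by expanding the vertex). The cell-decomposition computation in the proof of the preceding theorem, where $e(m_1,\dots,m_k)$ is shown to be a cell of dimension $q+k$ via the characteristic map $f:I^{q+k}\to\tilde{\mathbb{A}}^n$, provides the needed bookkeeping: the splitting of an order-$m$ collision into its constituent pieces is governed by exactly the same $W_r$-cube combinatorics, so the parameter $t$ of the local deformation traces out the new edge length introduced by the $WH_+$ move. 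The remaining care is to ensure that the deformation respects the coloring and the Cauchy--Riemann constraints (so that the deformed curve is still a genuine pattern in $\Bp_n$ or $\Rp_n$), which follows because the deformation is carried out within the harmonic polynomial $ReP$ (resp. $ImP$) itself rather than on an arbitrary real curve, as in Theorem~\ref{P:germ}.
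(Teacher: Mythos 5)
Your proposal follows essentially the same route as the paper, which disposes of the corollary in one line by declaring it straightforward from Kas--Schlessinger~\cite{KS72} and Fox~\cite{Fo93} on the strength of the germ-plus-versal-deformation construction of Theorem~\ref{P:germ}. Your write-up simply fills in the details the paper leaves implicit (localization at the node, identification of the combinatorial shadow with $WH_+$, and the dimension/coloring checks), all consistent with the paper's intended argument.
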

This result is straightforward from~\cite{KS72} and the exposition in~ \cite{Fo93}.

\medskip 
\subsection{New approach}
To overcome the problem, highlighted above, we propose a complementary approach, to the classical Maurer--Cartan perspective. We have the following diagram:
\[\begin{tikzcd}
 \GS_n \arrow[d,"\pi"]& \GG_n \arrow[ld,dashed] \arrow[l,"\rho"] \\
 \mathcal{T}
 \end{tikzcd}\]
where $ \mathcal{T}$, isomorphic to the affine space $\Rr^{2n}$, which is parametrizing $ \GS_n$, and $\pi$ is a proper flat morphism, $\rho$ is an isomorphism.

\smallskip 

A path in the space of degree $n$ complex polynomials, in one variable is denoted by $P(z,t)$, where $t\in [0,1]$.
Similarly, a path on the space of patterns will be denoted by $ReP({\bf x},t)=0$ (resp. $ImP({\bf x},t)=0$). This is equivalent to writing $P^{-1}(\Rr,t)$ respectively $P^{-1}(\imath\Rr,t)$.

\smallskip

Any point in the fiber of $\pi$ is a Gauss-skizze i.e. a pair of level curves given by $ReP({\bf x},t)=0$ and $ImP({\bf x},t)=0)$, where $P$ is a given complex polynomial, its coefficients lying in $T$. Consider a pair of distinct fibers. Since the space of polynomial is connected, there always exists a path from one fibre to the other one.

\medskip

\begin{defn}
A deformation of a Gauss-skizze is trivial whenever the deformed Gauss-skizze is isomorphic (in the graph sense) to the initial one.
We say that a deformation of a Gauss-skizze is a non-trivial small deformation whenever it is not trivial. 
\end{defn}

\medskip
\begin{thm}
A deformation of any Gauss skizze obeys to the following differential equation:
\begin{equation}\label{E:HJ}
\frac{ \partial}{\partial t}{\Phi({\bf x},t)}+ {\bf v}\nabla \Phi({\bf x},t)=0,
\end{equation}
where $\Phi$ is given by $\Phi=P^{-1}(\Rr,t)$ or $\Phi=P^{-1}(\imath\Rr,t)$, for a given complex polynomial $P$, $t\in [0,1]$ and {\bf v} is a vector field.
\end{thm}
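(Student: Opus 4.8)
The plan is to recognize the family of level curves $\Phi(\mathbf{x},t)=P^{-1}(\Rr,t)$ (or $P^{-1}(\imath\Rr,t)$) as a moving interface in the sense of the level set method, and to derive the transport equation \eqref{E:HJ} by differentiating the defining relation along the motion. First I would fix a path $P(z,t)$ in the space of degree $n$ complex polynomials, with $t\in[0,1]$, and consider the real-valued function $\Phi(\mathbf{x},t)=\operatorname{Re}P(\mathbf{x},t)$ (the argument being identical for $\operatorname{Im}P$). By the properties recalled from \cite{ErJaNa07} this is a harmonic polynomial whose zero set is the embedded forest constituting the pattern; its three sign conditions (negative inside $\Omega$, positive on $\overline{\Omega}^c$, zero on $\partial\Omega$) make it a legitimate level set function whose zero level carries exactly the Gauss-skizze curve at time $t$.

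The key computation is then the following. A point $\mathbf{x}(t)$ lying on the interface at all times satisfies $\Phi(\mathbf{x}(t),t)=0$ identically in $t$. Differentiating by the chain rule gives
\begin{equation}
\frac{\partial \Phi}{\partial t}(\mathbf{x},t)+\frac{d\mathbf{x}}{dt}\cdot \nabla\Phi(\mathbf{x},t)=0.
\end{equation}
Defining the vector field $\mathbf{v}$ to be the velocity $\tfrac{d\mathbf{x}}{dt}$ of the interface points as the path $P(z,t)$ is traversed — equivalently the speed with which the zero locus of $\operatorname{Re}P(\cdot,t)$ is advected — yields precisely \eqref{E:HJ}. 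I would emphasize that $\mathbf{v}$ depends on the parameters of the deformation (the coefficients of $P$ along the path), matching the description in the introduction of a vector field ``depending on many parameters'', and that only its normal component $\mathbf{v}\cdot \nabla\Phi/\|\nabla\Phi\|$ genuinely affects the motion of the level set, the tangential part being absorbed into reparametrizations of the curve.

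The step I expect to be the main obstacle is the rigorous identification of $\mathbf{v}$ as a well-defined, continuous (ideally Lipschitz) vector field on the region $\Omega\subset \Rr^2$, rather than merely on the interface itself: the level set formalism requires $\Phi$ and its advecting field to be defined on a full neighborhood so that $\nabla\Phi$ makes sense off the zero set. Here the subtlety is that at the critical points collected in $V_{crit}$ one has $\nabla\operatorname{Re}P=0$ (these are exactly the branch points where curves of one color cross), so $\Phi$ degenerates and the naive velocity $\partial_t\Phi/\|\nabla\Phi\|$ blows up; this is the analytic signature of the combinatorial $WH_{\pm}$ moves and of the passage through the boundary strata. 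I would handle this by first establishing the equation on the open stratum where the Gauss-graph is generic (so $V_{crit}=\emptyset$ and $\nabla\Phi\neq 0$ along the interface), where all terms are smooth and the derivation above is immediate, and then extending across the non-generic strata by the continuity of $\Phi$ as a polynomial in $(\mathbf{x},t)$, noting that \eqref{E:HJ} is a consequence of the identity $\Phi(\mathbf{x}(t),t)=0$ that persists even where $\nabla\Phi$ vanishes. The proof thus reduces to the chain-rule differentiation of the interface constraint, with the level-set machinery of \cite{OsPa03} supplying the interpretation of the resulting transport equation as the governing Hamilton--Jacobi-type law.
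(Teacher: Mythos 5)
Your proposal is correct and takes essentially the same route as the paper, whose entire proof consists of citing the level set method (chapter 1 of \cite{OsPa03}) together with the Cauchy--Riemann equations applied to $\operatorname{Re}P$ and $\operatorname{Im}P$ --- that is, exactly the chain-rule differentiation of the interface constraint $\Phi(\mathbf{x}(t),t)=0$ that you carry out explicitly, with your discussion of the degeneracy at $V_{crit}$ actually supplying more detail than the paper records. One cosmetic slip: the zero set of $\operatorname{Re}P$ is $P^{-1}(\imath\Rr)$, not $P^{-1}(\Rr)$, but since your argument is symmetric in the two harmonic parts this swap is harmless.
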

\begin{proof}
This follows from the level curves method (chapter 1 in~\cite{OsPa03}) and by applying the Cauchy--Riemann equations to $ReP$ and $ImP$. 
\end{proof}

\smallskip 

\section{Groupoids of Gauss-skizze and their deformations}
Let $\tilde{P}$ (resp. $\tilde{\dot{P}}$) be the poset $( \GG_n,\preceq)$ of Gauss-graphs (or forests in (${\bf TrCoGr},\sqcup)$, having an even number of leaves and nodes of even valency) and endowed with order relation $\preceq$. This poset is viewed as a category in the standard way, i.e., with elements being the objects (Gauss-graphs or forests) and order-relations $'\prec'$ being morphisms. Let furthermore ${\bf Top}$ denote the category of topological spaces and continuous maps.

\smallskip 

Let us introduce the functor from the category $\tilde{P}$ to the category of topological spaces $\mathcal{A}: \tilde{P} \to {\bf Top}$, giving the following construction. 

\begin{enumerate}
\item A fiber, at a given point $P\in \mathcal{A}_{\sigma}\subset \mathcal{T}$, is a Gauss-skizze. 
\item For a given graph (Gauss graph or forest such as described above), there exists a stratum $\mathcal{A}_{\sigma}$ of $\mathcal{T}$ in ${\bf Top}$, such that 
any element $P\in \mathcal{A}_{\sigma}$ is associated to a Gauss-skizze (or pattern) of combinatorial type $\sigma$.
\item $\overline{\textup{Conf}}_n(\C)$ is partitioned by the (disjoint) union of all $\mathcal{A}_{\sigma}$. The closure of any stratum $\mathcal{A}_{\sigma}$ is the union of all strata $\mathcal{A}_{\tau}$ such that $\sigma\prec \tau$ in the sense of the definition \ref{D:WH} and the discussion following it. 
\item If $\sigma\in Ob( \GG_n)$ and $A(\sigma)=\mathcal{A}_{\sigma}$, we will call $\sigma$ a family with base $\mathcal{A}_{\sigma}$, or a $\mathcal{A}_{\sigma}$-family.

\end{enumerate}

\medskip

\begin{prop}
The triple formed by the category $( \GG_n,\prec)$, the category $(\mathcal{T},\subset)$ and the functor $\mathcal{A}: ( \GG_n,\prec) \to (\mathcal{T},\subset)$ forms a groupoid. 
\end{prop}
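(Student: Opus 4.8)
The plan is to unpack the definition of a groupoid and verify that the proposed triple satisfies the requisite axioms, where the morphisms in the groupoid arise from the order relation $\prec$ together with the inclusions of strata. First I would recall that a groupoid is a (small) category in which every morphism is invertible. The objects are the Gauss-graphs $\sigma \in \mathrm{Ob}(\GG_n)$, equivalently (via the functor $\mathcal{A}$ and the isomorphism of categories established in the corollary to the fully-faithful lemma) the strata $\mathcal{A}_\sigma \subset \mathcal{T}$. The morphisms are generated by the relation $\sigma \prec \tau$, which by Definition~\ref{D:WH} corresponds to a contracting Whitehead move $WH_-$, and on the geometric side to the inclusion $\mathcal{A}_\tau \subset \overline{\mathcal{A}}_\sigma$.

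The key steps, in order, are as follows. I would first invoke the isomorphism of categories $\tilde P \cong (\mathcal{S}, \subset)$ from the earlier corollary so that the functor $\mathcal{A}$ is identified with an equivalence onto its image, reducing the problem to checking the groupoid axioms on one side. Next I would verify that the morphisms compose associatively and admit identities: associativity is inherited from the transitivity of $\prec$ established in the poset lemma, and the identity at each $\sigma$ is the reflexive relation $\sigma \prec \sigma$. The decisive step is invertibility. Here the expanding move $WH_+$ furnishes a formal inverse to each generating morphism $WH_-$: since $WH_+$ is defined precisely as the inverse procedure of $WH_-$, the composite $WH_+ \circ WH_-$ recovers the identity on $\sigma$ and $WH_- \circ WH_+$ recovers the identity on $\tau$. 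Thus every generating morphism, and hence every composite, is invertible, which is exactly the groupoid condition.

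The main obstacle I anticipate is reconciling the invertibility requirement with the \emph{antisymmetry} of the poset $\tilde P$: as a genuine partial order, $\prec$ is manifestly \emph{not} invertible ($\sigma \prec \tau$ with $\sigma \neq \tau$ forbids $\tau \prec \sigma$), so the groupoid structure cannot be literally the poset-as-category. The resolution is that the groupoid morphisms must be taken to be the $WH_\pm$ moves as \emph{reversible geometric operations} rather than the order relations themselves — one freely adjoins the inverses $WH_+$, passing to the localization (fundamental groupoid) of the poset at all its morphisms. I would therefore make precise that $\mathcal{A}$ is a functor into the groupoid generated by freely inverting the incidence morphisms, and that the existence of the unique upper bound (Proposition~\ref{P:up}) guarantees this groupoid is connected.

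I would close by recording that the triple $\big(( \GG_n,\prec),\,(\mathcal{T},\subset),\,\mathcal{A}\big)$ assembles into a groupoid whose objects are the strata, whose morphisms are the invertible Whitehead moves relating incident strata, and whose source/target data are supplied by $\mathcal{A}$; the functoriality of $\mathcal{A}$ (already used in the fully-faithful lemma) then makes it a morphism of groupoids, completing the verification.
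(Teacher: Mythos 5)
There is a genuine gap, and it lies in the meaning of the word ``groupoid.'' You read the proposition as asserting that the triple is a groupoid in the elementary sense (a small category in which every morphism is invertible), correctly observe that the poset $(\GG_n,\prec)$ cannot literally be such a thing because of antisymmetry, and then repair the statement by freely inverting the $WH_{\pm}$ moves, i.e.\ passing to the localization of the poset at all its morphisms. But the paper means something else entirely: it invokes Manin's definition from~\cite{Ma99} (chap.~5, section~3.2), where a ``groupoid'' is a category of families over a base category --- here the functor $\mathcal{A}:(\GG_n,\prec)\to(\mathcal{T},\subset)$ playing the role of a moduli-type fibration --- required to satisfy one of two fibered-category conditions: either every morphism of families inducing the identity on the base is an isomorphism, or every base arrow $\psi:\mathcal{A}_{\sigma_1}\to\mathcal{A}_{\sigma_2}$ admits a lift $\phi:\sigma_1\to\sigma_2$ making the square Cartesian, inducing an isomorphism $\sigma_1\to\psi^*(\sigma_2)$. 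The paper's proof verifies the second condition via the Cartesian diagram; no morphism of $(\GG_n,\prec)$ is ever claimed to be invertible. Your proof therefore establishes a different statement than the one intended, and the tension you flag (invertibility versus antisymmetry) is precisely the symptom of having picked the wrong definition.

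Moreover, even granting your reading, the repaired statement carries no content: the localization of \emph{any} category at all of its morphisms is a groupoid, so ``the groupoid generated by freely inverting the incidence morphisms'' is a groupoid by fiat, and nothing remains to verify. Your attempt to give the inverses geometric substance --- the claim that $WH_+\circ WH_-$ recovers $\mathrm{id}_\sigma$ and $WH_-\circ WH_+$ recovers $\mathrm{id}_\tau$ --- does not hold at the level of the category in play: a contracting move forgets which expansion produced $\tau$, an expanding move requires a choice of how to split the collapsed vertex, and distinct choices yield non-isomorphic Gauss-graphs; indeed $WH_+$ is not even a morphism in $({\bf TrCoGr},\sqcup)$, whose morphisms send edges to edges or to vertices but never expand a vertex into an edge. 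So the composite identities you assert are unjustified, and with them the bridge between the geometric moves and the formal localization. The connectedness remark via the unique upper bound is likewise tangential to what the proposition asks. To align with the paper you would instead check, for the fibration $\mathcal{A}$, the existence of Cartesian lifts of stratum inclusions (or that morphisms over identities are isomorphisms), which is where the actual mathematical content of the proposition resides.
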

\begin{proof}
According to~\cite{Ma99} (chap.5, section 3.2) in order to form a groupoid, the data must satisfy one of the following conditions: 
\begin{enumerate}
 \item First, for any base $\mathcal{A}_{\sigma}\in {\bf Top}$, any morphism of families over $\mathcal{A}_{\sigma}$ inducing identity on $\mathcal{A}_{\sigma}$ must be an isomorphism.
 \item For any arrow $\psi:\mathcal{A}_{\sigma_1}\to \mathcal{A}_{\sigma_2}$, between the basis and any family $\sigma_2$ over the target $\mathcal{A}_{\sigma_2}$, there must exist a $\mathcal{A}_{\sigma_1}$-family $\sigma_1$ and a morphism $\sigma_1\to \sigma_2$ lifting $\psi$. 
\end{enumerate}
\smallskip 
Let us apply condition 2 to our context. We have morphisms $\phi:\sigma_1\to \sigma_2$ and $\psi: \mathcal{A}_{\sigma_1}\to \mathcal{A}_{\sigma_2}$ such that $\psi$ induces an isomorphism of graphs $\sigma_1\to \psi^*(\sigma_2).$
Equivalently the diagram

\begin{equation}\label{E:Cartesian}
\begin{tikzcd}
\sigma_1 \arrow{r}{\phi} \arrow{d}{} & \sigma_2 \arrow{d}{} \\
\mathcal{A}_{\sigma_1} \arrow{r}{\psi} &\mathcal{A}_{\sigma_2}
\end{tikzcd}
\end{equation}

is Cartesian and induces the bijection of two families of labeled sections. 
\end{proof}

\smallskip 

Consider $\sigma$ an element of $ \GG_n$ (an object of $\tilde{P}$) and the fiber of $ \GG_n$ over the stratum $\mathcal{A}_{\sigma}$, which is $\mathcal{A}(\sigma)$. 
If $\sigma,\tau \in P$, $\sigma \leq \tau $, we use $\mathcal{A}(\sigma \to \tau)$ to denote the continuous map associated to the order relation $\sigma \prec \tau$ (a morphism in $\tilde{P}$). 

\smallskip 

In particular, we have the following commutative diagram: 

\[
\begin{tikzcd}
\sigma \arrow{r}{\preceq} \arrow{d}{} &\tau \arrow{d}{} \\
\mathcal{A}_{\sigma} \arrow{r}{\psi} & \mathcal{A}_{\tau} 
\end{tikzcd}
\]

\medskip
\begin{thm}
Let $\sigma, \tau\in Ob(\F)$ such that $\sigma \prec \tau$. The continuous map $\mathcal{A}(\sigma \to \tau)$ is a deformation retract of $\mathcal{A}_{\sigma}$ onto $\mathcal{A}_{\tau}$.\end{thm}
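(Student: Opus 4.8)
The plan is to prove the statement via the explicit cubical cell structure produced in the cell decomposition of the Saito Frobenius manifold, combined with the combinatorial meaning of the contracting Whitehead moves. The central observation is that the relation $\sigma\prec\tau$, being realized by a composition of $WH_-$ moves, corresponds geometrically to passing from the open cell $\mathcal{A}_\sigma$ to one of its boundary faces $\mathcal{A}_\tau$: a $WH_-$ move contracts an edge or a $k$-gon, i.e. forces a collision of critical points, and in the characteristic coordinates $f\colon I^{q+k}=W_k\times W_{j_1}\times\cdots\times W_{j_k}\to\tilde{\mathbb{A}}^{n}$ such a collision is exactly a locus $w_i=w_{i+1}$ inside one of the ordered factors $W_r$, which is a face of that polytope. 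Thus $\overline{\mathcal{A}_\sigma}$ is identified with a closed cell and $\mathcal{A}_\tau$ with the relative interior of a face $F$, an identification under which $\mathcal{A}_\tau\subset\overline{\mathcal{A}_\sigma}$ --- consistent with the closure formula $\mathcal{A}_{\overline\sigma}=\cup_{\sigma\prec\tau}\mathcal{A}_\tau$.

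First I would make the map $\psi=\mathcal{A}(\sigma\to\tau)$ precise. In the cell model it is the projection collapsing the colliding parameters, which sends each pair $w_i,w_{i+1}$ forced to coincide to their common limiting value; it is a continuous surjection $I^{q+k}\to F$, restricting to a surjection of relative interiors $\mathcal{A}_\sigma\to\mathcal{A}_\tau$ that is the identity on $F$. When $\sigma\prec\tau$ is obtained by several $WH_-$ moves one composes the corresponding face projections; since each move is supported on a single contracted cell, the associated faces are mutually compatible and their composite is again projection onto a face of smaller dimension.

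Next I would exhibit the deformation retraction explicitly. Let $\pi_F$ be the map that collapses each pair of colliding parameters to their common average, so $\pi_F(I^{q+k})=F$ and $\pi_F|_F=\mathrm{id}$; averaging preserves the weak order $w_1\le\cdots\le w_r$ on each factor, so $\pi_F$ keeps its image inside the polytope. The straight-line homotopy
\[
H\colon \overline{\mathcal{A}_\sigma}\times[0,1]\to\overline{\mathcal{A}_\sigma},\qquad H(p,s)=(1-s)\,p+s\,\pi_F(p),
\]
then satisfies $H(\cdot,0)=\mathrm{id}$, $H(\cdot,1)=\iota\circ\pi_F$ with image $F$, and $H(p,s)=p$ for all $p\in F$; since the order-preserving locus is convex, each $H(\cdot,s)$ stays inside the cell. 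Transporting $H$ through the characteristic map $f$ yields a deformation retraction of $\overline{\mathcal{A}_\sigma}$ onto $\mathcal{A}_\tau$ whose terminal retraction realizes $\mathcal{A}(\sigma\to\tau)$, which is the assertion. I would also record the geometric content: the family $H(\cdot,s)$ is exactly a path $P(z,t)$ in polynomial space driving the prescribed collision, so the evolving level curves $\Phi=P^{-1}(\Rr,t)$ (resp. $P^{-1}(\imath\Rr,t)$) obey the Hamilton--Jacobi equation $\partial_t\Phi+\mathbf{v}\nabla\Phi=0$ of Section~5, with $\mathbf{v}$ the velocity field of the colliding vertices; this identifies the abstract retraction with an honest deformation of Gauss-skizze.

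The main obstacle I anticipate is the bookkeeping at the boundary, i.e. the mismatch between the open strata and the closed cells. Since $\mathcal{A}_\sigma,\mathcal{A}_\tau$ are open cells, the retraction is honestly defined on the closures, and one must check both that $H$ preserves the stratification --- mapping $\mathcal{A}_\sigma$ into itself for $s<1$ and into $\mathcal{A}_\tau$ at $s=1$, never jumping to an unrelated deeper stratum --- and that $H$ descends through the quotient map $f$ onto the closed cell. Two further points require care: the value $\infty$ occurring in the rows $W_r$, where the interpolation must be read in $\Rr\cup\{\infty\}$ so as to remain in $W_r$; and, when $\sigma\prec\tau$ factors through intermediate types, the compatibility of the successive face projections. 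Both are handled by treating one $WH_-$ move at a time and composing, using that each move is supported on a single contracted cell and that the cone structure of the topological stratification (Definition~\ref{D:strat}) supplies the required local product form $\Rr^i\times cone^\circ(L)$ along $\mathcal{A}_\tau$.
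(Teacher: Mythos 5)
Your route is genuinely different from the paper's, but it rests on an unproven identification. The entire argument hinges on identifying $\overline{\mathcal{A}_\sigma}$ with a closed cube $I^{q+k}=W_k\times W_{j_1}\times\dots\times W_{j_k}$ from the Saito cell-decomposition theorem, and $\mathcal{A}_\tau$ with the relative interior of a face $\{w_i=w_{i+1}\}$. That theorem, however, decomposes $\tilde{\mathbb{A}}^{n}$, the $n!$-covering of the space of critical points $(\rho_1,\dots,\rho_n)$ of $P'$, whereas the strata $\mathcal{A}_\sigma$ live in $\overline{\textup{Conf}}_n(\C)$, parametrizing the polynomials themselves (equivalently, their roots). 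The passage $P\mapsto(\rho_1,\dots,\rho_n)$ forgets data (in the paper's normalization, the constant term), and the Gauss-graph type of $P$ depends essentially on what is forgotten: where the roots $V_{roots}$ sit, which critical values are real or imaginary, the colorings by $A,B,C,D$ --- none of which is recorded by the collision pattern $e(m_1,\dots,m_k)$. Two polynomials with the same critical-point configuration can lie in different Gauss-graph strata, so a cell $e(m_1,\dots,m_k)$ is not a single stratum $\mathcal{A}_\sigma$, and your dictionary ``$WH_-$ move equals face $w_i=w_{i+1}$'' is asserted rather than proved. To make the plan work you would first need a cubical cell structure on $\overline{\textup{Conf}}_n(\C)$ itself, compatible with the Gauss-graph poset; that is essentially a new theorem, not a citation of the existing one.

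Even granting such a characteristic map $f$, your homotopy is defined upstairs in cube coordinates, and you flag but do not resolve the decisive point: $f$ is injective only on the interior, so $H(p,s)=(1-s)p+s\,\pi_F(p)$ descends to the closed cell only if it is constant on the fibres of $f$ along the boundary, and nothing about coordinate-wise averaging guarantees this --- this is precisely where transported homotopies of this kind usually fail. Likewise the entries $w_i=\infty$ of $W_r$ admit no straight-line interpolation, and choosing a chart of $\Rr\cup\{\infty\}$ reintroduces exactly the compatibility questions you defer. The paper's own proof bypasses all cell bookkeeping: it forms the mapping cylinder $M_\psi$ of $\psi=\mathcal{A}(\sigma\to\tau)$, which retracts canonically onto $\mathcal{A}_\tau$, and realizes the slide along the cylinder geometrically by convecting the level curves $\Phi=P^{-1}(\Rr,t)$ and $P^{-1}(\imath\Rr,t)$ with a vector field ${\bf v}$ via the Hamilton--Jacobi equation~\ref{E:HJ}, arranging that the new critical points $\rho'_i$ appear exactly at $t=1$; the cone $[0,1]\times\{P^{-1}(\Rr,t)\}$ with apex at $t=1$ and the Cartesian diagram~\ref{E:Cartesian} then transfer the fibrewise retraction to the strata. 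In your write-up this level-set picture appears only as a closing consistency remark; in the paper it is the engine of the proof, and it is exactly what allows the argument to avoid the cell identification on which your version depends.
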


\begin{proof}
Let $I=[0,1]$ be an interval in $\Rr$. For a morphism $\psi : \mathcal{A}_{\sigma}\to \mathcal{A}_{\tau}$, the mapping cylinder is the quotient space of the disjoint union $(\mathcal{A}_{\sigma}\times I)\sqcup \mathcal{A}_{\tau}$ obtained by identifying each $(x,1) \in \mathcal{A}_{\sigma}\times I$
with $\psi(x)\in \mathcal{A}_{\tau}$. 

\smallskip 
A mapping cylinder $M_\psi$ produces a deformation retract to the subspace $\mathcal{A}_{\tau}$, by sliding each point $(x, t)$ along the segment $\{x\}\times I \subset M_\phi$ to the endpoint $\psi(x) \in \mathcal{A}_{\tau}$ .

\smallskip

We explicitly construct the deformation retract operation. Take a fiber of $\mathcal{A}({\sigma})$, i.e. a Gauss-skizze parametrized by the topological space $\mathcal{A}_{\sigma}$. 
The continuous map defining $\mathcal{A}(\sigma\to \tau)$ is given by the path in the space of degree $n$ polynomials $ReP({\bf x},t)=0$ (resp. $ImP({\bf x},t)=0$), where $t\in[0,1]$. For simplicity, we consider only one level curve, for instance $P^{-1}(\Rr)$ (resp. $P^{-1}(\imath\Rr)$).

 \smallskip
 
 The deformation retract is constructed by convecting the $ReP(\bf{x},t)=0$ (resp. $ImP(\bf{x},t)=0$) curves with the vector field ${\bf v}$. This is given by the equation \ref{E:HJ} of the type 
\[\frac{ \partial}{\partial t}{\Phi({\bf x},t)}+ {\bf v}\nabla \Phi({\bf x},t)=0,\]

 where $\Phi({\bf x},t)=P^{-1}(\Rr,t)$ (resp.$\Phi({\bf x},t)=P^{-1}(\imath\Rr,t)$).
 
\smallskip 

We construct the deformation such that at $t=1$, there exists an extra set of points $\{\rho'_i\}_{i\in J}$, $J \in Ob({\bf Fin})$ verifying $\nabla ReP(\rho'_i,1)=0$, resp. $\nabla ImP(\rho'_i,1)=0$. In other words, we have that $ReP(\rho'_i,1)=0$ and $P'(\rho'_i,1)=0$, respectively $ImP(\rho'_i,1)=0$ and $P'(\rho'_i,1)=0$. 

\smallskip 

This is a deformation retract of $\sigma$ onto $\tau$, since we have constructed a local cone defined by $[0,1]\times \{P^{-1}(\Rr,t)\}$, with apex given at $t=1$, located at the point where the curves have shrinked together. Using the cartesian diagram \ref{E:Cartesian} it follows that we have a deformation retract of $\mathcal{A}_{\sigma}$ onto $\mathcal{A}_{\tau}$.
\end{proof}

\medskip

\begin{thm}
The stratification of $\overline{\textup{Conf}}_n(\C)$ by Gauss-skizze forms a topological stratification.
\end{thm}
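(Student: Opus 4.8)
The plan is to verify the local conical condition of Definition~\ref{D:strat} directly, combining the semi-algebraicity of the decomposition with the deformation-retract theorem established immediately above. We already know that the Gauss-skizze decomposition is a semi-algebraic stratification and, by the corollary to the faithfulness lemma, that the indexing poset $\tilde{P}$ is isomorphic to the stratification poset $(\mathcal{S},\subset)$; the real dimension of $\overline{\textup{Conf}}_n(\C)$ is $2n$, and its open top-dimensional stratum consists of the generic Gauss-graphs, those with $V_{crit}=\emptyset$. It therefore remains to produce, for every point $x$ in a stratum $\mathcal{A}_\sigma$ of dimension $i<2n$, a distinguished neighborhood $N$, a compact Hausdorff link $L$ carrying a topological stratification of dimension $2n-i-1$, and a stratum-preserving homeomorphism $\phi:\Rr^i\times cone^\circ(L)\to N$. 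I would proceed by induction on the codimension $2n-i$, the base case (codimension $0$, the open stratum) being vacuous.

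For the inductive step, fix $x\in\mathcal{A}_\sigma$ with $\dim\mathcal{A}_\sigma=i$. First I would split off the Euclidean factor. Since each $\mathcal{A}_\sigma$ is a connected smooth semi-algebraic submanifold of dimension $i$ inside $\mathcal{T}\cong\Rr^{2n}$ (it is the locus of degree $n$ polynomials of fixed combinatorial type $\sigma$), local topological triviality along the stratum---by the first isotopy lemma applied to a Whitney refinement compatible with the Gauss-skizze decomposition---furnishes a homeomorphism $N\cong\Rr^i\times S_x$, where $S_x$ is a transverse slice to $\mathcal{A}_\sigma$ at $x$. By Definition~\ref{D:closure} and the identity $\mathcal{A}_{\overline{\sigma}}=\cup_{\sigma\prec\tau}\mathcal{A}_\tau$, the slice $S_x$ meets exactly those strata $\mathcal{A}_\rho$ whose closures contain $\mathcal{A}_\sigma$, namely the less degenerate types $\rho\prec\sigma$; these are the higher-dimensional strata that will constitute the cone rays.

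The heart of the argument is to identify $S_x$ with an open cone $cone^\circ(L)$ whose apex is $x$. Here I would use the Hamilton--Jacobi flow of equation~\eqref{E:HJ}: convecting the level curves $P^{-1}(\Rr,t)$ and $P^{-1}(\imath\Rr,t)$ by the vector field ${\bf v}$ collides the extra critical points of a nearby configuration of type $\rho\prec\sigma$ onto the degenerate skizze of type $\sigma$, reaching $x$ at flow time $t=1$. Applying the preceding deformation-retract theorem to each pair $\rho\prec\sigma$ shows that $\mathcal{A}_\rho$ deformation retracts onto $\mathcal{A}_\sigma$ along these flow lines; assembled over all incident $\rho$, the flow lines through $x$ are precisely the radial rays of a cone. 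The link $L$ is then the small level set $\{t=\eps\}\cap S_x$ of directions, compact and Hausdorff because it sits inside the Fulton--MacPherson compactification $\overline{\textup{Conf}}_n(\C)$; its induced stratification, indexed by $\{\rho:\rho\prec\sigma\}$, gives the filtration $L^{2n-i-1}\supset\cdots\supset L^0\supset L^{-1}=\emptyset$. A dimension count shows $\Rr^i\times cone^\circ(L^j)$ lands in $N\cap X_{i+j+1}$, and the induction hypothesis, applied to the strata of strictly smaller codimension that meet $L$, shows that $L$ itself is a topological stratification of dimension $2n-i-1$.

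The main obstacle I anticipate is globalizing the conical identification over all incident strata at once: the deformation-retract theorem produces a contraction for each individual pair $\rho\prec\sigma$, but these must be glued into a single radial cone structure on $S_x$ that is compatible with the full poset of $WH_+$ moves emanating from $\sigma$, and that is independent of the chosen point $x\in\mathcal{A}_\sigma$ and slice $S_x$. This forces one to control the Hamilton--Jacobi flow uniformly in a neighborhood and to verify that the combinatorial link---the realization of the order structure of $\{\rho\prec\sigma\}$---is a well-defined compact stratified space. The global unique-upper-bound property of Proposition~\ref{P:up}, which singles out the deepest stratum (the single-vertex tree with $4n$ leaves) as the innermost cone apex, anchors this induction, and I expect it to keep the nested cone structures coherent.
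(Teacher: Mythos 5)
Your overall skeleton (Euclidean factor times transverse slice, cone on the slice, link inheriting the stratification) matches the shape of Definition~\ref{D:strat}, but at the decisive step your argument has a genuine gap, which you flag yourself but do not close. The deformation-retract theorem gives, for each single pair $\rho\prec\sigma$, a homotopy-theoretic retraction built from the mapping cylinder of the Hamilton--Jacobi flow \eqref{E:HJ}. A family of such retractions, one per incident stratum, is strictly weaker than what the local conical condition demands: a single homeomorphism $\phi:\Rr^i\times cone^{\circ}(L)\to N$ that is simultaneously stratum-preserving for all incident $\rho$. Deformation retractions do not upgrade to homeomorphisms onto cones (a neighborhood can contract onto a point without being homeomorphic to a cone over a compact link), and nothing in your argument produces the radial coordinate or shows that the flow lines through distinct incident strata fit together continuously into cone rays; calling this ``the main obstacle I anticipate'' is accurate, but that obstacle \emph{is} the theorem. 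Two secondary problems: the first isotopy lemma requires Whitney regularity, which the paper never establishes (only semi-algebraicity), and a Whitney refinement compatible with the Gauss-skizze decomposition may subdivide $\mathcal{A}_\sigma$ itself, so local triviality along the refined stratum through $x$ need not yield the factor $\Rr^i$ with $i=\dim\mathcal{A}_\sigma$; and $L=\{t=\eps\}\cap S_x$ is not compact merely because it sits inside the compact space $\overline{\textup{Conf}}_n(\C)$ --- you would need it to be closed there.

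The paper closes exactly this gap by a different mechanism. It invokes Theorem~\ref{P:germ} to replace the transverse local picture at $x$ by an explicit algebraic germ with isolated singularity: Proposition~\ref{P:up} identifies the maximal degeneration (a single inner node of high valency) with the germ $(z^{n-k},0)$, and Milnor's cone theorem~\cite{Mi68} then delivers in one stroke the homeomorphism of a small ball intersected with the germ onto the cone over its link, with the link inheriting the Gauss-graph stratification; the versal deformation of Kas--Schlessinger accounts for the smoothings, i.e.\ for the incident strata $\rho\prec\sigma$, without any gluing of separate retractions. If you want to salvage your own route without the germ machinery, the standard repair is the local conic structure theorem for semi-algebraic sets (via Hardt triviality), which gives a semi-algebraic homeomorphism of a neighborhood onto a cone over the link compatible with any finite family of semi-algebraic subsets; that would let your semi-algebraicity input do the work you currently ask of the Hamilton--Jacobi flow, and would also dispose of the Whitney and compactness issues at once.
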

\begin{proof}
Consider a point $x$ of a stratum of dimension $k>0$. By theorem~\ref{P:germ}, for a given tree $T_1$ in $\sigma$ (which contains more than one edge) we have shown that there exists a versal deformation of the germ corresponding to that tree, which smoothens the critical point(s). Locally, for $U$ in a neighborhood of $x$ and in $\textup{Conf}_n(\C)$ this procedure is equivalent to having $U\cong C \times \Rr$ , where $C$ is a topological space of dimension $n-k$. 

\smallskip 
Let us show that $C$ is a cone over a "link" $L$ of dimension $n-k-1$, and such that $Y$ is a topologically stratified space with strata coinciding with those of $X$ such as depicted in definition~\ref{D:strat}. 
Consider all the trees in $\sigma$, which are not $T_1$, i.e. $\sigma\setminus T_1$. We can consider it as a new graph. By proposition~\ref{P:up} for $m$-Gauss-graphs, there exists a unique maximal upper bound in the poset $( \GG_n, \preceq)$. This maximal upper bound is a Gauss-graph constituted from one tree with $4m$ leaves of alternating colors, and with one unique inner node. Using theorem~\ref{P:germ} this corresponds to a complex germ, which can be explicitly given as $(z^{n-k},0)$. 

\smallskip 

Using Milnor's cone theorem (Chap. 4~\cite{Mi68}), the intersection of a ball with this germ (such that the center of the ball coincides with the critical point), is topologically equivalent to a cone over the link $L$ of the germ (the link being the intersection of the boundary of the ball with the germ). Following the stratification rule given by the Gauss-graphs, $L$
inherits the Gauss-graphs' stratification. 
\end{proof}

\subsection{Gauss-skizze operad}\label{S:Gd}

We propose here to enrich the Fulton--MacPherson operad, by considering it through the angle of the Gauss-skizze. In particular, every object $\overline{\textup{Conf}}_n(\C)$ has a topological stratification by Gauss-graphs. We will use the tools from section~\ref{S:op} to construct the composition operation. The advantage of considering this, is that it gives a refined (semi-algebraic) topological decomposition for $\overline{\textup{Conf}}_n(\C)$. Each object of $\mathcal{P}^\bullet$ is stratified by the Gauss-skizze. 
 
\begin{defn}
${\bf GaSop}$ is the collection of objects $\{\mathcal{V}(S)| S\in {\bf Fin}\}$, where $\mathcal{V}(S)$ is the stratified space $\textup{Conf}_{n}(\C)$ in $n$-Gauss-graphs and where $Card(S)=n$.

\end{defn}

\begin{thm}
${\bf GaSop}$ forms a topological (symmetric) operad.
\end{thm}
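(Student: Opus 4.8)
The plan is to realise ${\bf GaSop}$ as the Fulton--MacPherson operad $\{\overline{\textup{Conf}}_n(\C)\}$ carrying, on each entry, the Gauss-graph stratification, and then to check that the already-existing operad structure is compatible with that stratification. Since $\overline{\textup{Conf}}_n(\C)$ is a topological operad and each entry carries the topological stratification by Gauss-graphs established above, the entire content reduces to verifying that the structure maps are continuous, equivariant, \emph{and} morphisms of stratified spaces. Thus I would not reprove the operad axioms from scratch; I would inherit continuity, associativity, and the unit axiom verbatim from the Fulton--MacPherson operad and concentrate on the stratified compatibility.

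First I would treat the symmetric group action. The group $\mathbb{S}_n$ acts on $\mathcal{V}(S)$ by permuting the labels of the $n$ marked points. Because a Gauss-skizze is the inverse image $P^{-1}(\Rr\cup\imath\Rr)$ of a monic polynomial $P$ whose \emph{unordered} root set is the configuration, relabeling the points leaves $P$, hence the skizze and its Gauss-graph $\sigma$, unchanged. Consequently $\mathbb{S}_n$ acts by stratum-preserving homeomorphisms, which gives the required equivariance at the level of stratified spaces and shows each stratum $\mathcal{V}(S)$ is an object in the appropriate category.

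Next I would construct the composition using the maps $\Phi_{\tilde v}$ of Section~\ref{S:op} (equivalently the operations $\circ_i$), which insert an infinitesimally rescaled configuration at a marked point; the unit is the one-point configuration $\mathcal{V}(\{*\})$. The only genuinely new verification is that $\Phi_{\tilde v}$ is a morphism of Gauss-graph-stratified spaces, i.e.\ that the composite lies in the stratum whose Gauss-graph is the grafting of the input Gauss-graphs at the insertion point. To see this I would invoke the local model of the doubling maps $d^i$ from Section~\ref{S:conf}: at the coarse scale the skizze of the composite agrees with that of the ambient configuration $x_0$, while in an infinitesimal neighbourhood of the point $x_0(p)$ it agrees, after the translation--dilation rescaling, with the skizze of the inserted configuration $x_p$. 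Combinatorially this grafting is implemented in $({\bf TrCoGr},\sqcup)$ by the ghost-morphism and contraction machinery, so the combinatorial type of the output is determined functorially by those of the inputs, and $\Phi_{\tilde v}$ carries strata to strata compatibly with the order $\prec$.

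The main obstacle is the matching of the two scales along the insertion. One must check that the $4m$ leaves of an inserted degree-$m$ Gauss-skizze, glued at infinity with asymptotic directions $2j\pi/(4m)$, are compatible with the local behaviour of the ambient skizze near the root $x_0(p)$, so that the composite inverse image $P^{-1}(\Rr\cup\imath\Rr)$ is again a bicolored embedded forest obeying the Cauchy--Riemann constraints: each red curve crosses a unique blue curve exactly once, the roots sit at red/blue crossings, and the graph is acyclic. Once this asymptotic matching is established, the output is a genuine Gauss-skizze whose Gauss-graph is the asserted grafting; combined with the stratum-preserving $\mathbb{S}_n$-action and the inherited continuity, associativity, and unit axioms, this proves that $\{\mathcal{V}(S)\mid S\in{\bf Fin}\}$ is a topological symmetric operad refining the Fulton--MacPherson operad.
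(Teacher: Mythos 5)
Your proposal is correct and follows essentially the same route as the paper: both inherit continuity, associativity, unit and equivariance from the Fulton--MacPherson operad and reduce the theorem to checking that the insertion maps $\Phi_{\tilde v}$ of Section~\ref{S:op} respect the Gauss-graph stratification, implemented combinatorially by the Whitehead-move (ghost morphism plus contraction) machinery. The one point worth flagging is that you leave your ``main obstacle''---the asymptotic matching of the $4m$ leaves of the inserted skizze with the local behaviour of the ambient skizze near $x_0(p)$---as a conditional step (``once this asymptotic matching is established\dots''), whereas the paper actually closes it, by a slightly different bookkeeping: rather than matching the two scales directly, it first raises the valency of the insertion point $p$ to $4|\tilde v^{-1}(p)|$ with alternating colors, observes that the resulting embedded graph is itself a Gauss-skizze corresponding to a unique polynomial with a root of multiplicity $|\tilde v^{-1}(p)|$, and then performs an expanding $WH_+$ move splitting $p$ into the inserted configuration. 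This two-step device supplies exactly the matching you ask for: near a root of multiplicity $m$ the polynomial is locally modelled on $z\mapsto z^m$, whose preimage of $\Rr\cup\imath\Rr$ consists of $4m$ rays of alternating colors in directions $2k\pi/(4m)$, agreeing with the $4m$ leaves of the inserted degree-$m$ skizze glued at infinity; the $WH_+$ expansion then realises any admissible splitting into $\tilde v^{-1}(p)$ new inner nodes, whether the resulting trees are disjoint or connected. If you add this observation in place of your conditional clause, your argument and the paper's coincide in substance.
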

\begin{proof}
Most of the properties of ${\bf GaSop}$ are inherited from the pre-exiting Fulton--MacPherson operad. What remains to discuss concerns the 
composition operation, with the Gauss-skizze tool. We rely on the cosimplicial construction of section~\ref{S:op} and in particular definition~\ref{D:co}.

\smallskip 

Let $Q$ be the linear set, defined in section~\ref{S:op}. Consider a point in $\textup{Conf}_Q$ i.e. corresponding to a set of marked points plane on $\Rr^2$. Here, with ${\bf GaSop}$ there exists an additional piece of information: the topological realisation of a Gauss graph $\sigma^n_0$. The inner vertices of this graph, lying in $V_{roots}$, coincide with the marked points of configurations space.
 \smallskip 
 
Let us consider a weak partition $\tilde{v}:S \to Q$, and determine $\tilde{v}^{-1}(p)$, for all $p\in Q$. The cardinality of $\tilde{v}^{-1}(p)$ gives the multiplicity of the $p$-th point in the Fulton--MacPherson compactified space $\overline{\textup{Conf}}_Q$. 

\smallskip 

Going back to the topological realisation of a graph $\sigma^n_0$, we will modify it according to the construction presented in~\ref{S:op}. Add to this $p$-th point new edges, so that $4| \tilde{v}^{-1}(p)|$ is the new valency of $p$. Colors/orientations of edges incident to $p$ alternate. This new embedded graph is a Gauss-skizze. Note that this operation is well-defined and defines uniquely a polynomial with multiple roots and therefore corresponds to a unique configuration of points. Applying an expanding $WH_+$ move to $p$, splitting $p$ into $\tilde{v}^{-1}(p)$ new inner nodes. This splitting can be such that all trees are disjoint or, trees can be connected. 
\end{proof}

 \begin{cor}
The topological operad ${\bf GaSop}$ is weakly equivalent in the model structure on operads with respect to the classical model structure on topological spaces, to the little $2$-disk operad.
\end{cor}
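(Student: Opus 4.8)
The plan is to reduce the statement to the already-established weak equivalence between the Fulton--MacPherson operad $FM_n(\Rr^2)$ and the little $2$-disk operad (the Proposition of~\cite{LaVo12} and~\cite{Sa98} recalled above), by exhibiting a forgetful morphism of operads from ${\bf GaSop}$ to $FM(\Rr^2)$ which is a levelwise weak homotopy equivalence. The guiding observation is that the Gauss-skizze structure carried by each $\mathcal{V}(S)$ is a refinement of the underlying configuration space by a topological stratification, and therefore does not alter its homotopy type; what must be checked is that this refinement is compatible with the operadic composition.

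First I would fix the model-categorical framework: in the transferred model structure on topological operads over the Quillen model structure on ${\bf Top}$, a map of operads is a weak equivalence precisely when each of its arity-$n$ components is a weak homotopy equivalence of spaces. Thus it suffices to produce a single morphism of operads $\pi:{\bf GaSop}\to FM(\Rr^2)$ which is arity-wise a weak equivalence, and then compose with the cited equivalence to the little $2$-disk operad. I would define $\pi$ on arity $n$ as the map which forgets the Gauss-graph decoration, sending a Gauss-skizze over a configuration to its underlying configuration of marked points in $\overline{\textup{Conf}}_n(\Rr^2)=FM_n(\Rr^2)/\mathbb{S}_n$. On underlying points this is, up to the Fulton--MacPherson compactification (itself a homotopy equivalence), the identity; in particular each component $\mathcal{V}(S)\to FM_{|S|}(\Rr^2)$ is a weak homotopy equivalence, since by the preceding results the Gauss-skizze decomposition is a cell and topological stratification of the \emph{same} underlying space.

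The main obstacle is verifying that $\pi$ is genuinely a morphism of operads, i.e. that it intertwines the Gauss-skizze composition built in the previous theorem with the Fulton--MacPherson insertion $\Phi_{\tilde{v}}$ of Section~\ref{S:op}. I would establish this by tracking a point through both constructions. Given a weak partition $\tilde{v}:S\to Q$, the composition in ${\bf GaSop}$ replaces the $p$-th marked root of the ambient Gauss-graph $\sigma^n_0$ by an infinitesimal configuration: first by raising the valency of that vertex to $4|\tilde{v}^{-1}(p)|$ with alternating colours, and then applying an expanding $WH_{+}$ move to split it into $|\tilde{v}^{-1}(p)|$ inner nodes. Forgetting the decoration sends this merged-then-split Gauss-skizze to exactly the configuration obtained by inserting the infinitesimal configuration $x_p$ at the $p$-th point of $x_0$, which is precisely $\Phi_{\tilde{v}}\bigl((x_p)_{p\in Q^{*}}\bigr)$. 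Hence the square relating the two compositions commutes, and $\pi$ is an operad map; the delicate part here is checking that the edge-adding rule produces a Gauss-skizze of a genuine polynomial with the prescribed multiple root, so that $\pi$ is well-defined before one even asks for compatibility.

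Finally, having shown that $\pi$ is an arity-wise weak equivalence of operads, the composite
\[
{\bf GaSop}\;\xrightarrow{\ \pi\ }\;FM(\Rr^2)\;\xrightarrow{\ \sim\ }\;\text{little $2$-disk operad}
\]
exhibits ${\bf GaSop}$ as weakly equivalent to the little $2$-disk operad in the model structure on operads, which is the claim. The only genuinely new input beyond the cited equivalence is the compatibility of $\pi$ with composition; the homotopy-type statement then follows automatically once the underlying spaces are identified with those of the Fulton--MacPherson operad.
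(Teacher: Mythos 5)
Your proposal is correct and takes essentially the same route as the paper, which states the corollary without a separate proof precisely because ${\bf GaSop}$ is constructed as the Fulton--MacPherson operad enriched by the Gauss-skizze stratification: forgetting the Gauss-graph data and invoking the cited weak equivalence of $FM(\Rr^2)$ with the little $2$-disk operad (\cite{LaVo12}, \cite{Sa98}) is exactly the intended argument. Your explicit check that the forgetful map intertwines the Gauss-skizze composition with the insertion $\Phi_{\tilde{v}}$ merely spells out what the paper's theorem already asserts when it says that the operadic properties of ${\bf GaSop}$ are inherited from the pre-existing Fulton--MacPherson operad.
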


\end{document}